\newtheorem{theorem}{Theorem}[section]
\newtheorem{lemma}[theorem]{Lemma}
\newtheorem{corollary}[theorem]{Corollary}
\newtheorem{remark}[theorem]{Remark}
\newtheorem{proposition}[theorem]{Proposition}
\newtheorem{definition}[theorem]{Definition}
\newtheorem{example}[theorem]{Example}
\newproof{proof}{Proof}
\numberwithin{equation}{section}
\numberwithin{theorem}{section}
\newcommand{\e}{\varepsilon}
\newcommand{\w}{\omega}
\newcommand{\NN}{\mathbb{N}}
\newcommand{\IR}{\mathbb{R}}
\newcommand{\IF}{\mathbb{F}}
\newcommand{\xxx}{\mathbf{x}}
\newcommand{\TTT}{\mathcal{T}}
\newcommand{\FF}{\mathcal{F}}
\newcommand{\BB}{\mathcal{B}}
\newcommand{\Nn}{\mathcal{N}}
\newcommand{\AAA}{\mathcal{A}}
\newcommand{\LL}{\mathcal{L}}
\newcommand{\supp}{\mathrm{supp}}
\newcommand{\cl}{\mathrm{cl}}
\newcommand{\Ra}{\Rightarrow}
\newcommand{\cacx}{\overline{\mathrm{acx}}}
\newcommand{\Bo}{\mathsf{Bo}}
\newcommand{\Id}{\mathsf{id}}
\newcommand{\ind}{\mathsf{ind}}
\newcommand{\spn}{\mathsf{span}}
\newcommand{\cspn}{\overline{\mathsf{span}}}
\newcommand{\CC}{C_k}
\newcommand{\SI}{\underrightarrow{\mbox{ s-$\mathsf{ind}$}}_n\,}
\newcommand{\SM}{{\setminus}}
\begin{document}

\begin{frontmatter}

\title{Dunford--Pettis type  properties of locally convex spaces}

\author{S.~Gabriyelyan}
\ead{saak@math.bgu.ac.il}
\address{Department of Mathematics, Ben-Gurion University of the Negev, Beer-Sheva, P.O. 653, Israel}

\begin{abstract}
In 1953, Grothendieck introduced and studied the  Dunford--Pettis property (the $DP$ property) and the strict  Dunford--Pettis property (the strict $DP$ property). The $DP$ property of order $p\in[1,\infty]$ for Banach spaces was introduced by Castillo and Sanchez in 1993. Being motivated by these notions, for $p,q\in[1,\infty]$,  we define the strict Dunford--Pettis property of order $p$ (the strict $DP_p$ property) and the  sequential Dunford--Pettis property of order $(p,q)$ (the sequential $DP_{(p,q)}$ property). We show that a locally convex space (lcs) $E$ has the $DP$ property iff the space $E$ endowed with the Grothendieck topology $\tau_{\Sigma'}$ has the weak Glicksberg property, and  $E$ has the strict $DP_p$ property iff the space $(E,\tau_{\Sigma'}) $ has the $p$-Schur property. We also characterize lcs with the sequential $DP_{(p,q)}$ property. Some permanent properties and relationships between Dunford--Pettis type properties are studied. Numerous (counter)examples are given. In particular, we give the first example of an lcs with the strict $DP$ property but without the $DP$ property and show that the completion of even normed spaces with the $DP$ property may not have the $DP$ property.
\end{abstract}

\begin{keyword}
 Dunford--Pettis property \sep strict Dunford--Pettis property of order $p$ \sep sequential Dunford--Pettis property of order $(p,q)$ \sep weak Glicksberg property

\MSC[2020] 46A03 \sep 46E10

\end{keyword}

\end{frontmatter}


\section{Introduction}


One of the most important properties of Banach spaces is the property of being a Dunford--Pettis space introduced by Grothendieck in \cite{Grothen}.

\begin{definition}[\cite{Grothen}] \label{def:DP-Banach}{\em
A Banach space $X$ is said to have the {\em Dunford--Pettis property} ({\em $DP$ property}) if every weakly compact operator $T$ from $X$ to any Banach space $Y$ is completely continuous (i.e., $T$ sends weakly convergent sequences from X to norm-convergent sequences in Y). \qed}
\end{definition}
It is well known that all Banach $C(K)$-spaces and $L_1$-spaces have the $DP$ property.

In \cite{Grothen}, Grothendieck defined also the Dunford--Pettis property and the strict Dunford--Pettis property in the realm of all locally convex spaces. For more details and historical remarks we refer the reader to  Section 9.4 of \cite{Edwards}.

\begin{definition}[\cite{Grothen}] \label{def:DP}{\em
A locally convex space $E$ is said to have
\begin{enumerate}
\item[$\bullet$] the {\em Dunford--Pettis  property} ($DP$ {\em property}) if each operator from $E$ into a Banach space $L$, which transforms bounded sets into relatively weakly compact sets, transforms each absolutely convex weakly compact set into a relatively compact subset of $L$;
\item[$\bullet$] the {\em strict Dunford--Pettis  property} ($SDP$ {\em property}) if each operator from $E$ into a Banach space $L$, which transforms bounded sets into relatively weakly compact sets, transforms each weakly Cauchy sequence in $E$  into a convergent (or, equivalently, Cauchy) sequence in $L$. \qed
\end{enumerate}  }
\end{definition}

Let $E$ be a locally convex space  (lcs for short), and let $E'_\beta$ be the topological dual $E'$ of $E$ endowed with the strong topology $\beta(E',E)$. Denote by $\Sigma'(E')$ the family of all absolutely convex, equicontinuous, weakly compact subsets of $E'_\beta$, and let $\tau_{\Sigma'}$ be the topology on $E$ of uniform convergence on the elements of $\Sigma'(E')$. In what follows we shall call the topology $\tau_{\Sigma'}$ by the {\em Grothendieck topology}. The next characterizations of the $DP$ property and the strict $DP$ property are proved by Grothendieck in Propositions 1 and 1 bis of \cite{Grothen} (see also Theorem 9.3.4 and page 633 of \cite{Edwards}).
\begin{theorem}[\cite{Grothen}] \label{t:DP}
Let $E$ be a locally convex space. Then:
\begin{enumerate}
\item[{\rm(i)}]   $E$ has the $DP$ property if and only if every absolutely convex, weakly compact subset of $E$ is precompact for the Grothendieck topology $\tau_{\Sigma'}$;
\item[{\rm(ii)}]   $E$ has the strict $DP$ property if and only if each weakly Cauchy sequence in $E$ is Cauchy for the Grothendieck topology $\tau_{\Sigma'}$.
\end{enumerate}
\end{theorem}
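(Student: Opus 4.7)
My plan is to establish first a duality bridge that underpins both parts of the theorem: a continuous linear operator $T\colon E\to L$ into a Banach space $L$ transforms bounded subsets of $E$ into relatively weakly compact subsets of $L$ if and only if $A_T:=\cacx\bigl(T^\ast(B_{L'})\bigr)$ belongs to the family $\Sigma'(E')$. Equicontinuity of $T^\ast(B_{L'})$ is automatic from continuity of $T$ by polarity between neighborhoods of $0$ in $E$ and equicontinuous sets in $E'$. The nontrivial half is the Gantmacher-type equivalence between ``$T$ sends bounded sets to relatively weakly compact sets'' and ``$T^\ast(B_{L'})$ is relatively $\sigma(E',E'')$-compact in $E'_\beta$''; passing to the absolutely convex closed hull preserves weak compactness by a Krein-type argument, so $A_T\in\Sigma'(E')$, and conversely every $A\in\Sigma'(E')$ arises in this way from the evaluation map below.

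With the bridge in hand, part~(i) splits into two symmetric halves. For the forward direction, assume $E$ has the $DP$ property and let $K\subset E$ be absolutely convex and weakly compact. Fix $A\in\Sigma'(E')$; since $A$ is equicontinuous and weakly closed in $E'_\beta$, it is compact in $\sigma(E',E)$. Define the evaluation map $T_A\colon E\to C(A)$ by $T_A(x)(f)=f(x)$, which is continuous because its induced sup norm coincides with the continuous seminorm $p_A(x):=\sup_{f\in A}|f(x)|$. The bridge shows $T_A$ is Dunford--Pettis --- indeed $A_{T_A}$ coincides with $A$ because $A$ is already absolutely convex and weak-$\ast$ closed --- so the $DP$ hypothesis forces $T_A(K)$ to be relatively norm-compact in $C(A)$, which is exactly $p_A$-precompactness of $K$. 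As $A\in\Sigma'(E')$ was arbitrary, $K$ is $\tau_{\Sigma'}$-precompact. Conversely, given any Dunford--Pettis operator $T\colon E\to L$ and an absolutely convex weakly compact $K\subset E$, the bridge places $A_T\in\Sigma'(E')$, and Hahn--Banach yields $\|T(x)\|_L=p_{A_T}(x)$; thus $\tau_{\Sigma'}$-precompactness of $K$ transfers directly to precompactness of $T(K)$ in $L$, which by completeness of $L$ is the same as relative compactness.

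Part~(ii) follows the same template with ``absolutely convex weakly compact set'' replaced by ``weakly Cauchy sequence'' and ``(relative) compactness'' by ``Cauchyness''. The key identity is again $\|T(x_n)-T(x_m)\|_L=p_{A_T}(x_n-x_m)$, which makes Cauchyness in the two pictures equivalent; and for the forward direction one uses that $(T_A(x_n))$ being Cauchy in $C(A)$ says precisely $p_A(x_n-x_m)\to 0$. The principal obstacle throughout is the bridge lemma, specifically the implication that a Dunford--Pettis operator $T$ produces a \emph{weakly compact} (not merely equicontinuous) image $T^\ast(B_{L'})$ in $E'_\beta$. Since $E$ need not be Banach or even complete, Gantmacher's theorem cannot be quoted directly; the cleanest route is to exploit that $L$ is Banach (so $L''$ behaves nicely) and to read off weak compactness of $T^\ast(B_{L'})$ from the inclusion $T^{\ast\ast}(E'')\subset L$, closing under absolutely convex hulls via a Krein-type theorem at the end.
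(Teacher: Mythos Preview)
Your approach is correct and is essentially the classical Grothendieck--Edwards argument that the paper cites for this theorem (the paper does not give its own proof of Theorem~\ref{t:DP}, but refers to \cite{Grothen} and \cite[Theorem~9.3.4]{Edwards}, reproducing the latter's machinery in its appendix as Theorems~\ref{t:G-dual-top-1}--\ref{t:G-dual-top-3}). Both arguments rest on the same Gantmacher-type equivalence you call the ``bridge'': $T$ sends bounded sets to relatively weakly compact sets if and only if $T^\ast(B_{L'})$ is relatively $\sigma(E',E'')$-compact (proved via $T^{\ast\ast}(E'')\subset L$ and Grothendieck's completeness theorem, exactly as you sketch). The difference is packaging: you run the argument concretely through the single evaluation map $T_A\colon E\to C(A)$ and the identity $\|T(x)\|_L=p_{A_T}(x)$, whereas the Edwards framework embeds $(E,\tau_{\Sigma'})$ into a product of Banach spaces and argues coordinatewise. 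Your version is more transparent for a reader who has not seen Edwards' abstract $\mathfrak{S}$-topology theorems; the abstract version has the advantage of proving a template (Theorem~9.3.4) that the paper then reuses for the strict $DP_p$ property and other variants. Two small cleanups: $T^\ast(B_{L'})$ is already absolutely convex, so no Krein-type argument is needed for $A_T$; and your claim $A_{T_A}=A$ requires the (easy) barycenter observation that $T_A^\ast(\mu)=\int_A f\,d\mu(f)\in A$ for $\mu\in B_{M(A)}$, which you should make explicit.
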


In Proposition~2 of \cite{Grothen}, Grothendieck proved that a Banach space $E$ has the $DP$ property if and only if for every weakly null sequences $\{ x_n\}_{n\in\w}$ and $\{ \chi_n\}_{n\in\w}$ in $E$ and  the Banach dual $E'$ of $E$, respectively, it follows that $\lim_{n\to \infty} \langle\chi_n,x_n\rangle=0$. (He used this result to show that every Banach space $C(K)$ has the $DP$ property, see Th\'{e}or\`{e}me~1 of \cite{Grothen}.)
Albanese, Bonet and Ricker (see Corollary~3.4 of \cite{ABR}) generalized  Grothendieck's result by  proving  that if $E$ is a Fr\'{e}chet space (or even  a  strict $(LF)$-space), then $E$ has the $DP$ property if and only if $\lim_{n\to \infty} \langle\chi_n,x_n\rangle=0$ for any weakly null sequences $\{ x_n\}_{n\in\w}$ and $\{ \chi_n\}_{n\in\w}$ in $E$ and $E'_\beta$, respectively.
These results motivate us to introduce in \cite{Gabr-free-resp}  the following  ``sequential'' version of the $DP$ property in the class of all  locally convex spaces.

\begin{definition}[\cite{Gabr-free-resp}] \label{def:sDP}{\em
A locally convex space $E$ is said to have  the {\em sequential Dunford--Pettis property} ($sDP$ {\em property})  if for every weakly null sequences $\{ x_n\}_{n\in\w}$ and $\{ \chi_n\}_{n\in\w}$ in $E$ and  the strong dual $E'_\beta$ of $E$, respectively, it follows that  $\lim_{n\to \infty} \langle\chi_n,x_n\rangle=0$.\qed}
\end{definition}
Then Proposition~3.3 of \cite{ABR} can be formulated as follows: every barrelled quasi-complete space with the  $DP$ property has also the sequential $DP$ property. However, there are locally convex spaces with the $DP$ property but without the sequential $DP$ property, see Theorem 1.7 of \cite{Gabr-free-lcs}.

Let $X$ be a Tychonoff space. Denote by $C_p(X)$ and $\CC(X)$ the space $C(X)$ of continuous functions on $X$ endowed with the pointwise topology and the compact-open topology, respectively. Another important class of function spaces on $X$ widely studied in General Topology and Functional Analysis is the class $B_\alpha(X)$ of Baire-$\alpha$ functions. For $\alpha=0$, we put $B_0(X):=C_p(X)$. For every nonzero countable ordinal $\alpha$, let $B_\alpha(X)$ be the family of all functions $f:X\to \IF$ that are pointwise limits of sequences $\{f_n\}_{n\in\w}\subseteq \bigcup_{\beta<\alpha}B_\beta(X)$. All the spaces $B_\alpha(X)$ are endowed with the topology of pointwise convergence, inherited from the Tychonoff product $\IF^X$, where $\IF$ is the field of real or complex numbers. In the next theorem we summarize some known results (although the clause (iii) is proved in 9.4.6(a) of \cite{Edwards} only for normal spaces, the analysis of the proof for the spaces $C_p(X)$ and $\CC(X)$ shows that the assertion remains true for any Tychonoff space $X$).
\begin{theorem} \label{t:function-DP}
Let $X$ be a Tychonoff space, and let $\alpha$ be a  nonzero countable ordinal. Then:
\begin{enumerate}
\item[{\rm(i)}] {\rm(\cite[9.4.6(a)]{Edwards})} $C_p(X)$ has the $DP$ property and the strict $DP$ property;
\item[{\rm(ii)}] {\rm(\cite[Theorem~1.6]{GK-DP})} $C_p(X)$ has the sequential $DP$ property;
\item[{\rm(iii)}] {\rm(\cite[9.4.6(a)]{Edwards})} $\CC(X)$ has the $DP$ property and the strict $DP$ property;
\item[{\rm(iv)}] {\rm(\cite[Theorem~1.7]{GK-DP})} if $X$ is an ordinal space $[0,\kappa)$ or a locally compact paracompact space, then $\CC(X)$ has the sequential $DP$ property;
\item[{\rm(v)}] {\rm(\cite[Theorem~3.6]{BG-Baire-lcs})} $B_\alpha(X)$ has the $DP$ property and the sequential $DP$ property.
\end{enumerate}
\end{theorem}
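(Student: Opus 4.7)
The statement is essentially a compilation of results already proved elsewhere, so the plan is to verify each clause by invoking the cited source rather than re-deriving it from scratch. Clauses (i) and (iii) (for normal $X$) are exactly \cite[9.4.6(a)]{Edwards}; clauses (ii) and (iv) are \cite[Theorems~1.6 and 1.7]{GK-DP}; clause (v) is \cite[Theorem~3.6]{BG-Baire-lcs}. The only point that requires an additional argument is the parenthetical remark, namely that the Edwards proof of clause (iii), stated there under the hypothesis that $X$ is normal, in fact goes through for every Tychonoff space $X$.

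To justify this extension, I would proceed in three steps. First, use Theorem~\ref{t:DP} to reduce (iii) to showing that every absolutely convex, weakly compact subset of $\CC(X)$ is $\tau_{\Sigma'}$-precompact, and that every weakly Cauchy sequence in $\CC(X)$ is $\tau_{\Sigma'}$-Cauchy. Second, identify the sets $\Sigma'\bigl(\CC(X)'\bigr)$: the equicontinuous subsets of $\CC(X)'_\beta$ are exactly those contained in polars of compact-open $0$-neighborhoods, hence consist of Radon measures whose supports are contained in some common compact set $K\subseteq X$, with a uniform bound on total variation. Third, observe that the calculation in \cite[9.4.6(a)]{Edwards} that establishes precompactness/Cauchy behavior takes place entirely on the compactum $K$ and uses only the Riesz representation on $K$ together with a dominated-convergence-type argument for measures on $K$.

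The only place where normality enters Edwards' argument is in securing the description of the equicontinuous dual sets; since this description already holds for any Tychonoff $X$ (the compact supports $K$ are automatically normal as compact Hausdorff spaces, which is all that is needed for Riesz representation), the hypothesis of normality on $X$ itself is redundant. I expect this step—checking carefully that normality of $X$ is nowhere invoked in the Edwards computation outside the dual description—to be the main, if minor, obstacle; once it is in hand, Theorem~\ref{t:DP} transports the conclusion back to the definition of the (strict) $DP$ property for $\CC(X)$ over an arbitrary Tychonoff $X$.
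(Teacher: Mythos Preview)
Your proposal is correct and matches the paper's treatment: Theorem~\ref{t:function-DP} is presented as a summary of cited results with no proof, and the paper's only contribution is the parenthetical remark that the Edwards argument for (iii), stated there for normal $X$, carries over to arbitrary Tychonoff $X$. Your three-step justification of that extension---reducing via Theorem~\ref{t:DP} to the Grothendieck topology, observing that equicontinuous subsets of $\CC(X)'$ are measures supported on a common compact $K\subseteq X$, and noting that the Edwards computation lives on $K$ (which is automatically normal as a compact Hausdorff space)---is exactly the kind of inspection the paper asks the reader to perform and is not spelled out there.
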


Let $p\in[1,\infty]$, and let $E$ be a locally convex space. Recall (see Section 19.4 in \cite{Jar}) that a sequence $\{x_n\}_{n\in\w}$ in $E$ is called {\em weakly $p$-summable} if  for every $\chi\in E'$ it follows that $(\langle\chi,x_n\rangle)\in \ell_p$ if $p\in[1,\infty)$ and $(\langle\chi,x_n\rangle)\in c_0$ if $p=\infty$.

Let $p\in[1,\infty]$, and let $X$ and $Y$ be Banach spaces.  Generalizing the notion of completely continuous operators   Castillo and S\'{a}nchez  defined in \cite{CS} an operator $T:X\to Y$ to be {\em $p$-convergent} if $T$ sends weakly $p$-summable sequences of $X$ into null-sequences of $Y$. In their  fundamental article  \cite{CS},  Castillo and S\'{a}nchez defined the $p$-version of the Dunford--Pettis property as follows.

\begin{definition}[\cite{CS}] \label{def:DP-p-Banach}{\em
Let $p\in[1,\infty]$. A Banach space $X$ is said to have the {\em Dunford--Pettis property of order $p$} (the {\em $DP_p$ property}) if for each Banach space $Y$, every weakly compact operator $T:X\to Y$ is $p$-convergent.\qed}
\end{definition}
So, a Banach space has the $DP$ property if and only if it has the $DP_\infty$ property.

Generalizing the aforementioned Proposition 2 of Grothendieck \cite{Grothen}, the following characterization of Banach spaces with the  $DP_p$ property was obtained in Proposition 3.2 of  \cite{CS}.

\begin{proposition}[\cite{CS}] \label{p:Banach-DPp}
Let $p\in[1,\infty]$. For a Banach space $X$ the following assertions are equivalent:
\begin{enumerate}
\item[{\rm(i)}] $X$ has the  $DP_p$ property;
\item[{\rm(ii)}] if $\{x_n\}_{n\in\w}$ is a weakly $p$-summable sequence  in $X$ and  $\{\chi_n\}_{n\in\w}$ is a weakly null-sequence  in $X^\ast$, then $\langle\chi_n,x_n\rangle\to 0$;
\item[{\rm(iii)}] every weakly compact operator $T$ from $X$ to a Banach space $Y$ transforms weakly $p$-compact sets of $X$ into norm compact sets of $Y$.
\end{enumerate}
\end{proposition}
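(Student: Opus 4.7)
The plan is to prove the cycle of implications (iii) $\Rightarrow$ (i) $\Rightarrow$ (ii) $\Rightarrow$ (iii). The implication (iii) $\Rightarrow$ (i) is the cheapest: every weakly $p$-summable sequence $\{x_n\}$ is weakly null (apply the definition to each single $\chi\in X^\ast$), and every subsequence is again weakly $p$-summable, so the set $K=\{x_n:n\in\w\}\cup\{0\}$ is weakly $p$-compact. Given a weakly compact operator $T\colon X\to Y$, item (iii) makes $T(K)$ norm compact; since $T(x_n)\to 0$ weakly, every norm-convergent subsequence must converge to $0$, whence $T(x_n)\to 0$ in norm and $T$ is $p$-convergent.

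For (i) $\Rightarrow$ (ii) the natural test operator is $T\colon X\to c_0$ defined by $T(x):=(\chi_n(x))_{n\in\w}$. Weak nullness of $\{\chi_n\}$ makes $T(x)\in c_0$ for every $x\in X$, and $T$ is bounded because a weakly null sequence in $X^\ast$ is norm bounded by Banach--Steinhaus. The crucial point is that $T$ is weakly compact: one checks $T^\ast e_n=\chi_n$, so $(T^{\ast\ast}x^{\ast\ast})_n=\la x^{\ast\ast},\chi_n\ra\to 0$ for every $x^{\ast\ast}\in X^{\ast\ast}$ (precisely because $\{\chi_n\}$ is $\sigma(X^\ast,X^{\ast\ast})$-null), and hence $T^{\ast\ast}(X^{\ast\ast})\subseteq c_0$. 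The $DP_p$ property then forces $T$ to be $p$-convergent, so $\|T(x_n)\|_\infty=\sup_m|\chi_m(x_n)|\to 0$; in particular $|\la\chi_n,x_n\ra|\to 0$.

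The main work is (ii) $\Rightarrow$ (iii). First I would show that (ii) forces every weakly compact operator $T\colon X\to Y$ to be $p$-convergent. If it were not, after passing to a subsequence one could assume $\|T(x_n)\|\ge\delta>0$ for some weakly $p$-summable $\{x_n\}$, and pick $\chi_n\in Y^\ast$ with $\|\chi_n\|\le 1$ and $\chi_n(T(x_n))\ge\delta/2$. By Gantmacher's theorem $T^\ast\colon Y^\ast\to X^\ast$ is weakly compact, so by Eberlein--\v Smulian one can extract a further subsequence with $T^\ast(\chi_n)\to\psi$ weakly in $X^\ast$. The sequence $\eta_n:=T^\ast(\chi_n)-\psi$ is weakly null in $X^\ast$, and (ii) applied to $\{\eta_n\}$ and $\{x_n\}$ gives $\la\eta_n,x_n\ra\to 0$; combined with $\la\psi,x_n\ra\to 0$ (since $x_n\to 0$ weakly) this forces $\chi_n(T(x_n))=\la T^\ast(\chi_n),x_n\ra\to 0$, contradicting $\chi_n(T(x_n))\ge\delta/2$. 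Once $T$ is known to be $p$-convergent, for any weakly $p$-compact $K\subseteq X$ and any sequence $(x_n)\subseteq K$ one extracts a subsequence converging in the weak-$p$ sense to some $x\in K$; then $\{x_{n_k}-x\}$ is weakly $p$-summable, so $T(x_{n_k})-T(x)\to 0$ in norm, proving $T(K)$ norm compact.

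The decisive and most delicate step is the extraction-plus-translation device in (ii) $\Rightarrow$ (iii): Gantmacher's theorem---exactly where the hypothesis that $T$ is weakly compact (not merely bounded) enters---lets one replace the bounded but a priori not weakly null sequence $T^\ast(\chi_n)$ by a weakly null one by subtracting its weak cluster point, thereby bringing the situation into the range of hypothesis (ii). Everything else in the argument is bookkeeping around the definitions of weak $p$-summability, $p$-convergence and weak $p$-compactness, together with the observation that weakly $p$-summable sequences are weakly null.
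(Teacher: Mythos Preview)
The paper does not prove this proposition: it is quoted from \cite{CS} as background and motivation, with no proof given in the article itself. So there is no ``paper's proof'' to compare against; the relevant question is simply whether your argument is correct, and it is.

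All three implications are handled cleanly. In (iii)$\Rightarrow$(i) you correctly observe that $\{x_n\}\cup\{0\}$ is weakly $p$-compact and use norm compactness of its image together with weak convergence to force $T(x_n)\to 0$. In (i)$\Rightarrow$(ii) the operator $T\colon X\to c_0$, $Tx=(\chi_n(x))_n$, is the standard device; your justification that $T$ is weakly compact via $T^{\ast\ast}(X^{\ast\ast})\subseteq c_0$ is exactly right, and this is precisely where the hypothesis that $\{\chi_n\}$ is $\sigma(X^\ast,X^{\ast\ast})$-null (not merely weak$^\ast$-null) is used. In (ii)$\Rightarrow$(iii) the Gantmacher/Eberlein--\v Smulian extraction followed by subtracting the weak cluster point is the classical trick, and your closing compactness argument for $T(K)$ is fine. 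This is essentially the original argument of Castillo and S\'anchez.
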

In \cite{CS} it is  also constructed numerous distinguished examples of Banach spaces with or without the  $DP_p$ property. Since the Dunford--Pettis property of order $p$ for Banach spaces and especially Banach lattices was intensively studied by many authors, see for example \cite{ABEE,EH,Ghenciu-pGP,GhLe,NLE}.
A more general notion than the $DP_p$ property was proposed by Karn and Sinha \cite{Karn-Sinha}.

Let $p\in[1,\infty]$, and let $E$ and $L$ be locally convex spaces. Extending the notion of $p$-convergent operators between Banach spaces to the general case, we defined in \cite{Gab-Pel} an operator $T:E\to L$ to be {\em $p$-convergent} if $T$ sends weakly $p$-summable sequences of $E$ into null-sequences of $L$.

All the above mentioned definitions and results motivate us to introduce the following Dunford--Pettis type properties in the realm of all locally convex spaces.
\begin{definition} \label{def:DP-general}{\em
Let $p,q\in[1,\infty]$. A locally convex space $E$ is said to have
\begin{enumerate}
\item[$\bullet$] the {\em strict Dunford--Pettis property of order $p$} (the {\em strict $DP_p$ property} or the {\em $SDP_p$ property}) if for each Banach space $L$, every operator $T:E\to L$, which transforms bounded sets into relatively weakly compact sets,  is $p$-convergent;
\item[$\bullet$] the {\em sequential Dunford--Pettis property of order $(p,q)$} (the {\em sequential $DP_{(p,q)}$ property} or the {\em $sDP_{(p,q)}$ property}) if
    \[
      \lim_{n\to \infty} \langle\chi_n,x_n\rangle=0
    \]
    for every weakly $p$-summable sequence $\{ x_n\}_{n\in\w}$ in $E$ and each weakly $q$-summable sequence $\{ \chi_n\}_{n\in\w}$ in  $E'_\beta$. If $p=q$ we shall say that $E$ has the {\em sequential $DP_{p}$ property}. \qed
\end{enumerate}
} 
\end{definition}
It is clear that  $E$ has the strict $DP_{\infty}$ property if and only if it has the strict $DP$ property, and $E$ has the  sequential $DP_{\infty}$ property  if and only if it has the sequential $DP$ property.

The purpose of the article is to study the  $DP$ property, the strict $DP_{p}$ property and the sequential $DP_{(p,q)}$ property in the class of all locally convex spaces.

Now we describe the content of the article in detail.
In Section \ref{sec:prel} we fix the main notions, recall some known results (some of them are taken from our recent article \cite{Gab-Pel}) and prove several necessary preliminary results.

One of the most important classes of topological spaces is  the class of angelic spaces. By the classical Eberlein--\v{S}mulian theorem  Banach spaces with the weak topology are angelic. Generalizing this theorem Grothendieck showed that the space $C_p(K)$ over any compact space $K$ is angelic. 
To get natural relationships between Dunford--Pettis type properties we define and study  in Section \ref{subsec:p-angelic}
weakly (sequentially) $p$-angelic spaces for every $p\in[1,\infty]$. In Proposition \ref{p:Lp-q-angelic} we show that if $1< p<\infty$ and $q\in[1,\infty]$, then $\ell_p$ is weakly sequentially $q$-angelic if and only if it is weakly $q$-angelic if and only if $q\geq p^\ast$, where $\tfrac{1}{p}+\tfrac{1}{p^\ast}=1$.


In Section \ref{sec:char-DP} we characterize locally convex spaces with the Dunford--Pettis type property, the strict $DP_p$ property and the sequential $DP_{(p,q)}$ property. It turns out that these properties can be characterized using the following Schur type properties, were $E_w$ denotes an lcs $E$ endowed with the weak topology.
\begin{definition} \label{def:weak-Glick} {\em
Let $p\in[1,\infty]$. A locally convex space $E$ is said to have a
\begin{enumerate}
\item[$\bullet$] the {\em Glicksberg property} if $E$ and $E_w$ have the same  compact sets;
\item[$\bullet$] a {\em weak Glicksberg property} if $E$ and $E_w$ have the same absolutely convex compact sets;
\item[$\bullet$] the {\em Schur property} if $E$ and $E_w$ have the same convergent sequences;
\item[$\bullet$] the {\em $p$-Schur property} if every weakly $p$-summable sequence in $E$ is a null sequence.\qed
\end{enumerate}}
\end{definition}
The Glicksberg property and the Schur property are widely studied in topology and analysis, see for example \cite{Gabr-free-resp} and \cite{Gab-Respected} and references therein. For $p\in[1,\infty]$, Banach spaces with the $p$-Schur property was defined independently by Deghani and Moshtaghioun \cite{DM} and Fourie and Zeekoei \cite{FZ-pL}, and the general case was considered in \cite{Gab-Pel}. In Theorem \ref{t:def-DP} we show that an lcs $E$ has the $DP$ property if and only if the space $E$ endowed with the Grothendieck topology $\tau_{\Sigma'}$ defined above has the weak Glicksberg property, and Theorem \ref{t:strict-DPp} states that $E$ has the strict $DP_p$ property if and only if the space $(E,\tau_{\Sigma'}) $ has the $p$-Schur property. In this section we also construct numerous (counter)examples which show that all Dunford--Pettis type properties defined in Definitions  \ref{def:DP} and \ref{def:DP-general}   are different. In particular, we give the first example of a locally convex space with the strict $DP$ property but without the $DP$ property, see
Example \ref{exa:SDP-non-DP}. On  the other hand, Example \ref{exa:weak-Glick-non-Glick} shows that there are locally convex spaces with  the weak Glicksberg property but without even the Schur property.

In Section \ref{sec:perm-DP} we study permanent properties of Dunford--Pettis type properties. In Proposition \ref{p:weak-top-DP} we show that if $E=E_w$, then $E$ has  the $DP$ property, the strict $DP_p$ property, and if in addition $E$ is quasibarrelled, then $E$ has also the sequential $DP_{(p,q)}$ property. In this proposition we show also that if $E$ or its strong dual $E'_\beta$ is feral (an lcs is called {\em feral} if  all its bounded subsets are finite-dimensional),  then $E$ has the sequential $DP_{(p,q)}$ property. As a consequence in Corollary \ref{c:DP-weaker} we generalize (i), (ii) and (v) of Theorem \ref{t:function-DP} by showing that any subspace of $\IF^X$ containing $C_p(X)$ has the $DP$ property, the strict $DP_p$ property and the sequential $DP_{(p,q)}$ property. In Proposition \ref{p:DP-product} we show that arbitrary (Tychonoff) products and locally convex direct sums of spaces with the $DP$ property, the strict $DP_p$ property or the sequential $DP_{(p,q)}$ property have the same property. On the other hand, quotients  and completions of locally convex spaces with the Dunford--Pettis property may not have the $DP$ property, see Proposition \ref{p:L(X)-DP} and Proposition \ref{p:Lp-DP}.
In Proposition \ref{p:DP=>seqDP} we consider relationships between the Dunford--Pettis type properties for the important case of locally complete spaces. In particular, we show that (1) a locally complete space $E$ with the $DP$ property has also the strict $DP_p$ property, and the converse is true if additionally $E$ is a weakly sequentially $p$-angelic space, and (2) if a locally complete space $E$ is such that it is weakly sequentially $p$-angelic and $E'_\beta$ is a weakly sequentially $q$-angelic space, then the sequential $DP_{(p,q)}$ property implies the $DP$ property and  the strict $DP_p$ property.
Consequently we essentially extend the aforementioned  Proposition 3.3 of \cite{ABR} 
by showing that if a quasibarrelled locally complete space $E$ has the $DP$ property, then $E$ has  the strict $DP_p$ property and the sequential $DP_{(p,q)}$ property for all  $p,q\in[1,\infty]$, see Corollary \ref{c:DP=>sDPp}.


\section{Preliminaries results and notations} \label{sec:prel}





We start with some necessary definitions and notations used in the article. Set  $\w:=\{ 0,1,2,\dots\}$.
All topological spaces are assumed to be Tychonoff (= completely regular and $T_1$). The closure of a subset $A$ of a topological space $X$ is denoted by $\overline{A}$, $\overline{A}^X$ or $\cl_X(A)$.

Let $(E,\tau)$ be a locally convex space. The span of a subset $A$ of $E$ and its closure are denoted by $\spn(A)$ and $\cspn(A)$, respectively. We denote by $\Nn_0(E)$ (resp., $\Nn_{0}^c(E)$) the family of all (resp., closed absolutely convex) neighborhoods of zero of $E$. The family of all bounded subsets of $E$ is denoted by $\Bo(E)$. The topological dual space of $E$  is denoted by $E'$. The value of $\chi\in E'$ on $x\in E$ is denoted by $\langle\chi,x\rangle$ or $\chi(x)$.  We denote by $E_w$ and $E_\beta$ the space $E$ endowed with the weak topology $\sigma(E,E')$ or with the strong topology $\beta(E,E')$, respectively. The topological dual space $E'$ of $E$ endowed with weak$^\ast$ topology $\sigma(E',E)$ or with the strong topology $\beta(E',E)$ is denoted by $E'_{w^\ast}$ or $E'_\beta$, respectively. The closure of a subset $A$ in the weak topology is denoted by $\overline{A}^{\,w}$ or $\overline{A}^{\,\sigma(E,E')}$, and $\overline{B}^{\,w^\ast}$ (or $\overline{B}^{\,\sigma(E',E)}$) denotes the closure of $B\subseteq E'$ in the weak$^\ast$ topology. The {\em polar} of a subset $A$ of $E$ is denoted by
\[
A^\circ :=\{ \chi\in E': \|\chi\|_A \leq 1\}, \quad\mbox{ where }\quad\|\chi\|_A=\sup\big\{|\chi(x)|: x\in A\cup\{0\}\big\}.
\]
A subset $B$ of $E'$ is {\em equicontinuous} if $B\subseteq U^\circ$ for some $U\in \Nn_0(E)$.
A locally convex vector topology $\TTT$ on $E$ is called {\em compatible with $\tau$} if $(E,\tau)'=(E,\TTT)'$ algebraically. It is well known that there is a finest locally convex vector topology $\mu(E,E')$  compatible with $\tau$. The topology $\mu(E,E')$ is called the {\em Mackey topology}, and if $\tau=\mu(E,E')$, the space $E$ is called a {\em Mackey space}. Set $E_\mu:=\big(E,\mu(E,E')\big)$. Any bornology $\AAA(E)$ on $E$  defines the {\em $\AAA$-topology $\TTT_\AAA$} on the dual space $E'$, i.e. $\TTT_\AAA$ is the topology  of uniform convergence on the members of $\AAA$. We write $E'_\AAA :=(E', \TTT_\AAA)$.

We need the next easy lemma.
\begin{lemma} \label{l:polar-open}
If $U$ is a neighborhood of zero in a locally convex space $E$, then $U^\circ$ is a complete, absolutely convex and bounded subset of $E'_\beta$.
\end{lemma}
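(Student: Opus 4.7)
The plan is to verify the three properties of $U^\circ$ in order of increasing difficulty, with completeness being the only step that requires more than bookkeeping.

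First, absolute convexity is immediate from the definition: for $\chi_1,\chi_2\in U^\circ$ and scalars with $|\alpha|+|\beta|\le 1$, the triangle inequality gives $|\alpha\chi_1(x)+\beta\chi_2(x)|\le |\alpha|+|\beta|\le 1$ for every $x\in U$.

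Next, for boundedness in $E'_\beta$, I would recall that a basic neighborhood of $0$ in $E'_\beta$ has the form $B^\circ$ with $B\in\Bo(E)$, so it suffices to show that every such $B^\circ$ absorbs $U^\circ$. Since $U\in\Nn_0(E)$ and $B$ is bounded, there exists $\lambda>0$ with $B\subseteq\lambda U$; this yields $U^\circ\subseteq\lambda\,B^\circ$ via the obvious inclusion of polars, which is exactly what is required.

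The main step is completeness. By the Banach--Alaoglu theorem, $U^\circ$ is $\sigma(E',E)$-compact, hence $\sigma(E',E)$-complete. Given a $\beta(E',E)$-Cauchy net $(\chi_i)_{i\in I}$ in $U^\circ$, I would observe that $\beta(E',E)$ is finer than $\sigma(E',E)$, so the net is also weak${}^\ast$-Cauchy; by weak${}^\ast$-compactness of $U^\circ$, it converges in $\sigma(E',E)$ to some $\chi\in U^\circ$. It remains to upgrade this to $\beta(E',E)$-convergence. Fix $B\in\Bo(E)$ and $\e>0$. By the Cauchy condition, there is $i_0$ such that
\[
\sup_{x\in B}|\chi_i(x)-\chi_j(x)|\le \e \quad\text{for all }i,j\ge i_0.
\]
For fixed $i\ge i_0$ and $x\in B$, passing to the limit in $j$ (using $\chi_j(x)\to\chi(x)$) gives $|\chi_i(x)-\chi(x)|\le \e$, and taking the supremum over $x\in B$ shows $\chi_i\to\chi$ in $\beta(E',E)$.

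The only subtle point is justifying that the pointwise passage to the limit preserves the uniform estimate; this is the standard ``switch of limits'' argument enabled by the Cauchy condition being uniform in $j$ on $B$. No further obstacles are expected.
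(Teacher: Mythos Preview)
Your proof is correct. The approach aligns with the paper's: both reduce completeness to the weak$^\ast$-compactness of $U^\circ$ via Alaoglu. The only difference is one of packaging: the paper observes that $U^\circ$ is strongly closed (since $\beta(E',E)\supseteq\sigma(E',E)$) and then invokes Theorem~3.2.4 of \cite{Jar}, which says that a $\sigma(E',E)$-closed set is complete for any polar topology on $E'$; you instead carry out the Cauchy-net argument by hand, which is precisely the proof of that theorem specialized to this situation. Your version is more self-contained, the paper's is shorter by citation; there is no substantive difference in the mathematics.
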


\begin{proof}
Clearly, $U^\circ$ is absolutely convex, and it is well known that $U^\circ$ is a  bounded subset of $E'_\beta$. We show that $U^\circ$ is complete under the strong topology. By the Alaoglu theorem, $U^\circ$ is weak$^\ast$-compact. Since the strong topology is finer than the weak$^\ast$ topology it follows that $U^\circ$ is strongly closed. 
Taking into account that the strong topology is a polar topology, Theorem 3.2.4 of \cite{Jar} implies that $U^\circ$ is a complete subset of $E'_\beta$. \qed
\end{proof}

Let $E$ and $L$ be locally convex spaces. If $T:E\to L$ is a weakly (= weak-weak) continuous linear map, we denote by $T^\ast:L'\to E'$ the adjoint linear map defined by $\langle T^\ast(\chi),x\rangle=\langle \chi, T(x)\rangle$ for every $\chi\in L'$ and each $x\in E$. The family of all  operators (= continuous linear maps) from $E$ to $L$ is denoted by $\LL(E,L)$.

Recall that an lcs $E$ is called {\em semi-reflexive} if the canonical map $J_E:E\to E''=(E'_\beta)'_\beta$ defined by  $\langle J_E(x),\chi\rangle:=\langle\chi,x\rangle$ ($\chi\in E'$) is an algebraic isomorphism; if in addition $J_E$ is a topological isomorphism the space $E$ is called {\em reflexive}. Recall also that an lcs $E$ is called {\em semi-Montel} if every bounded subset of $E$ is relatively compact.

Recall that a locally convex space  $E$
\begin{enumerate}
\item[$\bullet$] is {\em sequentially complete} if each Cauchy sequence in $E$ converges;
\item[$\bullet$]  is {\em locally complete} if the closed absolutely convex hull of a null sequence in $E$ is compact;
\item[$\bullet$] has the {\em Krein property} if the closed absolutely convex hull of each weakly compact subset $K$ of $E$ is weakly compact.
\end{enumerate}
Every sequentially complete space is locally complete, but the converse is not true in general.
The Krein property was defined and characterized in \cite{Gabr-free-resp} being motivated by the Krein theorem \cite[\S~24.5(4)]{Kothe}. We shall use repeatedly the next result proved in Theorem 10.2.4 of \cite{Jar}.

\begin{theorem} \label{t:weakly-p-lc}
For an lcs $(E,\tau)$ the following assertions are equivalent:
\begin{enumerate}
\item[{\rm(i)}] $E$ is locally complete;
\item[{\rm(ii)}] for some $($every$)$ $p\in[1,\infty]$, the closed absolutely convex hull of any weakly $p$-summable sequence in $E$ is weakly compact.
\end{enumerate}
\end{theorem}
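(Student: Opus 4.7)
The plan is to establish the equivalence by proving $(i) \Rightarrow (ii)$ for every $p \in [1, \infty]$, and then that $(ii)$ for some $p$ implies $(i)$. The ``some (every)'' dichotomy is automatic because a weakly $p$-summable sequence is weakly $q$-summable whenever $p \leq q$, so $(ii)_\infty$ is the strongest condition and implies $(ii)_p$ for all other $p$.

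For $(i) \Rightarrow (ii)$: Let $\{x_n\}$ be weakly $p$-summable in a locally complete $E$. Since any such sequence is weakly null, hence weakly bounded, it is bounded in $E$ by Mackey's theorem. I would then construct the linear operator $T$ from the corresponding dual sequence space ($\ell_{p^\ast}$ for $p\in(1,\infty)$, $c_0$ for $p=1$, and $\ell_1$ for $p=\infty$) into $E$ by $T(\alpha) := \sum_n \alpha_n x_n$. The key technical step is verifying that this series converges in $E$: the partial sums are Cauchy in the Banach disk $E_B$ built from a bounded absolutely convex $B \supseteq \{x_n\}$, and local completeness of $E$ makes $E_B$ a Banach space, giving convergence. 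The weak-weak continuous operator $T$ sends the closed unit ball of the domain to a set coinciding, up to closure, with $\overline{\acx}(\{x_n\})$. For $p\in(1,\infty)$ the unit ball of $\ell_{p^\ast}$ is weakly compact by reflexivity, so its continuous image is weakly compact. For the endpoints $p\in\{1,\infty\}$, where the unit ball of $c_0$ or $\ell_1$ is not weakly compact, I would instead invoke a Gantmacher-type argument exploiting that the adjoint $T^\ast$ takes values in $c_0 \subset \ell_\infty$ (because $\chi(x_n)\to 0$ for every $\chi \in E'$), combined with an Eberlein--\v{S}mulian diagonal extraction.

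For $(ii) \Rightarrow (i)$: Let $\{x_n\}$ be a null sequence in $E$. It is weakly null, hence weakly $\infty$-summable, so $(ii)_\infty$ yields directly that $\overline{\acx}(\{x_n\})$ is weakly compact. For smaller $p$, I would first rescale $x_n = \mu_n y_n$ by choosing $(\mu_n)\in \ell_p$ against a fundamental sequence of $\tau$-neighborhoods so that $\{y_n\}$ remains null, turning $\{x_n\}$ into a weakly $p$-summable sequence and letting $(ii)_p$ apply. Independently, $\acx(\{x_n\})$ is $\tau$-precompact by a finite-dimensional approximation: given an absolutely convex $\tau$-neighborhood $U$ of zero, pick $n_0$ with $x_n \in U/2$ for $n \geq n_0$, so $\acx(\{x_n\}) \subseteq \acx\{x_i: i < n_0\} + U/2$ with the first summand finite-dimensional and bounded. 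Finally, an absolutely convex, $\tau$-precompact, weakly compact set $K$ is $\tau$-compact: any $\tau$-Cauchy net in $K$ converges weakly to some $y \in K$ (by weak compactness) and has a $\tau$-limit $\hat y$ in the $\tau$-completion; since $E'$ separates points and both limits agree against every $\chi\in E'$, $\hat y = y \in K$, giving $\tau$-convergence within $K$.

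The hardest part will be handling the endpoint cases $p\in\{1,\infty\}$ in the forward direction, where reflexivity-based compactness of the source unit ball is unavailable and one must route through the structure of $T^\ast$ landing in $c_0$ together with a diagonal Eberlein--\v{S}mulian argument. A secondary delicate point is the rescaling construction in the $p<\infty$ case of $(ii)_p \Rightarrow (i)$, which requires choosing $\mu_n \to 0$ fast enough to lie in $\ell_p$ while keeping $\{x_n/\mu_n\}$ null in $\tau$; this is possible in any lcs by diagonalizing against a fundamental sequence of seminorms defining the topology.
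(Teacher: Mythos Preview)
The paper does not prove this theorem; it is quoted verbatim from Jarchow (Theorem~10.2.4) and used as a black box. So there is no ``paper's proof'' to compare against, and I evaluate your sketch on its own merits.

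Your argument has a genuine gap in the direction $(ii)_p \Rightarrow (i)$ for $p<\infty$. You claim that given a $\tau$-null sequence $\{x_n\}$ one can write $x_n=\mu_n y_n$ with $(\mu_n)\in\ell_p$ and $\{y_n\}$ still $\tau$-null ``by diagonalizing against a fundamental sequence of seminorms defining the topology.'' A general lcs has no countable fundamental system of seminorms; that hypothesis is precisely metrizability. Concretely, in $E=\mathbb{R}^{(0,1)}$ with the product topology take $x_n(t)=n^{-t}$. Then $x_n\to 0$, yet any $\mu_n$ with $\mu_n x_n\to 0$ must satisfy $\mu_n=o(n^t)$ for every $t>0$, forcing sub-polynomial growth of $\mu_n$ and hence $(1/\mu_n)\notin\ell_p$ for any finite $p$. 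So the rescaling step simply fails outside the metrizable setting. A correct route for $(ii)_p\Rightarrow(i)$ with small $p$ is to prove Mackey-completeness instead: given a Mackey--Cauchy sequence, pass to a subsequence with $\sum_n \|s_{n+1}-s_n\|_B<\infty$ for some bounded disk $B$; the increments $y_n:=s_{n+1}-s_n$ are then weakly $1$-summable (hence weakly $p$-summable for every $p$) because $\sum_n|\chi(y_n)|\le \big(\sup_B|\chi|\big)\sum_n\|y_n\|_B<\infty$, and one exploits $(ii)_p$ on this sequence rather than on the original null sequence.

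A second, smaller imprecision: in $(i)\Rightarrow(ii)$ for $1<p<\infty$ you assert that the partial sums of $\sum\alpha_n x_n$ are Cauchy in $E_B$ where $B\supseteq\{x_n\}$. That need not hold, since $\alpha\in\ell_{p^\ast}$ does not give $\sum|\alpha_n|<\infty$. What is true is that the set $D:=\{\sum_{\text{finite}}\alpha_n x_n:\|\alpha\|_{p^\ast}\le 1\}$ is bounded (via a uniform-boundedness estimate over each equicontinuous $U^\circ$), and the partial sums are Cauchy in $E_{\overline{D}}$; local completeness then supplies the limit. Your handling of the endpoints $p\in\{1,\infty\}$ via a Gantmacher-type argument is gestured at but not carried out; this is where the real work lies once the reflexive range $1<p<\infty$ is settled.
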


\begin{lemma} \label{l:weak-Glick}
If a locally convex space $(E,\tau)$ has the Glicksberg property, then it has the weak Glicksberg property; the converse is true if $E$ has the Krein property.
\end{lemma}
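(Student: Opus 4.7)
The plan is to handle the two directions separately, using only that the weak topology $\sigma(E,E')$ is coarser than $\tau$ (so every $\tau$-compact set is weakly compact, and every weakly closed set is $\tau$-closed).

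For the forward direction, assume the Glicksberg property. Let $K \subseteq E$ be an absolutely convex weakly compact set. By assumption, $K$ is then $\tau$-compact; conversely, any $\tau$-compact absolutely convex set is automatically weakly compact. Hence $E$ and $E_w$ share the same absolutely convex compact sets, i.e., $E$ has the weak Glicksberg property. This step is essentially a tautology and needs no serious argument.

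For the converse, assume $E$ has the weak Glicksberg property and the Krein property, and let $K \subseteq E$ be an arbitrary weakly compact set. First, I would pass to the closed absolutely convex hull: set $C := \overline{\acx(K)}^{\,w}$. By the Krein property, $C$ is weakly compact, and by construction $C$ is absolutely convex. The weak Glicksberg property then gives that $C$ is $\tau$-compact. On the other hand, $K$ is weakly compact and hence weakly closed in $E$; since $\sigma(E,E') \subseteq \tau$, any weakly closed set is $\tau$-closed, so $K$ is a $\tau$-closed subset of the $\tau$-compact set $C$, which forces $K$ to be $\tau$-compact. Together with the trivial implication $\tau$-compact $\Rightarrow$ weakly compact, this yields the Glicksberg property.

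I do not anticipate a genuine obstacle here; the only subtle point is remembering that the Krein property (as stated here and in \cite{Gabr-free-resp}) only guarantees weak compactness of the closed absolutely convex hull, not $\tau$-compactness, so one really does need the weak Glicksberg hypothesis to upgrade $C$ from weakly compact to $\tau$-compact before squeezing $K$ inside it.
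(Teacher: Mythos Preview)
Your proof is correct and follows essentially the same approach as the paper's: for the forward direction both treat it as trivial, and for the converse both pass to the closed absolutely convex hull via the Krein property, apply the weak Glicksberg property to get $\tau$-compactness of the hull, and then recover $K$ as a closed (hence compact) subset. Your write-up merely spells out a few details (e.g., why $K$ is $\tau$-closed) that the paper leaves implicit.
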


\begin{proof}
The first assertion is clear. Assume that $E$ has the Krein property. Then for every weakly compact subset $K$ of $E$ its closed absolutely convex hull $\cacx(K)$ is also weakly compact. By the weak Glicksberg property, $\cacx(K)$ and also its closed subspace $K$ are $\tau$-compact. Thus $E$ has the Glicksberg property.\qed
\end{proof}

Recall that a subset $A$ of a topological space $X$ is called
\begin{enumerate}
\item[$\bullet$] {\em sequentially precompact} if every sequence in $A$ has a Cauchy subsequence;
\item[$\bullet$] ({\em relatively}) {\em sequentially compact}  if each sequence in $A$ has a subsequence converging to a point of $A$ (resp., of $X$).
\end{enumerate}

We shall use the next simple lemmas, see Lemma 3.5 and Lemma 17.18 of \cite{Gab-Pel}, respectively.

\begin{lemma} \label{l:seq-p-comp}
A subset $A$ of a sequentially complete locally convex space $E$ is sequentially precompact if and only if it is relatively sequentially compact in $E$.
\end{lemma}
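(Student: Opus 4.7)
The plan is to prove both implications directly from the definitions, using sequential completeness only in one direction.

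For the forward direction, suppose $A$ is sequentially precompact, and let $(x_n)_{n\in\w}$ be an arbitrary sequence in $A$. By definition of sequential precompactness, $(x_n)$ has a Cauchy subsequence $(x_{n_k})_{k\in\w}$ in $E$. Since $E$ is sequentially complete, this Cauchy subsequence converges to some point $x\in E$. Hence every sequence in $A$ has a subsequence converging to a point of $E$, which is exactly the statement that $A$ is relatively sequentially compact in $E$.

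For the reverse direction, assume $A$ is relatively sequentially compact in $E$, and let $(x_n)_{n\in\w}$ be a sequence in $A$. By assumption, there is a subsequence $(x_{n_k})_{k\in\w}$ converging to some $x\in E$. Every convergent sequence in a topological vector space is a Cauchy sequence, so $(x_{n_k})$ is Cauchy, which shows $A$ is sequentially precompact. Here sequential completeness is not needed.

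There is no real obstacle; the only point worth noting is that sequential completeness is used only in the ``precompact $\Rightarrow$ relatively sequentially compact'' direction to turn the Cauchy subsequence into a convergent one, while the opposite implication holds in any locally convex space (indeed, in any uniform space).
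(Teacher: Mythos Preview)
Your proof is correct and is exactly the standard argument; the paper itself does not supply a proof but simply cites Lemma~3.5 of \cite{Gab-Pel}, so there is nothing further to compare.
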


\begin{lemma} \label{l:weak-to-norm}
Let $E$ be a locally convex space such that $E=E_w$, and let $L$ be a normed space. Then every $T\in\LL(E,L)$ is finite-dimensional.
\end{lemma}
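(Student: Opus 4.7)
The plan is to exploit the fact that on $E_w$ every continuous seminorm is dominated by a maximum of finitely many functionals from $E'$, and then to show that $T$ factors through a finite-dimensional quotient.

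First I would use the hypothesis $E=E_w$ to describe the topology of $E$. A basic $0$-neighborhood in $E=(E,\sigma(E,E'))$ has the form $U(\chi_1,\dots,\chi_n;\varepsilon)=\{x\in E:|\chi_i(x)|<\varepsilon\text{ for }i=1,\dots,n\}$, where $\chi_1,\dots,\chi_n\in E'$ and $\varepsilon>0$. Since $T\in\LL(E,L)$ and $L$ is a normed space, the preimage under $T$ of the open unit ball of $L$ is a $0$-neighborhood in $E$, so there exist $\chi_1,\dots,\chi_n\in E'$ and $\varepsilon>0$ such that
\[
\max_{1\leq i\leq n}|\chi_i(x)|<\varepsilon \;\Longrightarrow\; \|T(x)\|<1.
\]

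Next I would upgrade this to a norm estimate by homogeneity. For any $x\in E$ with $\max_i|\chi_i(x)|\neq 0$, applying the implication to $\varepsilon x/(2\max_i|\chi_i(x)|)$ yields $\|T(x)\|\leq (2/\varepsilon)\max_i|\chi_i(x)|$. Moreover, if $\chi_i(x)=0$ for every $i$, then for every $\lambda>0$ the vector $\lambda x$ still satisfies $\max_i|\chi_i(\lambda x)|=0<\varepsilon$, whence $\|\lambda T(x)\|<1$ for all $\lambda>0$, forcing $T(x)=0$. In particular
\[
N:=\bigcap_{i=1}^{n}\ker(\chi_i)\subseteq\ker(T).
\]

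Finally I would invoke the standard linear-algebraic fact that if $\chi_1,\dots,\chi_n$ are linear functionals on a vector space $E$, then the quotient $E/\bigcap_{i=1}^{n}\ker(\chi_i)$ has dimension at most $n$; indeed, the map $E/N\to\IF^n$, $x+N\mapsto(\chi_1(x),\dots,\chi_n(x))$, is a linear injection. Since $T$ vanishes on $N$, it factors as $T=\bar T\circ\pi$ with $\pi:E\to E/N$ the quotient map and $\bar T:E/N\to L$ linear. Hence $T(E)=\bar T(E/N)$ has dimension at most $n<\infty$, proving the lemma. No step presents a real obstacle; the only point to be careful about is the homogeneity argument handling the case $\max_i|\chi_i(x)|=0$, which I addressed above.
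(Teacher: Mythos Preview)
Your proof is correct and is the standard elementary argument: pull back the unit ball of $L$ to a basic weak neighborhood $\{x:\max_i|\chi_i(x)|<\varepsilon\}$, use homogeneity to get $\bigcap_i\ker\chi_i\subseteq\ker T$, and conclude that $T$ factors through the quotient $E/\bigcap_i\ker\chi_i$, which embeds in $\IF^n$. The paper does not prove this lemma but simply cites it as Lemma~17.18 of \cite{Gab-Pel}, so there is no in-paper argument to compare against; your approach is exactly the expected one.
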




The following statement is Proposition 7.9 of \cite{Gab-Pel}.
\begin{proposition} \label{p:strong-dual-feral}
A locally convex space $E$  carries its weak topology and is quasibarrelled if and only if $E'_\beta$ is feral.
\end{proposition}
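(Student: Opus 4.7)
The plan is to prove both implications by combining two standard facts: that a set $B \subseteq E'$ is equicontinuous precisely when it sits in a polar $U^\circ$ of some $U \in \Nn_0(E)$, and that for a \emph{finite} subset $F \subseteq E'$ the bipolar $F^{\circ\circ}$ coincides with the (finite-dimensional) absolutely convex hull of $F$. Throughout I will use that weak neighborhoods of zero in $E$ are exactly the polars of finite subsets of $E'$, and that equicontinuous sets are strongly bounded and weak$^\ast$-compact (Alaoglu).

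For the forward direction, assume $E=E_w$ and $E$ is quasibarrelled, and let $B$ be a bounded subset of $E'_\beta$. Quasibarrelledness gives $U \in \Nn_0(E)$ with $B \subseteq U^\circ$. Since $E=E_w$, the neighborhood $U$ is a weak neighborhood of zero, so it contains a basic weak neighborhood of the form $V=\{x\in E : |\chi_i(x)|\le\varepsilon,\ i=1,\dots,n\}$ for some $\chi_1,\dots,\chi_n\in E'$. Writing $F=\{\varepsilon^{-1}\chi_1,\dots,\varepsilon^{-1}\chi_n\}$ we have $V=F^\circ$, hence by the bipolar theorem $V^\circ=F^{\circ\circ}=\acx(F)$, which is finite-dimensional. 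Therefore $B\subseteq U^\circ\subseteq V^\circ$ lies in a finite-dimensional subspace of $E'$, proving that $E'_\beta$ is feral.

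For the converse, assume $E'_\beta$ is feral. To see that $E$ is quasibarrelled, take any strongly bounded $B\subseteq E'$. By hypothesis $B$ lies in a finite-dimensional subspace $F=\spn(\chi_1,\dots,\chi_n)$, on which the strong topology agrees with the unique Hausdorff vector topology, so there is $M>0$ with $B\subseteq\bigl\{\sum_{i=1}^n a_i\chi_i : |a_i|\le M\bigr\}$. Setting $W=\{x\in E : |\chi_i(x)|\le 1/(nM),\ i=1,\dots,n\}\in\Nn_0(E)$ yields $|\chi(x)|\le 1$ for all $\chi\in B$ and $x\in W$, i.e.\ $B\subseteq W^\circ$. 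Thus $B$ is equicontinuous and $E$ is quasibarrelled.

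It remains to show $E=E_w$. Let $U\in\Nn_0^c(E)$ (closed absolutely convex). By Lemma~\ref{l:polar-open} (or directly, Alaoglu plus equicontinuity), $U^\circ$ is strongly bounded, hence finite-dimensional by hypothesis; repeating the boundedness-in-finite-dimension argument above produces $\chi_1,\dots,\chi_n\in E'$ and $M>0$ with $U^\circ\subseteq\bigl\{\sum a_i\chi_i : |a_i|\le M\bigr\}$. Then the weak neighborhood $W=\{x\in E : |\chi_i(x)|\le 1/(nM)\}$ satisfies $W\subseteq U^{\circ\circ}$, and since $U$ is closed and absolutely convex, the bipolar theorem gives $U^{\circ\circ}=U$, so $W\subseteq U$. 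As such $U$'s form a base of $\Nn_0(E)$, the original topology is coarser than $\sigma(E,E')$; the reverse inclusion being automatic, we conclude $E=E_w$. The main technical point is essentially the single observation that feralness of $E'_\beta$ collapses each polar $U^\circ$ to a finite-dimensional compact set, which via the bipolar theorem forces $U$ itself to be weakly open—everything else is bookkeeping.
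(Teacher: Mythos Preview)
Your proof is correct. Note, however, that the paper does not supply its own proof of this proposition: it is stated there as Proposition~7.9 of \cite{Gab-Pel} and simply quoted, so there is nothing in the present paper to compare your argument against. Your approach---reducing both directions to the bipolar theorem and the fact that polars of finite sets are finite-dimensional absolutely convex hulls---is the standard and natural one, and all the steps (quasibarrelledness forcing equicontinuity, Lemma~\ref{l:polar-open} giving strong boundedness of $U^\circ$, and the finite-dimensional bounding argument) are carried out cleanly.
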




Let $p\in[1,\infty]$. We recall that a sequence  $\{x_n\}_{n\in\w}$ in an lcs $E$ is called
\begin{enumerate}
\item[$\bullet$] {\em weakly $p$-convergent to $x\in E$} if  $\{x_n-x\}_{n\in\w}$ is weakly $p$-summable;
\item[$\bullet$] {\em weakly $p$-Cauchy} if for each pair of strictly increasing sequences $(k_n),(j_n)\subseteq \w$, the sequence  $(x_{k_n}-x_{j_n})_{n\in\w}$ is weakly $p$-summable.\qed
\end{enumerate}
If $1\leq p<\infty$, we denote by $\ell^w_p(E)$ the family of all weakly $p$-summable sequences in $E$. If $p=\infty$, for the sake of simplicity we denote by $\ell^w_\infty(E):=c_0^w(E)$  the space of all weakly null-sequences in $E$. For numerous results concerning the spaces $\ell^w_p(E)$ see \cite{Gab-Pel}.

Analogously to the corresponding notions in the case of Banach spaces the following  weak $p$-versions of compact-type properties in topological vector spaces (tvs for short) were defined in \cite{Gab-Pel}.
Let $p\in[1,\infty]$. A subset   $A$ of a separated tvs $E$ is called
\begin{enumerate}
\item[$\bullet$] {\em weakly  sequentially $p$-precompact} if every sequence from $A$ has a  weakly $p$-Cauchy subsequence;
\item[$\bullet$] ({\em relatively}) {\em weakly sequentially $p$-compact} if every sequence in $A$ has a weakly $p$-convergent  subsequence with limit in $A$ (resp., in $E$).\qed
\end{enumerate}

Let $p\in[1,\infty]$.  Recall (see \cite{Gab-Pel}) that  a linear map $T:E\to L$ between separated topological vector spaces $E$ and $L$ is said to be
\begin{enumerate}
\item[$\bullet$]  {\em $p$-convergent} if it sends weakly $p$-summable sequences into null-sequences;
\item[$\bullet$]  {\em weakly sequentially $p$-compact} if  for some $U\in \Nn_0(E)$, the set $T(U)$ is a relatively weakly  sequentially $p$-compact subset of $L$;
\item[$\bullet$]   {\em weakly  sequentially $p$-precompact} if  for some $U\in \Nn_0(E)$, the set $T(U)$ is   weakly  sequentially $p$-precompact in $L$.\qed
\end{enumerate}

We shall use the next two lemmas, see Lemma 13.2 and 13.4 in \cite{Gab-Pel}.

\begin{lemma} \label{l:image-p-seq-com}
Let $p\in[1,\infty]$, $E$ and $L$ be locally convex spaces, and let $T:E\to L$ be a weakly continuous linear map. If $A$ is a $($relatively$)$ weakly sequentially $p$-compact set {\rm(}resp., a weakly sequentially $p$-precompact set{\rm)} in $E$, then the image  $T(A)$ is a $($relatively$)$ weakly sequentially $p$-compact {\rm(}resp.,  weakly sequentially $p$-precompact{\rm)}  set in $L$.
\end{lemma}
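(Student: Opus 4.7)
The plan is straightforward and rests on a single observation: the adjoint $T^\ast:L'\to E'$ (which exists because $T$ is weakly continuous) transports continuous linear functionals on $L$ to continuous linear functionals on $E$, and all three properties in question (weakly $p$-summable, weakly $p$-convergent, weakly $p$-Cauchy) are defined purely through pairings with dual elements. So the entire argument is a two-line duality push-forward, repeated three times.

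First I would isolate the basic auxiliary fact: if $\{u_n\}_{n\in\w}$ is a weakly $p$-summable sequence in $E$, then $\{T(u_n)\}_{n\in\w}$ is weakly $p$-summable in $L$. For any $\chi\in L'$,
\[
\langle\chi,T(u_n)\rangle=\langle T^\ast(\chi),u_n\rangle,
\]
and since $T^\ast(\chi)\in E'$, the right-hand side is an $\ell_p$-sequence when $p\in[1,\infty)$ and a $c_0$-sequence when $p=\infty$, by the assumption on $\{u_n\}$.

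Next, I would handle the three cases by lifting to $A$, applying the hypothesis, and pushing forward. For the relatively weakly sequentially $p$-compact case: given $\{y_n\}\subseteq T(A)$, pick $x_n\in A$ with $T(x_n)=y_n$; by hypothesis some subsequence $\{x_{n_k}\}$ is weakly $p$-convergent to an $x\in E$, i.e.\ $\{x_{n_k}-x\}$ is weakly $p$-summable. Applying the auxiliary fact to this sequence, $\{y_{n_k}-T(x)\}$ is weakly $p$-summable in $L$, so $\{y_{n_k}\}$ converges weakly $p$-ly to $T(x)\in L$. The weakly sequentially $p$-compact case is identical, with $x\in A$ yielding $T(x)\in T(A)$. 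For the weakly sequentially $p$-precompact case: lift $\{y_n\}\subseteq T(A)$ to $\{x_n\}\subseteq A$, extract a weakly $p$-Cauchy subsequence $\{x_{n_k}\}$, and note that for any strictly increasing $(k_m),(j_m)\subseteq\w$ the sequence $\{x_{n_{k_m}}-x_{n_{j_m}}\}_{m\in\w}$ is weakly $p$-summable; by the auxiliary fact $\{y_{n_{k_m}}-y_{n_{j_m}}\}_{m\in\w}$ is weakly $p$-summable in $L$, i.e.\ $\{y_{n_k}\}$ is weakly $p$-Cauchy.

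No serious obstacle arises: the whole argument is an application of the functoriality of weak $p$-summability under adjoints of weakly continuous maps, and the only step that is not a pure definition-chase is the single identity $\langle\chi,T(u_n)\rangle=\langle T^\ast\chi,u_n\rangle$.
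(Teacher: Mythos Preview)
Your proof is correct and is exactly the standard argument one would expect. The paper itself does not include a proof of this lemma; it merely cites it from an external reference (Lemma~13.2 in \cite{Gab-Pel}), so there is nothing further to compare.
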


\begin{lemma} \label{l:p-conver-p-Cauchy}
Let $p\in[1,\infty]$, $E$ and $L$ be locally convex spaces, and let $T:E\to L$  be a $p$-convergent linear map. If $\{x_n\}_{n\in\w}\subseteq E$ is weakly $p$-Cauchy, then the sequence $\big\{T(x_n)\big\}_{n\in\w}$ is Cauchy in $L$. Consequently, if $L$ is sequentially complete, then $\big\{T(x_n)\big\}_{n\in\w}$ converges in $L$.
\end{lemma}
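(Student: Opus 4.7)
The plan is to prove the Cauchy-ness of $\{T(x_n)\}_{n\in\w}$ by contradiction, using the very convenient fact that a sequence in an lcs $L$ fails to be Cauchy precisely when one can extract two subsequences whose term-by-term differences stay outside some zero-neighborhood. First I would fix a closed absolutely convex neighborhood $U\in\Nn_0^c(L)$ and suppose, towards contradiction, that $\{T(x_n)\}_{n\in\w}$ is not Cauchy. Then there exist strictly increasing sequences $(k_n),(j_n)\subseteq\w$ and some $U\in\Nn_0(L)$ such that
\[
T(x_{k_n})-T(x_{j_n})\notin U \quad\text{for all } n\in\w.
\]

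Next I would invoke the definition of weakly $p$-Cauchy: applied to the two chosen strictly increasing sequences $(k_n)$ and $(j_n)$, it yields that the sequence $(x_{k_n}-x_{j_n})_{n\in\w}$ is weakly $p$-summable in $E$. Then linearity of $T$ gives $T(x_{k_n})-T(x_{j_n})=T(x_{k_n}-x_{j_n})$, and $p$-convergence of $T$ (which by definition sends weakly $p$-summable sequences to null sequences in $L$) forces $T(x_{k_n})-T(x_{j_n})\to 0$ in $L$. In particular, eventually this difference must lie in $U$, directly contradicting the choice of the subsequences. Hence $\{T(x_n)\}_{n\in\w}$ is Cauchy in $L$.

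The ``consequently'' clause is then immediate: if $L$ is sequentially complete, every Cauchy sequence in $L$ converges, so $\{T(x_n)\}_{n\in\w}$ converges in $L$. I do not foresee any real obstacle here; the argument is essentially a bookkeeping exercise matching the definition of ``weakly $p$-Cauchy'' (which is phrased exactly in terms of pairs of subsequences) with the standard subsequence characterization of non-Cauchy sequences. The only mild care needed is to ensure that the same pair $(k_n),(j_n)$ of strictly increasing indices is fed into both definitions, which is precisely why weakly $p$-Cauchy is defined via \emph{arbitrary} pairs of strictly increasing sequences rather than only via consecutive differences.
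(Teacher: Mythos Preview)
Your argument is correct and is the natural proof: extract two strictly increasing index sequences witnessing the failure of the Cauchy condition, feed them into the definition of weakly $p$-Cauchy to get a weakly $p$-summable difference sequence, and apply $p$-convergence of $T$ to reach a contradiction. The paper does not supply its own proof of this lemma; it simply cites \cite[Lemma~13.4]{Gab-Pel}, so there is nothing further to compare.
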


The next characterization of $p$-convergent operators is given in Proposition 13.5 in \cite{Gab-Pel}.
\begin{proposition} \label{p:p-convergent-s}
Let $p\in[1,\infty]$, and let $T$ be a weak-weak sequentially continuous linear map from a locally convex space $E$ to a sequentially complete locally convex space $L$. Then the following assertions are equivalent:
\begin{enumerate}
\item[{\rm(i)}] $T$ is $p$-convergent;
\item[{\rm(ii)}] $T(A)$ is relatively sequentially compact in $L$ for each weakly sequentially $p$-precompact subset $A$ of $E$;
\item[{\rm(iii)}] $T(A)$ is sequentially precompact in $L$ for any weakly sequentially $p$-precompact subset $A$ of $E$.
\end{enumerate}
\end{proposition}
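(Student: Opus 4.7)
The plan is to prove the three implications cyclically: (i) $\Rightarrow$ (ii) $\Rightarrow$ (iii) $\Rightarrow$ (i).

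For (i) $\Rightarrow$ (ii), I would simply feed Lemma \ref{l:p-conver-p-Cauchy} into the definition. Let $A\subseteq E$ be weakly sequentially $p$-precompact and pick any sequence $\{T(x_n)\}\subseteq T(A)$. Extract a weakly $p$-Cauchy subsequence $\{x_{n_k}\}$ of $\{x_n\}\subseteq A$; then Lemma \ref{l:p-conver-p-Cauchy}, combined with the sequential completeness of $L$, guarantees that $\{T(x_{n_k})\}$ converges in $L$, whence $T(A)$ is relatively sequentially compact. The implication (ii) $\Rightarrow$ (iii) is immediate because a convergent sequence in a locally convex space is Cauchy, so any relatively sequentially compact set is automatically sequentially precompact.

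The main work is in (iii) $\Rightarrow$ (i), where the weak-weak sequential continuity of $T$ enters. Given a weakly $p$-summable sequence $\{x_n\}\subseteq E$, I first observe that every subsequence of $\{x_n\}$ is again weakly $p$-summable and, a fortiori, weakly $p$-Cauchy; this is because $\ell_p$ (and $c_0$ for $p=\infty$) is closed under both the extraction of subsequences and termwise subtraction. Consequently the set $\{x_n:n\in\w\}$ is weakly sequentially $p$-precompact, and by (iii) the image $\{T(x_n):n\in\w\}$ is sequentially precompact in $L$. It remains to show $T(x_n)\to 0$ in $L$.

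Suppose, towards a contradiction, that $T(x_n)\not\to 0$ in $L$. Then some $U\in\Nn_0(L)$ and some subsequence $\{T(x_{n_k})\}$ satisfy $T(x_{n_k})\notin U$ for all $k$. By sequential precompactness of $\{T(x_n)\}$, the subsequence $\{T(x_{n_k})\}$ admits a Cauchy subsubsequence, which converges in $L$ to some $y\in L$ by sequential completeness. On the other hand, since $\{x_n\}$ is weakly null in $E$ and $T$ is weak-weak sequentially continuous, $T(x_n)\to 0$ in $L_w$, so that Cauchy subsubsequence also tends weakly to $y$, forcing $y=0$. But then $T(x_{n_k})\to 0$ along this subsubsequence, contradicting $T(x_{n_k})\notin U$. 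Hence $T(x_n)\to 0$, proving that $T$ is $p$-convergent. The only delicate step is this last identification of the limit as $0$: without the weak-weak sequential continuity assumption, (iii) would deliver only Cauchy-ness of $\{T(x_n)\}$, not convergence to the correct limit.
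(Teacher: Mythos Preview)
Your proof is correct. The paper does not prove this proposition itself; it merely records it as Proposition~13.5 of \cite{Gab-Pel} and uses it later on. Your cyclic argument via Lemma~\ref{l:p-conver-p-Cauchy} for (i)$\Rightarrow$(ii), the trivial (ii)$\Rightarrow$(iii), and the contradiction argument for (iii)$\Rightarrow$(i) using weak--weak sequential continuity to pin the limit at $0$, is exactly the natural route and matches how such results are typically established. One tiny cosmetic point: when you assert that the set $\{x_n:n\in\w\}$ is weakly sequentially $p$-precompact, a sequence drawn from this \emph{set} need not be a subsequence of $(x_n)$ in the strict sense (indices may repeat or be out of order); but any such sequence either has a constant subsequence or a subsequence with strictly increasing indices, so your observation still applies. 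This is routine and does not affect the validity of your argument.
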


We denote by $\bigoplus_{i\in I} E_i$ and $\prod_{i\in I} E_i$  the locally convex direct sum and the topological product of a non-empty family $\{E_i\}_{i\in I}$ of locally convex spaces, respectively. If $\xxx=(x_i)\in \bigoplus_{i\in I} E_i$, then the set $\supp(\xxx):=\{i\in I: x_i\not= 0\}$ is called the {\em support} of $\xxx$. The {\em support}  of a subset $A$ of $\bigoplus_{i\in I} E_i$ is the set $\supp(A):=\bigcup_{a\in A} \supp(a)$. Also we shall consider elements $\xxx=(x_i) \in \prod_{i\in I} E_i$ as functions on $I$ and write $\xxx(i):=x_i$.
The next lemma is Lemma 4.25 of \cite{Gab-Pel}.

\begin{lemma} \label{l:support-p-sum}
Let  $p\in[1,\infty]$, and let $\{E_i\}_{i\in I}$  be a non-empty family of locally convex spaces. Then:
\begin{enumerate}
\item[{\rm(i)}] a sequence $\{\chi_n\}_{n\in\w} \subseteq \big(\prod_{i\in I} E_i\big)'_\beta$ is weakly $p$-summable if and only if its support $F:=\supp\{\chi_n\}$ is finite and the sequence $\{\chi_{n}(i)\}_{n\in\w}$ is weakly $p$-summable in $(E_i)'_\beta$ for every $i\in F$;
\item[{\rm(ii)}] a sequence $\{x_n\}_{n\in\w} \subseteq \prod_{i\in I} E_i$  is weakly $p$-summable  if and only if the sequence $\{x_{n}(i)\}_{n\in\w}$ is weakly  $p$-summable in $E_i$ for every $i\in I$.
\item[{\rm(iii)}] a sequence $\{\chi_n\}_{n\in\w} \subseteq \big(\bigoplus_{i\in I} E_i\big)'_\beta$ is weakly $p$-summable if and only if the sequence $\{\chi_{n}(i)\}_{n\in\w}$ is weakly $p$-summable in $(E_i)'_\beta$ for every $i\in F$;
\item[{\rm(iv)}] a sequence $\{x_n\}_{n\in\w} \subseteq \bigoplus_{i\in I} E_i$  is weakly $p$-summable  if and only if its support $F:=\supp\{x_n\}$ is finite and the sequence $\{x_{n}(i)\}_{n\in\w}$ is weakly  $p$-summable in $E_i$ for every $i\in F$.
\end{enumerate}
\end{lemma}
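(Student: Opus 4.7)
The approach is to reduce everything to parts (ii) and (iv) by exploiting the topological identifications of strong duals
\[
  \bigl(\textstyle\prod_{i\in I} E_i\bigr)'_\beta \,=\, \bigoplus_{i\in I} (E_i)'_\beta
  \qquad\text{and}\qquad
  \bigl(\textstyle\bigoplus_{i\in I} E_i\bigr)'_\beta \,=\, \prod_{i\in I} (E_i)'_\beta.
\]
These follow from routine polar computations: bounded subsets of $\prod_i E_i$ are products of bounded sets, so their polars generate the neighborhood base of the locally convex direct sum topology on $\bigoplus_i (E_i)'_\beta$; bounded subsets of $\bigoplus_i E_i$ have finite support (a standard feature of locally convex direct sums), so their polars generate the neighborhood base of the product topology on $\prod_i (E_i)'_\beta$. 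Granted these identifications, (iii) is exactly (ii) applied to the family $\{(E_i)'_\beta\}_{i\in I}$, and (i) is exactly (iv) applied to the same family. So the work reduces to proving (ii) and (iv).

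For (ii), the forward direction is immediate: each projection $\pi_i\colon\prod_j E_j\to E_i$ is weak-weak continuous and hence preserves weak $p$-summability. For the converse, any $\chi\in\bigl(\prod_j E_j\bigr)' = \bigoplus_j E'_j$ has finite support $F$, so for every $n$,
\[
  \langle \chi,x_n\rangle \,=\, \sum_{i\in F}\bigl\langle \chi(i),\,x_n(i)\bigr\rangle
\]
is a finite sum of scalar sequences, each lying in $\ell_p$ (respectively in $c_0$) by hypothesis, and $\ell_p$ (respectively $c_0$) is stable under finite sums.

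For (iv), the backward implication follows the same template: any $\chi\in\bigl(\bigoplus_j E_j\bigr)' = \prod_j E'_j$ pairs with an $x_n$ having $\supp(x_n)\subseteq F$ to give the identical finite sum. For the forward implication, a weakly $p$-summable sequence is weakly bounded, hence bounded in $\bigoplus_j E_j$ by the coincidence of weak and strong boundedness in an lcs; the standard fact that bounded subsets of a locally convex direct sum have finite support then forces $\bigcup_n \supp(x_n)$ to be finite, and weak $p$-summability of each coordinate sequence is obtained by applying the projection $\pi_i$.

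The one subtlety worth flagging is that when (i) and (iii) are derived from (iv) and (ii) applied to the family of strong duals, weak $p$-summability must be tested against the biduals $(E_i)''=((E_i)'_\beta)'$ rather than against the $E_i$ themselves. The finite-sum structure of the pairings nevertheless carries over verbatim, so no semireflexivity hypothesis on the $E_i$ is required, and this is exactly the step where the topological (as opposed to purely algebraic) identification of the strong duals is essential.
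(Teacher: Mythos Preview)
Your proof is correct. Note, however, that the paper does not actually prove this lemma: it is quoted verbatim as Lemma~4.25 of \cite{Gab-Pel}, so there is no in-paper argument to compare against. Your reduction of (i) and (iii) to (iv) and (ii) via the standard topological identifications $\bigl(\prod_i E_i\bigr)'_\beta = \bigoplus_i (E_i)'_\beta$ and $\bigl(\bigoplus_i E_i\bigr)'_\beta = \prod_i (E_i)'_\beta$ is the natural route, and your treatment of (ii) and (iv) --- projections for the forward direction, finite-support pairings for the converse, and Mackey's theorem plus the finite-support property of bounded sets in direct sums to control the support in (iv) --- is exactly what one expects. Your remark that the reduction forces one to test against biduals rather than the original spaces, and that the topological (not merely algebraic) identification of strong duals is what makes this harmless, is a genuine point and well observed.
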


\begin{lemma} \label{l:seq-p-comp-prod}
Let $p\in[1,\infty]$, and let $\{E_i\}_{i\in I}$ be a non-empty family of locally convex spaces.
\begin{enumerate}
\item[{\rm(i)}]  A subset $A$ of $E=\bigoplus_{i\in I} E_i$ is $($relatively$)$ weakly  sequentially $p$-compact {\rm(}resp.,  weakly sequentially $p$-precompact{\rm)} if and only if the support of $A$ is finite and for every $i\in I$, its projection $\pi_i(A)$ into $E_i$ is  $($relatively$)$ weakly  sequentially $p$-compact {\rm(}resp.,  weakly sequentially $p$-precompact{\rm)}.
\item[{\rm(ii)}] If $I=\w$, then a subset $A$ of $E=\prod_{i\in \w} E_i$ is $($relatively$)$ weakly  sequentially $p$-compact {\rm(}resp.,  weakly sequentially $p$-precompact{\rm)} if and only if for every $i\in \w$, its projection $\pi_i(A)$ into $E_i$ is  $($relatively$)$ weakly  sequentially $p$-compact {\rm(}resp.,  weakly sequentially $p$-precompact{\rm)}.
\end{enumerate}
\end{lemma}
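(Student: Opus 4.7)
The plan is to argue coordinate-wise, combining Lemma~\ref{l:support-p-sum}, which characterises weakly $p$-summable sequences in direct sums and in (countable) products, with Lemma~\ref{l:image-p-seq-com}, which says that weakly continuous linear maps preserve weak sequential $p$-(pre)compactness. The necessity of the coordinate conditions in both (i) and (ii) is then immediate: each projection $\pi_i\colon E\to E_i$ is continuous and linear, hence weakly continuous, and Lemma~\ref{l:image-p-seq-com} transfers each of the three relevant properties from $A$ to $\pi_i(A)$.

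The extra claim in (i) — finiteness of $\supp(A)$ — I would prove by contradiction. Assuming $\supp(A)$ is infinite, I would pick inductively $a_n\in A$ together with an index $i_n\in \supp(a_n)\setminus F_{n-1}$, where $F_{n-1}:=\bigcup_{k<n}\supp(a_k)$; this is possible at every step because $F_{n-1}$ is finite while $\supp(A)$ is not. Then $a_k(i_n)=0$ for every $k<n$, so for any strictly increasing sequences $(k_n),(j_n)\subseteq\w$ with $k_n<j_n$ the index $i_{j_n}$ lies in $\supp(a_{j_n}-a_{k_n})$; consequently, the joint support of $\{a_{j_n}-a_{k_n}\}_{n\in\w}$ is infinite and Lemma~\ref{l:support-p-sum}(iv) rules out weak $p$-summability of this difference sequence. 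This would contradict weak sequential $p$-precompactness of $A$, and a fortiori weak sequential $p$-compactness.

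For the sufficiency direction I start with an arbitrary $\{a_n\}_{n\in\w}\subseteq A$ and set $F:=\supp(A)$ in case (i) (finite by hypothesis) or $F:=\w$ in case (ii). For each $i\in F$ the assumed property of $\pi_i(A)$ yields a subsequence along which $\{\pi_i(a_n)\}$ is weakly $p$-convergent (respectively, weakly $p$-Cauchy) in $E_i$. In case (i) a finite number of successive extractions suffice; in case (ii) a standard Cantor diagonal argument produces a single subsequence $\{a_{n_k}\}$ working simultaneously for all $i\in\w$. Defining $x\in E$ by $x(i):=\lim_k\pi_i(a_{n_k})\in E_i$ for $i\in F$ and $x(i):=0$ off $F$ in case (i), Lemma~\ref{l:support-p-sum}(iv) (case (i)) or Lemma~\ref{l:support-p-sum}(ii) (case (ii)) implies that $\{a_{n_k}-x\}$ is weakly $p$-summable in $E$, i.e., $a_{n_k}\to x$ weakly $p$-convergently; the weakly $p$-precompact variant is handled identically, applying the same coordinate lemma to differences $\{a_{n_k}-a_{m_k}\}$ of interlaced subsequences.

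The main obstacle is the inductive construction in (i). It is not enough to place the markers $i_n$ merely outside $\{i_1,\dots,i_{n-1}\}$; the decisive point is to put them outside the entire cumulative support $F_{n-1}$, which guarantees that \emph{every} pair of interlaced subsequences of $\{a_n\}$ — not only the original one — produces differences with infinite joint support, which is precisely what Lemma~\ref{l:support-p-sum}(iv) forbids for weakly $p$-Cauchy sequences. The rest of the argument is routine, essentially a transfer between the coordinate-wise description of weak $p$-summability and the definitions of weak sequential $p$-(pre)compactness.
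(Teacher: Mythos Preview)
Your argument follows the paper's proof closely: necessity via Lemma~\ref{l:image-p-seq-com}, sufficiency via a diagonal extraction combined with the coordinate-wise description of weak $p$-summability (Lemma~\ref{l:support-p-sum}). The one substantive difference concerns the finiteness of $\supp(A)$ in~(i). The paper simply observes that any (relatively) weakly sequentially $p$-compact or weakly sequentially $p$-precompact set is bounded, and bounded subsets of a locally convex direct sum have finite support; your combinatorial construction is correct but considerably longer than this one-line observation.

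There is, however, a genuine gap in your sufficiency argument for the \emph{non-relative} weakly sequentially $p$-compact case (a gap the paper's own proof shares): the limit $x$ produced by the diagonal argument satisfies $\pi_i(x)\in\pi_i(A)$ for every $i$, but nothing forces $x$ itself to lie in $A$. A minimal counterexample already occurs in $E=\IR\times\IR$: take $A=\{(0,0)\}\cup\{(1/n,1):n\geq 1\}$. Both projections $\pi_0(A)=\{0\}\cup\{1/n:n\geq 1\}$ and $\pi_1(A)=\{0,1\}$ are weakly sequentially $p$-compact for every $p\in[1,\infty]$, yet the sequence $a_n=(1/n,1)$ has no subsequence weakly $p$-converging to a point of $A$, since the only candidate limit $(0,1)$ does not belong to $A$. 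Thus the non-relative equivalence fails as stated; the relatively compact and the precompact cases are unaffected, and those are the only ones actually invoked later (in Proposition~\ref{p:weakly-p-ang-productive}).
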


\begin{proof}
The necessity in both cases (i) and (ii) follows from Lemma \ref{l:image-p-seq-com} applied to the projection $T:=\pi_i$ onto the $i$th coordinate and, for the case (i), the fact that $A$ is bounded (so the support of $A$ is finite). To prove the sufficiency, we distinguish between the cases (i) and (ii).
\smallskip

(i) Since the support of $A$ is finite, the assertion follows from (ii).
\smallskip

(ii) First we consider the (relatively) weakly  sequentially $p$-compact case. Assume that $\pi_i(A)$ is (relatively) weakly  sequentially $p$-compact for every $i\in\w$. Let $\{\xxx_n\}_{n\in\w}$ be a sequence in $A$. We proceed by induction. For $i=0$, since $\pi_0(A)$ is (relatively) weakly  sequentially $p$-compact one can choose an infinite subset $J_0$ of $\w$ such that the subsequence $\{\xxx_n(0)\}_{n\in J_0}$ weakly $p$-converges to some $x(0)\in \pi_0(A)$ (resp., $x(0)\in E_0$). Assume that we have chosen an infinite subset $J_{k-1}$ of $J_{k-2}$   such that the subsequence $\{\xxx_n(k-1)\}_{n\in J_{n-1}}$ weakly $p$-converges to some $x(k-1)\in \pi_{k-1}(A)$ (resp., $x(k-1)\in E_{k-1}$).  Since $\pi_k(A)$ is (relatively) weakly  sequentially $p$-compact, we can  choose  an infinite subset $J_k$ of  $J_{k-1}$  such that the subsequence $\{\xxx_n(k)\}_{n\in J_k}$ weakly $p$-converges to some $x(k)\in \pi_k(A)$ (resp., $x(k)\in E_k$). For every $k\in\w$, choose $n_k\in J_k$ such that $n_k<n_{k+1}$. Set $\xxx:=\big(x(k)\big)_{k\in\w}\in E$. It remains to show that $\xxx_{n_k}$ weakly $p$-converges to $\xxx$. To this end, let $\chi=(\chi_i)\in E'=\bigoplus_{i\in\w} E'_i$ and denote by $F$ the finite support of $\chi$. Then, by construction, we have
\[
\big(\langle\chi,\xxx_{n_k}-\xxx\rangle\big)_{k\in\w}=\Big(\sum_{i\in F} \langle\chi_i,\xxx_{n_k}(i)-\xxx(i)\rangle\Big)_{k\in\w}\in \ell_p \;\; (\mbox{or } \in c_0 \; \mbox{ if $p=\infty$}),
\]
as desired. Thus $A$ is a (relatively) weakly  sequentially $p$-compact set.

Now we consider the weakly  sequentially $p$-precompact case. Assume that $\pi_i(A)$ are weakly  sequentially $p$-precompact for every $i\in\w$, and let $\{\xxx_n\}_{n\in\w}$ be a sequence in $A$. As in the previous case we can extract a subsequence $\{\xxx_{n_k}\}_{k\in\w}$ of $\{\xxx_n\}_{n\in\w}$ such that for every $j\in\w$, the sequence $\{\xxx_{n_k}(j)\}_{k\in\w}$ is weakly $p$-Cauchy in $E_j$. We show that $\{\xxx_{n_k}\}_{k\in\w}$  is weakly $p$-Cauchy in $E$. To this end, we show that for every strictly increasing sequence $\{j_s\}_{s\in\w}$ in $\w$, the sequence $\{\xxx_{n_{j_s}}-\xxx_{n_{j_{s+1}}}\}_{s\in\w}$ is weakly $p$-summable. Fix an arbitrary $\chi=(\chi_t)\in E'$, and let $m\in\w$ be such that $\chi_t=0$ for every $t>m$. Then
\[
\big(\langle\chi,\xxx_{n_{j_s}}-\xxx_{n_{j_{s+1}}}\rangle\big) =\sum_{t\leq m} \Big( \langle\chi_t,\xxx_{n_{j_s}}(t)-\xxx_{n_{j_{s+1}}}(t)\rangle\Big)\in \ell_p \;\; (\mbox{or } \in c_0 \; \mbox{if } p=\infty)
\]
because $\{\xxx_{n_k}(t)\}_{k\in\w}$ is weakly $p$-Cauchy in $E_t$ for every $t\leq m$.
\qed
\end{proof}

Let $p\in[1,\infty]$.  A locally convex space $E$ is called
\begin{enumerate}
\item[$\bullet$] ({\em quasi}){\em barrelled} if every $\sigma(E',E)$-bounded (resp., $\beta(E',E)$-bounded) subset of $E'$ is equicontinuous;
\item[$\bullet$] {\em $p$-{\rm(}quasi{\rm)}barrelled } if every weakly $p$-summable sequence in $E'_{w^\ast}$ (resp., in $E'_\beta$) is equicontinuous.
\end{enumerate}
$p$-(quasi)barrelled  spaces are defined and studied in  \cite{Gab-Pel}.

We denote by $\ind_{n\in \w} E_n$  the inductive limit of a (reduced) inductive sequence $\big\{(E_n,\tau_n)\big\}_{n\in \w}$  of locally convex spaces. If in addition $\tau_m{\restriction}_{E_n} =\tau_n$ for all $n,m\in\w$ with $n\leq m$, the inductive limit $\ind_{n\in \w} E_n$ is called {\em strict} and is denoted by $\SI E_n$. It is well known that $E:=\SI E_n$ is regular, i.e., every bounded subset of $E$ is contained in some $E_n$. For more details we refer the reader to Section 4.5 of \cite{Jar}. In the partial case when all spaces $E_n$ are Fr\'{e}chet, the strict inductive limit is called a {\em strict $(LF)$-space}. One of the most important examples of strict $(LF)$-spaces is  the space $\mathcal{D}(\Omega)$ of test functions over an open subset $\Omega$ of $\IR^n$. The strong dual $\mathcal{D}'(\Omega)$ of $\mathcal{D}(\Omega)$ is the space of distributions.

Let $p\in[1,\infty]$. Then $p^\ast$ is defined to be the unique element of $ [1,\infty]$ which satisfies $\tfrac{1}{p}+\tfrac{1}{p^\ast}=1$. For $p\in[1,\infty)$, the space $\ell_{p^\ast}$  is the dual space of $\ell_p$. We denote by $\{e_n\}_{n\in\w}$ the canonical unit basis of $\ell_p$, if $1\leq p<\infty$, or the canonical basis of $c_0$, if $p=\infty$. The canonical basis of $\ell_{p^\ast}$ is denoted by $\{e_n^\ast\}_{n\in\w}$. Denote by  $\ell_p^0$ and $c_0^0$ the linear span of $\{e_n\}_{n\in\w}$  in  $\ell_p$ or $c_0$ endowed with the induced norm topology, respectively.

One of the most important classes of locally convex spaces is the class of free locally convex spaces introduced by Markov in \cite{Mar}. The {\em  free locally convex space}  $L(X)$ over a Tychonoff space $X$ is a pair consisting of a locally convex space $L(X)$ and  a continuous map $i: X\to L(X)$  such that every  continuous map $f$ from $X$ to a locally convex space  $E$ gives rise to a unique continuous linear operator $\Psi_E(f): L(X) \to E$  with $f=\Psi_E(f) \circ i$. The free locally convex space $L(X)$ always exists and is essentially unique, and $X$ is the Hamel basis of $L(X)$. So, each nonzero $\chi\in L(X)$ has a unique decomposition $\chi =a_1 i(x_1) +\cdots +a_n i(x_n)$, where all $a_k$ are nonzero and $x_k$ are distinct. The set $\supp(\chi):=\{x_1,\dots,x_n\}$ is called the {\em support} of $\chi$. In what follows we shall identify $i(x)$ with $x$ and consider $i(x)$ as the Dirac measure $\delta_x$ at the point $x\in X$. We also recall that $C_p(X)'=L(X)$ and $L(X)'=C(X)$. The space $L(X)$ has the Glicksberg property for every Tychonoff space $X$, and if $X$ is non-discrete, then $L(X)$ is not a Mackey space, see \cite{Gab-Respected} and \cite{Gabr-L(X)-Mackey}, respectively.



We shall use the next assertion, see Proposition 6.5 of \cite{Gab-Pel}.
\begin{proposition} \label{p:Lp-Schur}
Let $1< p<\infty$, and let $q\in[1,\infty]$. Then:
\begin{enumerate}
\item[{\rm(i)}] if $q<p^\ast$, then $\ell_p$ has the $q$-Schur property;
\item[{\rm(ii)}] if $q\geq p^\ast$, then $\ell_p^0$ does not have the $q$-Schur property;
\end{enumerate}
\end{proposition}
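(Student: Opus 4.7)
The plan is to prove (i) by analysing the evaluation operator $\chi \mapsto (\langle\chi, x_n\rangle)_{n\in\w}$ via Pitt's theorem, and to prove (ii) by exhibiting the canonical unit basis as an explicit witness.

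For (i), fix a weakly $q$-summable sequence $\{x_n\}_{n\in\w}$ in $\ell_p$; the hypothesis $q < p^\ast$ together with $1 < p < \infty$ forces $1 \le q < p^\ast < \infty$. Using $(\ell_p)' = \ell_{p^\ast}$, the assignment $T(\chi) := \bigl(\langle\chi, x_n\rangle\bigr)_{n\in\w}$ defines a well-defined linear map $T \colon \ell_{p^\ast} \to \ell_q$, and a routine closed graph argument shows that $T$ is bounded. Pitt's theorem (every bounded operator from $\ell_r$ into $\ell_s$ with $1 \le s < r < \infty$ is compact) then yields that $T$ is compact, and Schauder's theorem in turn gives that the adjoint $T^\ast \colon (\ell_q)' \to (\ell_{p^\ast})' = \ell_p$ is compact as well. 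Writing $u_n$ for the $n$-th canonical unit vector of $(\ell_q)'$ (which equals $\ell_{q^\ast}$ if $q > 1$ and $\ell_\infty$ if $q = 1$), the identity $\langle T^\ast(u_n), \chi\rangle = \langle u_n, T(\chi)\rangle = \langle\chi, x_n\rangle$ valid for every $\chi \in \ell_{p^\ast}$ shows that $T^\ast(u_n) = x_n$. Consequently $\{x_n\}$ is the image of the bounded sequence $\{u_n\}$ under the compact operator $T^\ast$ and is therefore relatively norm-compact in $\ell_p$. Combined with the fact that $\{x_n\}$ is weakly null, a routine subsequence argument (any norm accumulation point is a weak limit point, hence zero) forces $\|x_n\|_p \to 0$, so $\ell_p$ has the $q$-Schur property.

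For (ii), the witness is the canonical basis $\{e_n\}_{n\in\w} \subseteq \ell_p^0$, which satisfies $\|e_n\|_p = 1$ and hence is not a null sequence. Since $\ell_p^0$ is norm-dense in $\ell_p$, Hahn--Banach extension gives $(\ell_p^0)' = \ell_{p^\ast}$, and for every $\chi = (\chi_k) \in \ell_{p^\ast}$ one has $\langle\chi, e_n\rangle = \chi_n$. For $q \in [p^\ast, \infty)$ the sequence $(\chi_n)_{n\in\w}$ belongs to $\ell_{p^\ast} \subseteq \ell_q$ (recall the inclusion $\ell_r \subseteq \ell_s$ for counting measure whenever $r \le s$, since the tail is eventually bounded by $1$), and for $q = \infty$ it lies in $c_0$ because $\chi_n \to 0$. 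Thus $\{e_n\}$ is weakly $q$-summable in $\ell_p^0$ without being null, proving the failure of the $q$-Schur property.

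The main external input is Pitt's theorem; once $T$ is known to be compact, Schauder duality plus the compact-image-plus-weakly-null argument is standard. The only point that deserves a brief comment is the case $q = 1$, where $(\ell_1)' = \ell_\infty$ is non-reflexive; this causes no trouble because compactness of $T^\ast$ alone is what is used, and we never need weak compactness of $B_{\ell_\infty}$.
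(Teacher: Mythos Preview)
Your proof is correct. Note that the paper does not actually prove this proposition here: it is quoted from \cite{Gab-Pel} (Proposition~6.5), so there is no ``paper's own proof'' to compare against in this article. That said, your argument stands on its own.

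The route you chose for (i), namely viewing the weakly $q$-summable sequence as the image under $T^\ast$ of the unit vectors in $(\ell_q)'$ and invoking Pitt's theorem to force compactness of $T\colon\ell_{p^\ast}\to\ell_q$ (and hence of $T^\ast$), is a clean and efficient approach. A more hands-on alternative would be to argue by contradiction: if $\|x_n\|_p\not\to 0$, pass to a subsequence bounded away from zero, use the weak nullness together with the Bessaga--Pe\l czy\'nski selection principle (or a gliding-hump argument) to find a further subsequence equivalent to the unit vector basis of $\ell_p$, and then exhibit a functional $\chi\in\ell_{p^\ast}$ for which $(\langle\chi,x_{n_k}\rangle)\notin\ell_q$. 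Your Pitt-based argument packages all of this into one stroke, at the cost of importing a nontrivial (though classical) theorem.

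Part (ii) is exactly the natural direct argument and there is nothing to add.
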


Other results and non-defined notions used in the paper will be taken from published articles, \cite{Gab-Pel} and the classical books \cite{Jar} and \cite{NaB}.


\section{Weakly (sequentially) $p$-angelic locally convex spaces} \label{subsec:p-angelic}


Recall that a Tychonoff space $X$  is called {\em Fr\'{e}chet--Urysohn} if for any cluster point $a\in X$ of a subset $A\subseteq X$ there is a sequence $\{ a_n\}_{n\in\w}\subseteq A$ which converges to $a$. Recall also that a Tychonoff space $X$ is called an {\em angelic space} if (1) every relatively countably compact subset of $X$ is relatively compact, and (2) any compact subspace of $X$ is Fr\'{e}chet--Urysohn. A natural class of locally convex spaces which are angelic in the weak topology (=weakly angelic) is given in the next assertion proved in  Lemma 2.8 of \cite{Gab-Pel}.
\begin{lemma} \label{l:angelic-strict-LF}
Let $E$ be a locally convex space whose closed bounded sets are weakly angelic {\rm(}for example, $E$ is a strict $(LF)$-space{\rm)}. Then $E$ is weakly angelic.
\end{lemma}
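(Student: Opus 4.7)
The plan is to verify directly the two defining conditions of an angelic space for $E_w$, reducing in each case to a weakly closed bounded subset of $E$ on which the hypothesis can be applied. First I would handle the relatively countably compact sets. Given $A\subseteq E$ relatively countably compact in $E_w$, I would argue $A$ is bounded: for any $\chi\in E'$, if $|\chi(a_n)|\to\infty$ along some sequence $\{a_n\}\subseteq A$ and $a$ is a weak cluster point of this sequence, then the weak neighbourhood $\{x:|\chi(x)-\chi(a)|<1\}$ forces $|\chi(a_n)|<|\chi(a)|+1$ infinitely often, a contradiction. Hence $A$ is weakly bounded and so bounded by Mackey's theorem. Putting $B:=\overline{A}^{\,w}$ produces a weakly closed bounded set; since every weak cluster point of a sequence in $A$ already lies in $B$, the set $A$ remains relatively countably compact inside $B$ with its induced weak topology. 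The hypothesis tells us $B$ is weakly angelic, so $A$ is relatively compact in $B$, and therefore in $E_w$.

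Next I would check the Fréchet--Urysohn condition. If $K\subseteq E_w$ is compact, then $K$ is weakly closed and weakly bounded, hence a closed bounded subset of $E$. By the hypothesis the weak topology on $K$ is angelic, and as a compact subspace of itself $K$ must then be Fréchet--Urysohn. For the parenthetical example, I would note that a strict $(LF)$-space $\SI E_n$ is regular, so every closed bounded subset $B$ of $E$ lies in some step $E_n$ and inherits its weak topology from $E_n$; since Fréchet spaces are weakly angelic by the classical Grothendieck--Pryce theorem, the hypothesis of the lemma is satisfied.

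The step I expect to require the most care is the boundedness of relatively countably compact sets in $E_w$, which combines the scalar cluster-point argument above with Mackey's theorem; everything else amounts to a routine transfer of relative countable compactness from $E_w$ to the weakly closed bounded set $B=\overline{A}^{\,w}$, where the assumption directly yields angelicity.
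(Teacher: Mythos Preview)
Your argument is correct. The paper does not actually prove this lemma; it merely states it and cites Lemma~2.8 of \cite{Gab-Pel}, so there is no in-paper proof to compare against. Your direct verification of the two angelicity conditions---reducing a relatively countably compact set $A$ to the weakly closed bounded set $B=\overline{A}^{\,w}$ (after the boundedness step via Mackey), and handling compact $K$ as a closed bounded set that is angelic by hypothesis---is the natural approach and goes through cleanly. For the parenthetical example, your use of regularity to land in a step $E_n$, together with the Hahn--Banach fact that $\sigma(E,E'){\restriction}_{E_n}=\sigma(E_n,E_n')$, is exactly what is needed to invoke weak angelicity of Fr\'echet spaces.
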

Note that any subspace of an angelic space is angelic, and a subset $A$ of an angelic space $X$ is (relatively) compact if and only if it is (relatively) countably compact if and only if $A$ is (relatively) sequentially compact, see Lemma 0.3 of \cite{Pryce}. Being motivated by the last property we introduced in \cite{Gabr-free-resp} a new class of  Tychonoff spaces:  $X$ is called {\em sequentially angelic} if a subset $K$ of $X$ is compact if and only if $K$ is sequentially compact. The class of sequentially angelic spaces is strictly wider than the class of angelic spaces (indeed, if $X$ is a countably compact space without infinite compact subsets, then $X$ is trivially sequentially angelic but not angelic). Below we introduce a weak $p$-version of (sequentially) angelic locally convex spaces.

\begin{definition} \label{def:p-seq-angelic} {\em
Let $p\in[1,\infty]$. A locally convex space $E$ is called
\begin{enumerate}
\item[$\bullet$] a {\em weakly sequentially $p$-angelic space} if the family of all relatively weakly sequentially $p$-compact sets in $E$ coincides with the family of all relatively weakly compact subsets of $E$;
\item[$\bullet$] a {\em weakly $p$-angelic space}  if it is  a weakly sequentially $p$-angelic space and each weakly compact subset  of $E$ is Fr\'{e}chet--Urysohn. \qed
\end{enumerate}    }
\end{definition}

Although the product of two angelic spaces can be not angelic, the class of  weakly sequentially $p$-angelic spaces is countably productive as the following assertion shows.
\begin{proposition} \label{p:weakly-p-ang-productive}
Let $p\in[1,\infty]$, and let $\{E_i\}_{i\in I}$ be a non-empty family of locally convex spaces.
\begin{enumerate}
\item[{\rm(i)}]  $E=\bigoplus_{i\in I} E_i$ is weakly sequentially $p$-angelic if and only if so are all summands $E_i$.
\item[{\rm(ii)}] If $I=\w$, then $E=\prod_{i\in \w} E_i$  is weakly sequentially $p$-angelic if and only if so are all factors $E_i$.
\end{enumerate}
\end{proposition}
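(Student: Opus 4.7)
The plan is to deduce the proposition directly from Lemma \ref{l:seq-p-comp-prod} combined with analogous (and much older) characterizations of relatively weakly compact sets in direct sums and countable products. In the direct-sum case $E=\bigoplus_{i\in I}E_i$, I will use the standard fact that any bounded (hence any relatively weakly compact) set has finite support, so the question reduces to a finite product; thus $A\subseteq E$ is relatively weakly compact iff $\supp(A)$ is finite and each $\pi_i(A)$ is relatively weakly compact in $E_i$. In the countable-product case $E=\prod_{i\in\w}E_i$, I will verify that the weak topology of the product coincides with the product of the weak topologies (both carry dual $\bigoplus_i E_i'$, and on a fixed finitely-supported functional the pairings agree), so Tychonoff's theorem yields: $A\subseteq E$ is relatively weakly compact iff every $\pi_i(A)$ is relatively weakly compact in $E_i$.

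Granted these two characterizations, both implications of (i) and (ii) split into symmetric verifications. For the necessity direction, I embed each $E_i$ as a weakly closed linear subspace of $E$ (zero outside coordinate $i$). The embedding is weak--weak continuous, and the projection $\pi_i$ is continuous, so Lemma \ref{l:image-p-seq-com} transports relative weak sequential $p$-compactness in both directions between $E_i$ and $E$; the corresponding transport for relative weak compactness is immediate from continuity. Hence if $E$ is weakly sequentially $p$-angelic, then so is every $E_i$: starting from $A\subseteq E_i$ in one family, lift to $E$, apply the hypothesis, and pull back via $\pi_i$.

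For the sufficiency direction, assume every $E_i$ is weakly sequentially $p$-angelic. If $A\subseteq E$ is relatively weakly compact, then the characterizations above give (in case (i), finite support and) each $\pi_i(A)$ relatively weakly compact in $E_i$, hence relatively weakly sequentially $p$-compact by hypothesis, hence $A$ is relatively weakly sequentially $p$-compact by Lemma \ref{l:seq-p-comp-prod}. Conversely, if $A$ is relatively weakly sequentially $p$-compact in $E$, Lemma \ref{l:seq-p-comp-prod} yields (finite support in case (i) and) $\pi_i(A)$ relatively weakly sequentially $p$-compact in each $E_i$, hence relatively weakly compact by hypothesis; the weak-compactness characterization in $E$ then gives $A$ relatively weakly compact.

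The only step that requires a small argument rather than a direct citation is the identification of the weak topology of $\prod_{i\in\w}E_i$ with the product of the weak topologies of the factors, which is where Tychonoff enters. I expect this to be the sole technical point: everything else is a mechanical book-keeping with Lemma \ref{l:seq-p-comp-prod} and Lemma \ref{l:image-p-seq-com}, and the argument for direct sums is even simpler because finite support immediately reduces to the finite-product situation where weak topologies and product-of-weak topologies literally agree.
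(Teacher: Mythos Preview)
Your proposal is correct and follows essentially the same route as the paper: reduce (i) to (ii) via finite support of bounded sets, identify $E_w=\prod_i (E_i)_w$ (the paper cites Jarchow, Theorem~8.8.5, rather than re-proving it), and then combine the Tychonoff-type characterization of relative weak compactness with Lemma~\ref{l:seq-p-comp-prod} and Lemma~\ref{l:image-p-seq-com} exactly as you describe. The only cosmetic difference is that the paper cites Lemma~\ref{l:image-p-seq-com} (applied to $\pi_i$) in one direction where you invoke the ``only if'' part of Lemma~\ref{l:seq-p-comp-prod}, which amounts to the same thing.
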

\begin{proof}
(i) Since relatively weakly compact subsets and relatively weakly sequentially $p$-compact subsets of $E$ are bounded, they sit in $\bigoplus_{i\in F} E_i$ for some finite subset $F$ of $I$. Therefore (i) follows from (ii).

(ii) Recall that $E_w=\prod_{i\in I} (E_i)_w$, see  Theorem 8.8.5 of \cite{Jar}.

Assume that $E$ is a  weakly sequentially $p$-angelic space. For every $i\in\w$, the space $E_i$ can be considered as a closed subspace of $E$. Let $i\in\w$. If $A$ is a relatively weakly compact subset of $E_i$, then $A$ is relatively weakly compact in $E$, and hence $A$ is a relatively weakly sequentially $p$-compact subset of $E$. Then, by Lemma \ref{l:image-p-seq-com}, the projection $A=\pi_i(A)$ of $A$ onto $E_i$ is relatively weakly sequentially $p$-compact. Conversely, if $A$ is a relatively weakly sequentially $p$-compact subset of $E_i$, Lemma \ref{l:image-p-seq-com} implies that $A$ is relatively weakly sequentially $p$-compact in $E$. Hence $A$ is  relatively weakly compact in $E$. Therefore $A$ is  relatively weakly compact in the closed subspace $(E_i)_w$ of $E_w$. Thus $E_i$ is a  weakly sequentially $p$-angelic space.

Assume now that all factors $E_i$ are weakly sequentially $p$-angelic. Let $A$ be a relatively weakly compact subset of $E$. Then for every $i\in\w$,  its projections $\pi_i(A)$ on $E_i$ is relatively weakly compact in $E_i$. Since $E_i$ is  weakly sequentially $p$-angelic, we obtain that $\pi_i(A)$ is relatively weakly sequentially $p$-compact. Hence, by Lemma \ref{l:seq-p-comp-prod}, the set $A$ is  relatively weakly sequentially $p$-compact. Conversely, if $A\subseteq E$ is  relatively weakly sequentially $p$-compact, then, by Lemma \ref{l:image-p-seq-com}, for every  $i\in\w$,  its projections $\pi_i(A)$ on $E_i$ is  relatively weakly sequentially $p$-compact. As $E_i$ is  weakly sequentially $p$-angelic, it follows that $\pi_i(A)$  is  relatively weakly compact in $E_i$. Therefore $\prod_{i\in\w} \pi_i(A)$ and hence also $A$ are relatively weakly compact in $E$. Thus $E$ is a weakly sequentially $p$-angelic space.\qed
\end{proof}

We select the following assertion.
\begin{proposition} \label{p:angelic-p-ang}
Every weakly angelic space $E$ is weakly $\infty$-angelic. Consequently, every strict $(LF)$-space being weakly angelic is a  weakly $\infty$-angelic space.
\end{proposition}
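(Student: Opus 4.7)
The plan is to observe that when $p=\infty$, the $p$-decorated notions of Definition \ref{def:p-seq-angelic} collapse to their classical counterparts. Indeed, a sequence $\{x_n\}_{n\in\w}$ is weakly $\infty$-summable precisely when $(\langle\chi,x_n\rangle)\in c_0$ for every $\chi\in E'$, i.e., precisely when it is weakly null. Consequently $\{x_n\}_{n\in\w}$ is weakly $\infty$-convergent to $x$ if and only if it is weakly convergent to $x$, and a subset $A\subseteq E$ is (relatively) weakly sequentially $\infty$-compact if and only if it is (relatively) weakly sequentially compact in the ordinary sense.

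With this identification, the first assertion reduces to the claim that in a weakly angelic space $E$, the relatively weakly compact subsets coincide with the relatively weakly sequentially compact subsets, and every weakly compact subset is Fr\'echet--Urysohn. Both facts follow immediately from the hypothesis that $E_w$ is angelic: the Fr\'echet--Urysohn clause is built into the definition of angelicity, while the equivalence between relative compactness and relative sequential compactness is Pryce's lemma cited just above Definition \ref{def:p-seq-angelic} (Lemma~0.3 of \cite{Pryce}), applied to subsets of the angelic space $E_w$.

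Finally, the consequence is obtained by combining the first statement with Lemma \ref{l:angelic-strict-LF}, which guarantees that every strict $(LF)$-space is weakly angelic.

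There is no real obstacle here: the entire content of the proposition is the recognition that the $\infty$-versions of the compactness notions introduced in Definition \ref{def:p-seq-angelic} are nothing but the classical sequential compactness notions, after which the conclusion is a direct application of the two facts recalled in the paragraph preceding Definition \ref{def:p-seq-angelic}.
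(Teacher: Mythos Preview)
Your proof is correct and follows essentially the same route as the paper's: both identify the $\infty$-case with the classical sequential notions, invoke Lemma~0.3 of \cite{Pryce} for the equivalence of relative weak compactness and relative weak sequential compactness in the angelic space $E_w$, note that weakly compact sets are Fr\'echet--Urysohn by angelicity, and conclude via Lemma~\ref{l:angelic-strict-LF}.
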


\begin{proof}
By definition $A$ is a relatively weakly sequentially $\infty$-compact subset of $E$ if and only if it is relatively  weakly sequentially compact, and hence, by the weak angelicity of $E$ and Lemma 0.3 of \cite{Pryce}, if and only if $A$ is relatively weakly compact. Therefore $E$ is a weakly sequentially $\infty$-angelic space. As $E$ is weakly angelic, each weakly compact subset  of $E$ is Fr\'{e}chet--Urysohn. Thus $E$ is a weakly $\infty$-angelic space.
The last assertion follows from Lemma \ref{l:angelic-strict-LF}.\qed
\end{proof}

The direct locally convex sum $\IF^{(\w)}$ of a countably-infinite family of the field $\IF$ is denoted by $\varphi$. It is well known that any bounded subset of $\varphi$ is finite-dimensional, see \cite{Jar}.

\begin{example} \label{exa:F-p-angelic}
The Fr\'{e}chet space $E=\IF^\w$ is weakly $p$-angelic for every $p\in[1,\infty]$.
\end{example}

\begin{proof}
Since $E$ is metrizable it is Fr\'{e}chet--Urysohn. As $E$ carries its weak topology and is complete, any relatively weakly sequentially $p$-compact set being precompact is relatively weakly compact. Conversely, let $A$ be a relatively weakly compact subset of $E$. To show that $A$ is relatively weakly sequentially $p$-compact, let $\{x_n\}_{n\in\w}$ be a sequence in $A$. Passing to a subsequence if needed we can assume that $x_n$ converges to some point $x\in E$. Now we proceed by induction. For $k=0$, choose $n_0\in\w$ such that $|x_n(0)-x(0)|< \tfrac{1}{2}$ for every $n\geq n_0$. Since $x_n$ converges to $x$ pointwise, one can find $n_k>n_{k-1}$ such that
\[
|x_n(j)-x(j)|< \tfrac{1}{2^{k+1}} \quad \mbox{ for every $n\geq n_k$ and each $j\leq k$}.
\]
Consider the subsequence $S=\{x_{n_k}\}_{k\in\w}$ of $\{x_n\}_{n\in\w}$. We prove that $S$ weakly $p$-converges to $x$ for $p=1$ and hence for every $p\in[1,\infty]$, which means that $A$ is  relatively weakly sequentially $p$-compact. By the choice of $S$, for every $i\in\w$ and each $k>i$, we have $|\langle e^\ast_i, x_{n_k}(i)-x(i)\rangle|<\tfrac{1}{2^{k+1}}$. Recall also that $E'=\varphi$. So, if $\chi=a_0 e^\ast_0 +\cdots+ a_j e^\ast_j\in E'$, then
\[
\sum_{k\in\w} |\langle\chi, x_{n_k}-x\rangle|\leq \sum_{i\leq j} \sum_{k\in\w} |a_i| \cdot|\langle e^\ast_i,  x_{n_k}(i)-x(i)\rangle| <\infty.
\]
Thus $S$ weakly $p$-converges to $x$.\qed
\end{proof}

Let $p\in[1,\infty]$. We shall say that a subset $A$ of an lcs $E$ is {\em weakly $p$-Fr\'{e}chet--Urysohn} if for any subset $B$ of $A$ and each $x\in \overline{B}^{\,\sigma(E,E')}$ there is a sequence $\{b_n\}_{n\in\w}$ in $B$ which weakly $p$-converges  to $x$.
The next lemma is an analogue of the aforementioned Lemma 0.3 of \cite{Pryce}.



\begin{lemma} \label{l:ws-p-angelic}
Let $p\in[1,\infty]$, and let $(E,\tau)$ be a weakly $p$-angelic space. Then:
\begin{enumerate}
\item[{\rm(i)}] a subset $A$ of $E$ is weakly  sequentially $p$-compact if and only if it is weakly compact, in this case $A$ is weakly $p$-Fr\'{e}chet--Urysohn;
\item[{\rm(ii)}] if $H$ is a linear subspace of $E$, then $H$ is weakly  $p$-angelic;
\item[{\rm(iii)}] if $\TTT$ is a locally convex vector topology on $E$ compatible with $\tau$, then the space $(E,\TTT)$ is a weakly  $p$-angelic space.
\end{enumerate}
\end{lemma}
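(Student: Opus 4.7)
The plan is to establish (i), (ii), (iii) in order, using throughout that weak $p$-convergence implies weak convergence (so weak limits are unique once they exist) together with the Fr\'echet--Urysohn clause built into the definition of weak $p$-angelicity.

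For (i), if $A$ is weakly sequentially $p$-compact then $A$ is in particular relatively weakly sequentially $p$-compact in $E$, hence relatively weakly compact by weak sequential $p$-angelicity, so $\overline{A}^{\,w}$ is weakly compact. To see $A=\overline{A}^{\,w}$, pick $x\in\overline{A}^{\,w}$, use Fr\'echet--Urysohn of the weakly compact set $\overline{A}^{\,w}$ to choose $\{a_n\}\subseteq A$ with $a_n\to x$ weakly, extract a weakly $p$-convergent subsequence with limit $y\in A$, and invoke uniqueness of weak limits to get $x=y\in A$. Conversely, a weakly compact $A$ is relatively weakly compact, hence relatively weakly sequentially $p$-compact, and weak closedness of $A$ keeps the weakly $p$-limits inside $A$. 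The weak $p$-Fr\'echet--Urysohn clause is the same argument applied to $B\subseteq A$ and $x\in\overline{B}^{\,w}$: Fr\'echet--Urysohn of $A$ yields $\{b_n\}\subseteq B$ weakly converging to $x$, and any weakly $p$-convergent subsequence of $\{b_n\}$ (guaranteed by the equivalence just proved) must weakly $p$-converge to $x$ by uniqueness of weak limits.

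For (ii), use $\sigma(H,H')=\sigma(E,E')|_H$, so weakly $p$-summable sequences in $H$ coincide with those weakly $p$-summable in $E$ that lie in $H$. The key step is to transfer (relative) weak (sequentially $p$-)compactness between $H$ and $E$, and both directions reduce to showing $\overline{A}^{\,E_w}\subseteq H$ when $A\subseteq H$. For weak compactness this is automatic, since $\overline{A}^{\,H_w}$ is weakly compact, hence weakly closed in $E_w$. For the sequentially $p$-compact direction, apply weak $p$-angelicity of $E$: $\overline{A}^{\,E_w}$ is weakly compact and hence Fr\'echet--Urysohn, so any $x\in\overline{A}^{\,E_w}$ is the weak limit of a sequence $\{a_n\}\subseteq A\subseteq H$; a weakly $p$-convergent subsequence of $\{a_n\}$ has its limit in $H$, and uniqueness of weak limits forces $x\in H$. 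Combined with the equivalence in $E$ given by (i), this yields the equivalence for $H$, and since Fr\'echet--Urysohn is hereditary to subspaces, weakly compact subsets of $H$ are Fr\'echet--Urysohn as well; thus $H$ is weakly $p$-angelic.

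For (iii), since $\TTT$ is compatible with $\tau$, the two topologies share the dual, the weak topology, the bounded sets, the weakly $p$-summable sequences, and the weakly compact sets, while the Fr\'echet--Urysohn property depends only on the weak topology; hence $(E,\TTT)$ inherits weak $p$-angelicity from $(E,\tau)$. The main obstacle is the closure argument in (ii) for subspaces $H$ that are not weakly closed: it is precisely the Fr\'echet--Urysohn clause of weak $p$-angelicity (and not merely weak sequential $p$-angelicity) that confines $\overline{A}^{\,E_w}$ to $H$ and makes the transfer work in both directions.
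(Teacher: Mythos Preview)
Your proposal is correct and follows essentially the same approach as the paper. For (i) and (iii) the arguments are identical, and for (ii) you package the transfer between $H$ and $E$ a bit more explicitly by isolating the inclusion $\overline{A}^{\,E_w}\subseteq H$ as the key step, but this is exactly what the paper does: it invokes $H_w\subseteq E_w$, uses the Fr\'echet--Urysohn property of $\overline{A}^{\,\sigma(E,E')}$ to approximate a point in the $E_w$-closure by a sequence from $A$, and then uses relative weak sequential $p$-compactness in $H$ together with uniqueness of weak limits to land back in $H$.
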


\begin{proof}
(i) Assume that $A$ is weakly  sequentially $p$-compact. Since $E$ is weakly sequentially $p$-angelic, $A$ is relatively weakly compact and hence the weak closure $\overline{A}^{\,w}$ of $A$ is a weakly compact subset of $E$. Therefore to show that $A$ is weakly compact we have to show that $\overline{A}^{\,w}=A$. Let $z\in\overline{A}^{\,w}$. By the definition of weak $p$-angelicity,  $\overline{A}^{\,w}$ is Fr\'{e}chet--Urysohn. Hence there is a sequence $\{x_n\}_{n\in\w}$ in $A$ which weakly converges to $z$. Since $A$ is weakly  sequentially $p$-compact, there exists a subsequence $\{x_{n_k}\}_{k\in\w}$ of  $\{x_n\}_{n\in\w}$  which weakly $p$-converges to a point $y\in A$. Taking into account that any weakly $p$-summable sequence is weakly null, it follows that $y=z$. Thus $\overline{A}^{\,w}=A$.

Conversely, assume that $A$ is a weakly compact subset of $E$. Since $E$ is weakly sequentially $p$-angelic it follows that $A$ is relatively weakly sequentially $p$-compact. Let $\{x_n\}_{n\in\w}$ be a sequence in $A$. Take a subsequence $\{x_{n_k}\}_{k\in\w}$ of  $\{x_n\}_{n\in\w}$  which weakly $p$-converges to a point $y\in E$. Since $A$ is weakly closed, we have $y\in A$. Thus $A$ is weakly  sequentially $p$-compact.

To show that $A$ is  weakly $p$-Fr\'{e}chet--Urysohn, let $B$ be a subset of $A$. Then $\overline{B}^{\,w}\subseteq A$. Let $z\in \overline{B}^{\,w}$. If $z\in B$, then the constant sequence $\{z\}$ weakly $p$-converges to $z$. If $z\not\in B$, then the Fr\'{e}chet--Urysohness of $A$ implies that there is a sequence $\{x_n\}_{n\in\w}$ in $B$ which weakly converges to $z$. Since $A$ is weakly  sequentially $p$-compact, there exists a subsequence $\{x_{n_k}\}_{k\in\w}$ of  $\{x_n\}_{n\in\w}$  which weakly $p$-converges to a point $y\in A$. It is clear that $y=z$. Thus $A$ is a weakly $p$-Fr\'{e}chet--Urysohn space.

(ii) Since $H_w$ is a subspace of $E_w$ (see Theorem 8.12.2 of \cite{NaB}), it suffices to show that $H$ is a weakly sequentially $p$-angelic space. Let $A$ be a relatively weakly sequentially $p$-compact subset of $H$. Since $H$ is a subspace of $E$, any weakly $p$-summable sequence in $H$ is also weakly $p$-summable in $E$. 
It follows that $A$ is a relatively  weakly sequentially $p$-compact subset of $E$. Since $E$ is weakly $p$-angelic, the weak closure $C:=\overline{A}^{\,\sigma(E,E')}$ of $A$ in the space $E$ is a weakly compact Fr\'{e}chet--Urysohn space. We show that $C\subseteq H$ which means that $A$ is a relatively weakly compact subset of $H$, as desired. To this end, let $z\in C$. Since $C$ is weakly Fr\'{e}chet--Urysohn, there is a sequence $\{x_n\}_{n\in\w}$ in $A$ which weakly converges (in $E$) to $z$. Since, by (i), $C$ is weakly  sequentially $p$-compact, there exists a subsequence $\{x_{n_k}\}_{k\in\w}$ of  $\{x_n\}_{n\in\w}$  which weakly $p$-converges to the point $z$. As $A$ is relatively weakly sequentially $p$-compact in $H$, it follows that $z\in H$. Thus $C\subseteq H$.

Conversely, assume that  $A$ is a relatively weakly compact subset of $H$. To show that $A$ is also relatively weakly sequentially $p$-compact, let $\{x_n\}_{n\in\w}$ be a sequence in $A$. Since the weak closure  $\overline{A}^{\,w}$ of $A$ in the space $H$ is weakly compact, the set  $\overline{A}^{\,w}$ is weakly compact also in $E$ and hence, by (i), $\overline{A}^{\,w}$ is weakly sequentially $p$-compact in $E$. Therefore there exists  a subsequence $\{x_{n_k}\}_{k\in\w}$ of $\{x_n\}_{n\in\w}$ which weakly $p$-converges (in $E$) to a point $z\in \overline{A}^{\,w}\subseteq H$. Hence, by the Hahn--Banach extension theorem, $\{x_{n_k}\}_{k\in\w}$ weakly $p$-converges to $z$ also in $H$. Thus $A$ is a relatively weakly sequentially $p$-compact subset of $H$.

(iii) follows from the fact that $(E,\TTT)_w=E_w$ (so that the property of being a weakly $p$-summable sequence and the property of being a (relatively) weakly compact set are the same for all compatible topologies).\qed
\end{proof}

\begin{proposition} \label{p:Lp-q-angelic}
Let $1< p<\infty$, and let $q\in[1,\infty]$. Then $\ell_p$ is weakly sequentially $q$-angelic if and only if it is weakly $q$-angelic if and only if $q\geq p^\ast$.
\end{proposition}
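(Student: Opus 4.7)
The plan is to peel off the angelicity clause first and then reduce everything to the behavior of the canonical basis $\{e_n\}$ of $\ell_p$, using Proposition \ref{p:Lp-Schur} on the $q$-Schur property together with a Bessaga--Pe{\l}czy\'nski-style block extraction.

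First I would observe that $\ell_p$ is a reflexive Banach space, so the Eberlein--\v{S}mulian theorem makes $\ell_p$ weakly angelic; in particular every weakly compact subset of $\ell_p$ is Fr\'echet--Urysohn. By Definition \ref{def:p-seq-angelic}, the Fr\'echet--Urysohn clause in the definition of ``weakly $q$-angelic'' is therefore automatic for $\ell_p$, so the notions ``weakly $q$-angelic'' and ``weakly sequentially $q$-angelic'' coincide. The task thus reduces to showing that $\ell_p$ is weakly sequentially $q$-angelic iff $q \geq p^\ast$.

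For sufficiency ($q \geq p^\ast$): since $\ell_p$ is reflexive, any relatively weakly compact $A \subseteq \ell_p$ is norm bounded, and by Eberlein--\v{S}mulian every sequence in $A$ has a weakly convergent subsequence. Subtracting the limit reduces the question to producing, from any weakly null bounded sequence $\{x_n\}_{n\in\w}$, a weakly $q$-summable subsequence. I would apply the Bessaga--Pe{\l}czy\'nski selection principle to extract either a norm-null subsequence, which can be thinned to a rapidly decaying tail (hence weakly $q$-summable for every $q\in[1,\infty]$), or a subsequence equivalent to a normalized block basic sequence $\{y_j = \sum_{k\in F_j}a_k e_k\}$ of $\{e_n\}$ with pairwise disjoint supports $F_j$. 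In the block case H\"older's inequality gives $|\langle\chi, y_j\rangle| \leq \bigl(\sum_{k\in F_j}|\chi_k|^{p^\ast}\bigr)^{1/p^\ast}$ for every $\chi = (\chi_k)\in \ell_{p^\ast} = \ell_p'$, and summing in $j$ over disjoint supports yields $\sum_j|\langle\chi,y_j\rangle|^{p^\ast} \leq \|\chi\|_{p^\ast}^{p^\ast} < \infty$. Hence $\{y_j\}$ is weakly $p^\ast$-summable; since $q \geq p^\ast$ forces $\ell_{p^\ast} \subseteq \ell_q$ (or $\ell_{p^\ast} \subseteq c_0$ when $q = \infty$), $\{y_j\}$ is also weakly $q$-summable.

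For necessity ($q < p^\ast$): I would test against the canonical basis $\{e_n\}_{n\in\w}$, which is norm bounded and hence relatively weakly compact by reflexivity. If $\ell_p$ were weakly sequentially $q$-angelic, some subsequence $\{e_{n_k}\}_{k\in\w}$ would weakly $q$-converge to a point $y\in\ell_p$; since $\{e_n\}$ is weakly null, $y = 0$ and $\{e_{n_k}\}$ would be weakly $q$-summable. But Proposition \ref{p:Lp-Schur}(i) asserts that $\ell_p$ has the $q$-Schur property when $q < p^\ast$, which forces $\|e_{n_k}\|\to 0$ --- contradicting $\|e_n\| = 1$.

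The only non-routine step is the Bessaga--Pe{\l}czy\'nski block extraction at the heart of sufficiency; once one has a subsequence equivalent to a disjointly supported block of $\{e_n\}$, the weak $p^\ast$-summability follows from a one-line H\"older estimate, and the necessity half is an immediate application of the already recorded Proposition \ref{p:Lp-Schur}.
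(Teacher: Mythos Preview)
Your argument is sound in outline and parallels the paper's proof closely, but one step in the sufficiency direction is underspecified. You show via H\"older that the \emph{block} sequence $\{y_j\}$ is weakly $p^\ast$-summable, whereas what you must conclude is that the extracted \emph{subsequence} of $\{x_n-x\}$ is weakly $q$-summable; mere basic-sequence equivalence does not transfer weak $p^\ast$-summability automatically, since the latter is tested against all of $\ell_{p^\ast}$, not just functionals on the closed span. The fix is standard: the Bessaga--Pe\l czy\'nski extraction can be arranged so that $\sum_j\|(x_{n_j}-x)-y_j\|<\infty$, and then the residual sequence is weakly $1$-summable, hence the subsequence inherits weak $p^\ast$-summability from $\{y_j\}$. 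The paper handles this point differently, quoting a result of Albiac--Kalton that produces an isomorphism $R$ from the closed span onto $\ell_p$ with $R(x_{n_k}-x)=e_k$ and with complemented range, and then transferring the estimate via $R^\ast$; your direct H\"older computation on disjoint blocks is a cleaner alternative once the perturbation is made explicit. You also omit the (one-line) reverse inclusion---that a relatively weakly sequentially $q$-compact set is relatively weakly sequentially compact and hence, by Eberlein--\v{S}mulian, relatively weakly compact---which the paper does record.

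For necessity your route through Proposition~\ref{p:Lp-Schur} is a genuine simplification over the paper, which instead constructs for each subsequence $\{e_{n_k}\}$ an explicit $\chi\in\ell_{p^\ast}\setminus\ell_q$ supported on $\{n_k\}$ to witness $\sum_k|\langle\chi,e_{n_k}\rangle|^q=\infty$.
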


\begin{proof}
Since $\ell_p$ is reflexive and separable, every its weakly compact subset is even metrizable. Therefore $\ell_p$ is weakly sequentially $q$-angelic if and only if it is weakly $q$-angelic.

Assume that $\ell_p$ is a weakly sequentially $q$-angelic space. Consider two possible cases.

{\em Case 1. Assume that $q<p^{\ast}$. We show that $\ell_p$ is not weakly sequentially $q$-angelic.} Consider the standard unit basis $\{e_n\}_{n\in\w}$ in $\ell_p$. Then $e_n\to 0$ in the weak topology. Therefore to prove that $\ell_p$ is not weakly sequentially $q$-angelic, it suffices to show that $\{e_n\}_{n\in\w}$ does not have a weakly $q$-convergent subsequence.  To this end, taking into account that $\{e_n\}_{n\in\w}$ is weakly null, we show that for every its subsequence $\{e_{n_k}\}_{k\in\w}$ there is $\chi\in  \ell_{p^\ast}$ such that $\sum_{k\in\w} |\langle\chi,e_{n_k}\rangle|^q=\infty$. Take $\chi=(y_n)\in \ell_{p^\ast}\SM \ell_q$ such that $\supp(\chi)=\{n_k\}_{k\in\w}$. Then
\[
\sum_{k\in\w} |\langle\chi,e_{n_k}\rangle|^q=\sum_{k\in\w} |y_{n_k}|^q =\infty,
\]
as desired.
\smallskip

{\em Case 2. Assume that  $q\geq p^{\ast}$. We show that $\ell_p$ is weakly sequentially $q$-angelic.} If $K$ is a relatively  weakly sequentially $q$-compact subset of $\ell_p$, then it is  relatively weakly sequentially compact, and hence, by the Eberlein--\v{S}mulyan theorem, $K$ is  relatively weakly compact. Conversely, let $K$ be a  relatively weakly compact subset of $\ell_p$. Without loss of generality we can assume that $K=B_{\ell_p}$. Take an arbitrary sequence $\{x_n\}_{n\in\w}$ in $K$. Since, by the Eberlein--\v{S}mulyan theorem, $K$ is weakly sequentially compact, we can assume additionally that  $x_n$ weakly converges to some element $x\in K$. If $q=\infty$ we are done. Assume that $q<\infty$. To show that  $\{x_n\}_{n\in\w}$ has a weakly $q$-convergent subsequence we distinguish between two subcases.
\smallskip

{\em Subcase 2.1. The sequence  $\{x_n\}_{n\in\w}$ has a subsequence  $\{x_{n_k}\}_{k\in\w}$ which converges to $x$ in the norm topology.} Passing to a subsequence, we can assume that $\|x_{n_k}-x\|_{\ell_p} \leq \tfrac{1}{2^k}$. Then for every $\chi=(y_n)\in \ell_{p^\ast}$, we obtain 
\[
\sum_{k\in\w} |\langle\chi,x_{n_k}-x\rangle|^q\leq \|\chi\|_{\ell_{p^\ast}}^q \cdot \sum_{k\in\w} \|x_{n_k}-x\|_{\ell_p}^q \leq \|\chi\|_{\ell_{p^\ast}}^q \cdot \sum_{k\in\w} \tfrac{1}{2^{kq}} <\infty,
\]
which means that $\{x_{n_k}\}_{k\in\w}$ weakly $q$-converges to $x$, as desired.
\smallskip

{\em Subcase 2.2. The sequence  $\{x_n\}_{n\in\w}$ does note have a subsequence converging in the  norm topology.} Then there is $\e>0$ such that $\|x_{n}-x\|_{\ell_p}\geq \e$ for every $n\in\w$. By Proposition 2.1.3 of \cite{Al-Kal}, there are a basic subsequence $\{x_{n_k}-x\}_{k\in\w}$ of $\{x_{n}-x\}_{n\in\w}$ and a linear topological isomorphism $R: V:=\overline{\spn}\big( \{x_{n_k}-x\}_{k\in\w}\big) \to \ell_p$ such that
\begin{equation} \label{equ:Lp-weak-q-angelic-1}
R\big(x_{n_k}-x\big) =a_k e_k \quad \mbox{ for every $k\in\w$, where }\; a_k=\|x_{n_k}-x\|_{\ell_p},
\end{equation}
and such that $V$ is complemented in $\ell_p$. Note that
\[
\e\leq \|x_{n}-x\|_{\ell_p}\leq 2 \quad \mbox{ for every } \; k\in\w,
\]
and hence the map $Q:\ell_p\to \ell_p$ defined by $Q(\xi_k):= (a_k\xi_k)$, for $(\xi_k)\in \ell_p$, is a topological linear isomorphism. Therefore, replacing $R$ by $Q^{-1}\circ R$, we can assume that $a_k=1$ for every $k\in\w$.

Let $\chi\in (\ell_p)'=\ell_{p^\ast}$. Denote by $\eta$ the restriction of $\chi$ onto $V$, and let $\theta=(y_k)\in \ell_{p^\ast}$ be such that $R^\ast(\theta)=\eta$. Then (\ref{equ:Lp-weak-q-angelic-1}) and the inequality $q\geq p^{\ast}$ imply 
\[
\begin{aligned}
\sum_{k\in\w} |\langle\chi,x_{n_k}-x\rangle|^q & =\sum_{k\in\w} |\langle\eta,x_{n_k}-x\rangle|^q= \sum_{k\in\w} |\langle R^\ast(\theta),x_{n_k}-x\rangle|^q \\
& =\sum_{k\in\w} |\langle \theta,R\big(x_{n_k}-x\big)\rangle|^q=\sum_{k\in\w} |y_k|^q <\infty.
\end{aligned}
\]
Therefore  $\{x_{n_k}\}_{k\in\w}$ weakly $q$-converges to $x$.

The Subcases 2.1 and 2.2 show that $\ell_p$ is a weakly sequentially $q$-angelic space. \qed
\end{proof}

One of the most important properties of angelic spaces is the following: if $(X,\tau)$ is an angelic space and $\TTT$ is a topology on $X$ finer than $\tau$, then also the space $(X,\TTT)$ is angelic. The next example shows that in  general weakly $p$-angelic spaces do not have an analogous property.

\begin{example} \label{exa:non-hereditary-p-angelic}
Let $1<p<\infty$ and $1\leq q< p^\ast$. Then  the Banach space $\ell_{p}$ is not weakly $q$-angelic, however, $\ell_{p}$ endowed with the pointwise topology $\tau_p$ induced from $\IF^\w$ is a weakly $q$-angelic space.
\end{example}

\begin{proof}
The space $\ell_{p}$ is not a weakly $q$-angelic by Proposition \ref{p:Lp-q-angelic}. On the other hand, the space $\big(\ell_{p},\tau_p\big)$ is a weakly $q$-angelic by Example \ref{exa:F-p-angelic} and (ii) of Lemma \ref{l:ws-p-angelic}.\qed
\end{proof}


\section{Characterizations of Dunford--Pettis type properties} \label{sec:char-DP}


In the next lemma we summarize some properties of the Grothendieck topology $\tau_{\Sigma'}$ with respect to the original topology $\tau$ and the weak topology $\sigma(E,E')$. Recall that $\Sigma'(E')$ denotes the family of all absolutely convex, equicontinuous, weakly compact subsets of $E'_\beta$, and the Grothendieck topology $\tau_{\Sigma'}=\tau_{\Sigma'}(E)$ is the topology on $E$ of uniform convergence on the sets of $\Sigma'(E')$. Analogously, we denote by $\Sigma(E)$ the family of all absolutely convex and  weakly compact subsets of $E$.

Recall that a dense subspace $H$ of a locally convex space $E$ is called {\em large} if every bounded subset of $E$ is contained in the closure of a bounded subset of $H$ (for more information on large subspaces of $E$, see Chapter 8.3 of \cite{PB}).
\begin{lemma} \label{l:Sigma'}
Let $(E,\tau)$ be a locally convex space. Then:
\begin{enumerate}
\item[{\rm(i)}]  $\sigma(E,E')\subseteq \tau_{\Sigma'}\subseteq \tau$. In particular, $\tau_{\Sigma'}$ is compatible with $\tau$.
\item[{\rm(ii)}] $\sigma(E,E')=\tau_{\Sigma'}$ if and only if every $K\in\Sigma'(E')$ is finite-dimensional.
\item[{\rm(iii)}] $\tau_{\Sigma'}=\tau$ if and only if for every $U\in\Nn_0(E)$, the polar $U^\circ$ is a weakly compact subset of $E'_\beta$  if and only if for every $U\in\Nn_0(E)$, the polar $U^\circ$ is a weakly complete subset of $E'_\beta$.
\item[{\rm(iv)}] If $E$ is a semi-reflexive space, then $\tau_{\Sigma'}= \tau$.
\item[{\rm(v)}] If $E$ is a barrelled space and $H:=\big( E', \mu(E',E)\big)$, then $\tau_{\Sigma'}(H)=\mu(E',E)$.
\item[{\rm(vi)}] If $E$ is a normed space, then $\tau_{\Sigma'}=\tau$ if and only if the completion $\overline{E}$ of $E$ is a reflexive Banach space.
\item[{\rm(vii)}] If $H$ is a large subspace of $E$, then $(H,\tau_{\Sigma'}(H))$ is a subspace of $(E,\tau_{\Sigma'}(E))$.
\end{enumerate}
\end{lemma}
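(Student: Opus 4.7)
The plan is to prove the seven items mostly by direct applications of Alaoglu's theorem and the bipolar theorem, together with the definition of $\tau_{\Sigma'}$ as the topology of uniform convergence on $\Sigma'(E')$. For (i), any $\chi \in E'$ sits in some $U^\circ$ with $U\in\Nn_0(E)$, and the absolutely convex hull of $\{\pm\chi\}$ is finite-dimensional, hence lies in $\Sigma'(E')$; this gives $\sigma(E,E') \subseteq \tau_{\Sigma'}$. The opposite inclusion $\tau_{\Sigma'}\subseteq \tau$ follows because any $K\in \Sigma'(E')$ with $K \subseteq U^\circ$ satisfies $K^\circ \supseteq U^{\circ\circ}\supseteq U$ by the bipolar theorem, so each basic $\tau_{\Sigma'}$-neighborhood $K^\circ$ is a $\tau$-neighborhood of zero. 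Compatibility with $\tau$ is then automatic since $\sigma(E,E')$ and $\tau$ are compatible and $\tau_{\Sigma'}$ is sandwiched between them. For (ii), if $\tau_{\Sigma'}=\sigma(E,E')$, each polar $K^\circ$ ($K\in\Sigma'(E')$) is a weak neighborhood of zero, hence contains a finite intersection $\{x: |\chi_i(x)|\leq\varepsilon,\ i=1,\dots,n\}$; the bipolar theorem then forces $K \subseteq \spn(\chi_1,\dots,\chi_n)$, and the converse is immediate.

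The main work is in (iii). Assuming $\tau_{\Sigma'}=\tau$ and fixing a closed absolutely convex $U\in\Nn_0(E)$, some $K\in\Sigma'(E')$ satisfies $K^\circ \subseteq U$, and the bipolar theorem gives $U^\circ \subseteq K^{\circ\circ}=K$; since $U^\circ$ is weak$^*$-closed and the weak$^*$-topology is coarser than the weak topology of $E'_\beta$, $U^\circ$ is weakly closed in the weakly compact set $K$, hence weakly compact. Conversely, if every $U^\circ$ is weakly compact in $E'_\beta$, then $U^\circ \in \Sigma'(E')$ for every closed absolutely convex $U$, and $U = U^{\circ\circ}$ is a $\tau_{\Sigma'}$-neighborhood of zero, yielding $\tau\subseteq\tau_{\Sigma'}$. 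For the equivalence with weak completeness, Lemma \ref{l:polar-open} already asserts that $U^\circ$ is strongly complete; weak compactness trivially implies weak completeness, while the reverse implication will upgrade Alaoglu's weak$^*$-compactness to weak compactness via the polar-topology machinery of Theorem 3.2.4 of \cite{Jar}. This compactness-versus-completeness step is the subtlest point of the lemma. Assertion (iv) then drops out of (iii), since semi-reflexivity forces $\sigma(E',E) = \sigma(E',E'')$, so Alaoglu automatically makes every $U^\circ$ weakly compact in $E'_\beta$.

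For (v), barrelledness of $E$ yields $\tau = \beta(E,E')$, so by Mackey--Arens $H' = E$ and the strong dual $H'_\beta$ equals $(E,\tau)$, making weak compactness in $H'_\beta$ coincide with $\sigma(E,E')$-compactness. Equicontinuity on $H = (E',\mu(E',E))$ of a subset of $E$ amounts to being contained in an absolutely convex $\sigma(E,E')$-compact subset of $E$, so $\Sigma'(H')$ is precisely the family defining the Mackey topology, giving $\tau_{\Sigma'}(H) = \mu(E',E)$. Statement (vi) follows from (iii) applied to the unit ball: $U^\circ$ is the closed unit ball of the Banach dual $E' = (\overline{E})'$, and its weak compactness in the norm-induced strong topology is equivalent to reflexivity of $\overline{E}$.

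Finally, for (vii), the density of $H$ in $E$ gives $H' = E'$ via unique continuous extension, and the equicontinuous subsets of $H'$ and $E'$ coincide because polars in $H$ and $E$ of matching neighborhoods agree by continuity and density. Largeness of $H$ then yields $\beta(E',E) = \beta(E',H)$: every bounded $B\subseteq E$ lies in $\overline{B'}$ for some bounded $B'\subseteq H$, so $\sup_{B}|\chi| \leq \sup_{B'}|\chi|$ for all $\chi\in E'$ by continuity, yielding $\beta(E',E)\subseteq \beta(E',H)$, while the reverse inclusion is automatic since bounded subsets of $H$ are bounded in $E$. Hence $E'_\beta = H'_\beta$, their weakly compact subsets coincide, and $\Sigma'(H')=\Sigma'(E')$; the subspace topology induced on $H$ by $\tau_{\Sigma'}(E)$ is therefore exactly $\tau_{\Sigma'}(H)$. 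The principal obstacle throughout remains the compactness-versus-completeness equivalence in (iii).
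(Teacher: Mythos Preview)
Your argument is essentially correct and tracks the paper's proof closely in (i), (ii), (iv), (vi), and (vii). Two points merit comment.

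In (iii), your treatment of the direction ``$U^\circ$ weakly complete $\Rightarrow$ $\tau_{\Sigma'}=\tau$'' is muddled. You say the reverse implication ``will upgrade Alaoglu's weak$^*$-compactness to weak compactness via the polar-topology machinery of Theorem 3.2.4 of \cite{Jar},'' but that theorem (as the paper uses it in Lemma~\ref{l:polar-open}) produces \emph{completeness}, not compactness, and weak$^*$-compactness of $U^\circ$ plays no role in this step. The clean argument, which the paper gives, is: $U^\circ$ is strongly bounded in $E'_\beta$ (Lemma~\ref{l:polar-open}), hence weakly bounded, hence weakly \emph{precompact}; together with the hypothesis of weak completeness this yields weak compactness, so $U^\circ\in\Sigma'(E')$ and $U=U^{\circ\circ}$ is a $\tau_{\Sigma'}$-neighborhood. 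Your mention of ``$U^\circ$ is strongly complete'' from Lemma~\ref{l:polar-open} is a red herring --- the input needed from that lemma is boundedness, not completeness.

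In (v) you take a genuinely different route from the paper. The paper observes that $H'_\beta=E_\beta=E$ (barrelledness), whence $H''=E'=H$, so $H$ is semi-reflexive and (iv) applies directly. You instead compute $\Sigma'(H')$ explicitly: since $H'_\beta=(E,\tau)$, weak compactness in $H'_\beta$ is $\sigma(E,E')$-compactness, while equicontinuity for $H=(E',\mu(E',E))$ means containment in an absolutely convex $\sigma(E,E')$-compact set; hence $\Sigma'(H')$ is exactly the Mackey--Arens family and $\tau_{\Sigma'}(H)=\mu(E',E)$. Both arguments are short; the paper's is slicker, yours is more self-contained.
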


\begin{proof}
(i) The first inclusion $\sigma(E,E')\subseteq \tau_{\Sigma'}$ is clear. To show that $\tau_{\Sigma'}\subseteq \tau$, let $K\in \Sigma'(E')$. Then $K$ is equicontinuous and hence there is $U\in\Nn^c_0(E)$ such that $K\subseteq U^\circ$. Therefore $U=U^{\circ\circ}\subseteq K^\circ$. Thus $\tau_{\Sigma'}\subseteq \tau$.
\smallskip

(ii) Assume that   $\sigma(E,E')=\tau_{\Sigma'}$, and let $K\in\Sigma'(E')$. Then $K^\circ\in \tau_{\Sigma'}=\sigma(E,E')$, and hence there is a finite $F\subseteq E'$  such that $ F^\circ \subseteq K^\circ$. Therefore $K=K^{\circ\circ}$ is contained in $\spn(F)$. Thus $K$ is finite-dimensional.

Conversely, assume that every $K\in\Sigma'(E')$ is finite-dimensional. Then $K\subseteq \cacx(F)$ for some finite $F\subseteq E'$. Therefore $F^\circ \subseteq K^\circ$ and $K^\circ \in  \sigma(E,E')$, so $\tau_{\Sigma'}\subseteq \sigma(E,E')$. Thus, by (i), we obtain $\sigma(E,E')=\tau_{\Sigma'}$.
\smallskip

(iii) Assume that  $\tau_{\Sigma'}=\tau$, and let  $U\in\Nn_0(E)$. Without loss of generality we assume that $U=U^{\circ\circ}$. Choose $K\in\Sigma'(E')$ such that $K^\circ\subseteq U$. Then $U^\circ\subseteq K^{\circ\circ}=K$. Since, by the Alaoglu theorem, $U^\circ$ is weak$^\ast$ compact it follows that $U^\circ$ is a weakly closed subset of the weakly compact set $K$. Thus $U^\circ$ is weakly compact and hence weakly complete.

Assume now that for every $U\in\Nn_0(E)$, the polar $U^\circ$ is a weakly complete subset of $E'_\beta$. To show that $\tau_{\Sigma'}=\tau$, by (i), it suffices to prove that $\tau\subseteq \tau_{\Sigma'}$. Fix an arbitrary $U\in\Nn_0(E)$, we can assume that $U$ is absolutely convex and closed. Then, by Lemma \ref{l:polar-open}, $U^\circ$ is strongly bounded and hence weakly precompact in $E'_\beta$. Being also weakly complete, $U^\circ$ is weakly compact. Therefore $U^\circ\in \Sigma'(E')$ and hence $U=U^{\circ\circ}\in \tau_{\Sigma'}$.  Thus $\tau\subseteq \tau_{\Sigma'}$.
\smallskip

(iv) By (iii) it suffices to show that for every $U\in\Nn_0(E)$, the polar $U^\circ$ is a weakly compact subset of $E'_\beta$. Since $E$ is semi-reflexive, the weak topology of $E'_\beta$ is exactly $\sigma(E',E)$ and hence, by the Alaoglu theorem, $U^\circ$ is a weakly compact subset of $E'_\beta$. 
%
\smallskip

(v) Observe that $H'_\beta =E_\beta$. Since $E$ is barrelled, we also have $E=E_\beta$. Therefore $H''=E'=H$ algebraically, and hence $H$ is semi-refelexive. Now (iv) applies.
\smallskip

(vi) Assume that $\tau_{\Sigma'}=\tau$. Then there is $K\in\Sigma'(E')$ such that $K^\circ\subseteq B_E$. Therefore $B^\circ_E \subseteq K^{\circ\circ}=K$ and hence the closed unit ball $B^\circ_E$ of the Banach space $E'_\beta$ is weakly compact. Thus, by Theorem 3.111 and Proposition 3.112 of \cite{fabian-10}, the Banach space $\overline{E}$ is reflexive.

Conversely, assume that the Banach space $\overline{E}$ is reflexive. Then, by Theorem 3.111 and Proposition 3.112 of \cite{fabian-10}, $B^\circ_E =B^\circ_{\overline{E}}$ is weakly compact. Therefore $B^\circ_E\in \Sigma(E'_\beta)$. Since $B^\circ_E$ is equicontinuous we obtain $B^\circ_E\in \Sigma'(E')$ and hence $B_E= B_E^{\circ\circ}\in \tau_{\Sigma'}$, therefore $\tau\subseteq \tau_{\Sigma'}$. By (i), $\tau_{\Sigma'}\subseteq \tau$. Thus $\tau_{\Sigma'}=\tau$.
\smallskip

(vii) Taking into account that $H'=E'$ algebraically, to show that $(H,\tau_{\Sigma'}(H))$ is a subspace of $(E,\tau_{\Sigma'}(E))$ it suffices to prove that $\Sigma'(H')=\Sigma'(E')$. Recall that, by Proposition 2.4 of \cite{Gab-Pel}, 
we have $E'_\beta=H'_\beta$.

Let $K\in\Sigma'(E')$, so $K$ is an absolutely convex and weakly compact subset of $H'_\beta=E'_\beta$. Take $U\in\Nn_0^c(E)$ such that $K\subseteq U^\circ$. Set $V:= U\cap H$. Then $V\in\Nn_0^c(H)$ and $V$ is dense in $U$. Therefore $V^\circ=U^\circ$ and hence $K\subseteq V^\circ$. Thus $K$ is  equicontinuous and hence $K\in \Sigma'(H')$.

Conversely, let $K\in \Sigma'(H')$. Then $K$ is an absolutely convex and weakly compact subset of $E'_\beta=H'_\beta$. Take $V\in\Nn_0^c(H)$ such that $K\subseteq V^\circ$. Set $U:= \overline{V}^E$. Then $U\in\Nn_0^c(E)$, and the density of $H$ in $E$ implies $U^\circ=V^\circ$. Therefore $K\subseteq U^\circ$ is equicontinuous. Thus $K\in \Sigma'(E')$.\qed
\end{proof}

The equivalence of the conditions (i)-(iii) in the next theorem are exactly the conditions (DP$_1$)-(DP$_3$) from \cite[p.633]{Edwards} which, following Grothendieck  \cite{Grothen}, define the $DP$ property in locally convex spaces. For reader's convenience and for the sake of completeness of the article we recall the proof of (i)-(vii).

\begin{theorem} \label{t:def-DP}
For a locally convex space $(E,\tau)$  the following conditions are equivalent:
\begin{enumerate}
\item[{\rm(i)}] if $T$ is an operator from $E$ into a locally convex space $L$ that transforms bounded sets into relatively weakly compact sets, then $T(A)$  is precompact in $L$ for every $A\in\Sigma(E)$;
\item[{\rm(ii)}] as in {\rm(i)}, but $L$ is assumed to be a Banach space; that is, $E$ has the $DP$ property;
\item[{\rm(iii)}] each $A\in\Sigma(E)$ is precompact for the Grothendieck topology $\tau_{\Sigma'}$;
\item[{\rm(iv)}] each $B\in \Sigma'(E')$ is precompact for $\mu(E',E)$;
\item[{\rm(v)}] if $T$ is an operator from $E$ into a locally convex space $L$ that transforms bounded sets into relatively weakly compact sets, then $T(A)$  is compact in $L$ for every $A\in\Sigma(E)$;
\item[{\rm(vi)}] as in {\rm(v)}, but $L$ is assumed to be a Banach space;
\item[{\rm(vii)}] each $A\in\Sigma(E)$ is  compact for the  Grothendieck topology $\tau_{\Sigma'}$;
\item[{\rm(viii)}] the space $\big(E,\tau_{\Sigma'}\big)$ has the weak Glicksberg property.
\end{enumerate}
\end{theorem}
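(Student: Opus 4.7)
The plan is to split the eight equivalences into three tightly linked groups and connect them via two nontrivial implications. First, the trivial reductions: (i)$\Rightarrow$(ii), (v)$\Rightarrow$(vi), (vii)$\Rightarrow$(iii), (v)$\Rightarrow$(i) and (vi)$\Rightarrow$(ii) hold because Banach spaces are lcs and compact implies precompact. The equivalence (vii)$\Leftrightarrow$(viii) is immediate from the definition of the weak Glicksberg property applied to $(E,\tau_{\Sigma'})$: by Lemma \ref{l:Sigma'}(i), $\tau_{\Sigma'}$ is compatible with $\tau$, so the weak topology of $(E,\tau_{\Sigma'})$ coincides with $\sigma(E,E')$, whence absolutely convex $\tau_{\Sigma'}$-weakly compact sets coincide with $\Sigma(E)$, and (viii) asserts precisely (vii). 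The equivalence (iii)$\Leftrightarrow$(iv) is standard polar duality via the bipolar theorem, exploiting the symmetric roles played by $\Sigma(E)$ and $\Sigma'(E')$ in defining the respective pseudonorms on $E$ and $E'$.

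The central nontrivial implication is (ii)$\Rightarrow$(iii). Fix $K\in\Sigma'(E')$ and equip it with the induced topology $\sigma(E'_\beta,E'')$, under which $K_w$ is a compact Hausdorff space. Define $T\colon E\to C(K_w)$ by $T(x)(\chi):=\langle\chi,x\rangle$; since $\sigma(E',E'')$ refines $\sigma(E',E)$, each $T(x)$ is continuous on $K_w$, and equicontinuity of $K$ (so $K\subseteq U^\circ$ for some $U\in\Nn_0(E)$) makes $T$ a continuous operator into the Banach space $C(K_w)$ with sup-norm. For every bounded $B\subseteq E$, the image $T(B)$ is uniformly bounded on $K_w$ (since $K$ is $\beta(E',E)$-bounded) and pointwise bounded; by Grothendieck's classical criterion for weak compactness in $C(K)$-spaces, $T(B)$ is relatively weakly compact. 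Applying the $DP$ property (ii) yields norm-precompactness of $T(A)$ for each $A\in\Sigma(E)$, which is exactly $\tau_{\Sigma'}$-precompactness against $K$; as $K\in\Sigma'(E')$ was arbitrary, (iii) follows.

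For (iii)$\Rightarrow$(i), given $T\colon E\to L$ as hypothesized and $V\in\Nn_0^c(L)$, set $K:=T^\ast(V^\circ)$. Absolute convexity of $K$ is clear, and equicontinuity follows from $K\subseteq (T^{-1}(V))^\circ$; the crux is weak compactness of $K$ in $E'_\beta$, and this is the Gantmacher-type duality step where the hypothesis that $T$ sends bounded sets to relatively weakly compact sets enters essentially. Alaoglu provides $\sigma(L',L)$-compactness of $V^\circ$ and $T^\ast$ is $\sigma(L',L)$-$\sigma(E',E)$-continuous, so $K$ is $\sigma(E',E)$-compact for free; the upgrade to $\sigma(E',E'')$-compactness is the heart of the matter. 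Once $K\in\Sigma'(E')$ is secured, (iii) applied to $A$ against $K$ delivers a finite $F\subseteq A$ with $T(A)\subseteq T(F)+V$. The bridge (iii)$\Rightarrow$(vii) is then obtained as follows: given $A\in\Sigma(E)$ and a net $\{x_\alpha\}\subseteq A$, weak compactness extracts a subnet $x_{\alpha_\beta}\to x\in A$ weakly, and (iii) permits a further $\tau_{\Sigma'}$-Cauchy refinement $\{x_{\alpha_{\beta_\gamma}}\}$; for each $K\in\Sigma'(E')$ and $\e>0$, Cauchyness yields
\[
\sup_{\chi\in K}|\langle\chi,x_{\alpha_{\beta_\gamma}}-x_{\alpha_{\beta_{\gamma'}}}\rangle|<\e \quad \text{for $\gamma,\gamma'$ large,}
\]
and sending $\gamma'\to\infty$ via pointwise weak convergence $\langle\chi,x_{\alpha_{\beta_{\gamma'}}}\rangle\to\langle\chi,x\rangle$ gives $\sup_{\chi\in K}|\langle\chi,x_{\alpha_{\beta_\gamma}}-x\rangle|\leq\e$, so $x_{\alpha_{\beta_\gamma}}\to x$ in $\tau_{\Sigma'}$. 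Finally, (i)$\Rightarrow$(v) and (ii)$\Rightarrow$(vi) follow because $T(A)$, as a weakly continuous image of the weakly compact $A$, is weakly compact, hence weakly closed, hence closed in $\tau_L$; then a $\tau_L$-Cauchy ultrafilter on the precompact set $T(A)$ converges in the completion and its limit must agree with the weak limit in $T(A)$, yielding $\tau_L$-compactness.

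The main obstacle is the Gantmacher-type step in (iii)$\Rightarrow$(i): promoting $\sigma(E',E)$-compactness of $T^\ast(V^\circ)$ to $\sigma(E',E'')$-compactness, i.e., certifying that $T^\ast(V^\circ)\in\Sigma'(E')$. Without this upgrade the reduction to (iii) collapses, and it is precisely here that the hypothesis "bounded sets into relatively weakly compact sets" is used in a non-formal way.
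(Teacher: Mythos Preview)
Your overall architecture is sound and close in spirit to the classical development, but there are two genuine gaps, one of which you have not flagged.

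\textbf{The unflagged gap: (ii)$\Rightarrow$(iii).} You assert that for bounded $B\subseteq E$ the set $T(B)$ is relatively weakly compact in $C(K_w)$ ``by Grothendieck's classical criterion'' because it is uniformly bounded. This is false as stated: boundedness in $C(K)$ never suffices for relative weak compactness (take $K=[0,1]$ and $B$ the unit ball). Grothendieck's criterion requires that every pointwise cluster point of $T(B)$ in $\IF^K$ be continuous on $K_w$. The missing ingredient is precisely that $K$ is $\sigma(E',E'')$-compact, not merely $\sigma(E',E)$-compact: for bounded $B$, the bipolar $B^{\circ\circ}\subseteq E''$ is $\sigma(E'',E')$-compact by Alaoglu, the evaluation map $E''\to C(K_w)$ is well-defined (elements of $E''$ are $\sigma(E',E'')$-continuous) and $\sigma(E'',E')$-to-$\tau_p$ continuous, so the $\tau_p$-closure of $T(B)$ sits inside the image of $B^{\circ\circ}$ and hence consists of continuous functions. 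Without this step your operator $T$ need not satisfy the hypothesis of (ii) at all.

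\textbf{The flagged gap: (iii)$\Rightarrow$(i).} You correctly isolate the Gantmacher step but do not prove it. The statement you need is: if $T$ sends bounded sets to relatively weakly compact sets, then $T^\ast\colon(L',\sigma(L',L))\to(E',\sigma(E',E''))$ is continuous, hence $T^\ast(V^\circ)\in\Sigma'(E')$. This follows from the equivalence ``$T(B)$ relatively weakly compact for all bounded $B$'' $\Leftrightarrow$ ``$T^{\ast\ast}(E'')\subseteq L$'', which is the lcs version of Gantmacher's theorem (see the paper's appendix Theorem after the bibliography, or Edwards~9.3.1). Note this is essentially the \emph{same} fact you need for (ii)$\Rightarrow$(iii), viewed from the dual side.

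\textbf{Minor point.} Your (iii)$\Leftrightarrow$(iv) via ``standard polar duality'' is too glib; this is the content of Edwards' Theorem~9.2.1 (precompactness duality for polar topologies), not a bare bipolar computation.

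\textbf{Comparison with the paper.} The paper does not argue from scratch either: it invokes Edwards~9.3.4 for (i)--(iv), the remark thereafter for (v)--(vii), and then gives the short bridges (ii)$\Leftrightarrow$(vi) and (vii)$\Leftrightarrow$(viii) directly. Your route via the $C(K_w)$ embedding is a legitimate alternative for (ii)$\Rightarrow$(iii), and your (iii)$\Rightarrow$(vii) argument (precompact $+$ weakly compact $\Rightarrow$ compact in a coarser compatible topology) is cleaner than citing a remark; but both your nontrivial implications ultimately rest on the same Gantmacher-type fact that the paper packages inside the Edwards reference.
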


\begin{proof}
The conditions (i)-(iv) are equivalent by Theorem 9.3.4 of \cite{Edwards} applied to $\mathfrak{S}=\Sigma(E)$ 
and taking into account that, by the Mackey--Arens theorem, the topology $\TTT_{\Sigma}$ on $E'$ is the Mackey topology $\mu(E',E)$. The conditions (v)-(vii) are equivalent by Remark after the proof of Theorem 9.3.4 of  \cite{Edwards} 
because the family $\Sigma(E)$ contains weakly compact sets.

(ii)$\Ra$(vi) Let $A\in\Sigma(E)$. By (ii), $T(A)$ is precompact in the Banach space $L$. On the other hand, since $A$ is weakly compact, $T(A)$ is weakly compact in $L$. Therefore $T(A)$  is a closed subset of $L$. As $L$ is complete, it follows that $T(A)$ is a compact subset of $L$.

(vi)$\Ra$(ii) is trivial.

(vii)$\Ra$(viii) Let $A$ be a weakly compact, absolutely convex subset of  $\big(E,\tau_{\Sigma'}\big)$. Since, by Lemma \ref{l:Sigma'}, $\tau_{\Sigma'}$ is compatible with the topology $\tau$ of $E$, the set $A$ belongs to $\Sigma(E)$. Therefore, by (vii), $A$ is compact for the topology $\tau_{\Sigma'}$. Thus $\big(E,\tau_{\Sigma'}\big)$ has the weak Glicksberg property.

(viii)$\Ra$(vii) Let $A\in \Sigma(E)$. Since, by Lemma \ref{l:Sigma'}, $\tau_{\Sigma'}$ and $\tau$ are compatible, $A$ is a weakly compact, absolutely convex subset of  $\big(E,\tau_{\Sigma'}\big)$. By  the weak Glicksberg property, we obtain that $A$ is a compact subset of $E$ for $\tau_{\Sigma'}$.\qed
\end{proof}


It is well known that a Banach space $X$ has the $DP$ property if and only if every $\sigma(X',X'')$-compact subset of  $X'$ is $\mu(X',X)$-compact. Below we generalize this fact.

\begin{corollary} \label{c:DP-quasi}
A quasibarrelled space $E$ has the $DP$ property if and only if every $\sigma(E',E'')$-compact subset of $E'$ is $\mu(E',E)$-compact.
\end{corollary}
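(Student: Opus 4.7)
The reverse direction follows immediately from Theorem~\ref{t:def-DP}(iv): any $B\in\Sigma'(E')$ is by definition $\sigma(E',E'')$-compact, so the hypothesis makes it $\mu(E',E)$-compact, hence $\mu(E',E)$-precompact, which by (iv) yields the $DP$ property. No quasibarrelledness is needed here.

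For the forward direction, let $K\subseteq E'$ be $\sigma(E',E'')$-compact. The first step is to show $K$ is equicontinuous. Since $\sigma(E',E'')$ is exactly the weak topology of $E'_\beta$, $K$ is $\sigma(E',E'')$-bounded, hence $\beta(E',E)$-bounded by Mackey's theorem applied in $E'_\beta$. Quasibarrelledness then gives $K\subseteq U^\circ$ for some $U\in\Nn_0(E)$, and by Lemma~\ref{l:polar-open} the set $U^\circ$ is absolutely convex, bounded, and $\beta(E',E)$-complete. Next, I would form $\widetilde K:=\overline{\acx(K)}^{\,\sigma(E',E'')}$, which lies in $U^\circ$. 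Applying a localized form of Krein's theorem inside the Banach disk $U^\circ$ (the $\beta$-completeness of $U^\circ$ plays the role usually played by quasi-completeness of the ambient space), $\widetilde K$ is $\sigma(E',E'')$-compact. Hence $\widetilde K\in\Sigma'(E')$, and Theorem~\ref{t:def-DP}(iv) forces $\widetilde K$, and therefore $K\subseteq\widetilde K$, to be $\mu(E',E)$-precompact.

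The heart of the proof is upgrading precompactness to compactness. I would establish as a separate observation that every absolutely convex $\sigma(E',E)$-compact subset $C$ of $E'$ is $\mu(E',E)$-\emph{complete}. Given a $\mu(E',E)$-Cauchy net $(y_\alpha)\subseteq C$, it is in particular $\sigma(E',E)$-Cauchy, so by $\sigma$-compactness some subnet $\sigma(E',E)$-converges to a point $x\in C$. For each $A\in\Sigma(E)$ and each $\e>0$, the $\mu$-Cauchy property produces $\alpha_0$ with $\sup_{a\in A}|y_\alpha(a)-y_\beta(a)|\le\e$ for $\alpha,\beta\ge\alpha_0$; fixing $\alpha\ge\alpha_0$ and passing to the limit along the subnet gives $\sup_{a\in A}|y_\alpha(a)-x(a)|\le\e$, so $y_\alpha\to x$ uniformly on $A$, i.e.\ in $\mu(E',E)$. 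Applied to $C=\widetilde K$ (which is $\sigma(E',E)$-compact since $\sigma(E',E)\subseteq\sigma(E',E'')$), this combined with $\mu$-precompactness gives that $\widetilde K$ is $\mu(E',E)$-compact. Finally, $K$ is $\sigma(E',E)$-compact, hence $\sigma(E',E)$-closed, hence $\mu(E',E)$-closed (as $\mu(E',E)\supseteq\sigma(E',E)$); being a $\mu$-closed subset of the $\mu$-compact $\widetilde K$, the set $K$ is $\mu(E',E)$-compact, as required.

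The principal technical obstacle is the Krein step producing $\widetilde K\in\Sigma'(E')$, because $E'_\beta$ need not be quasi-complete when $E$ is only quasibarrelled; the remedy is to localize Krein's theorem inside the Banach disk $U^\circ$ furnished by Lemma~\ref{l:polar-open}. The completeness argument in the third paragraph, while short, is what turns the ``precompact'' in Theorem~\ref{t:def-DP}(iv) into the ``compact'' demanded by the corollary, and it is the reason the statement is phrased for quasibarrelled spaces (the equicontinuity of $K$ is what drives the whole chain).
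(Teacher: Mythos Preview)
Your argument follows the same architecture as the paper's proof: for the forward direction you pass to the closed absolutely convex hull $\widetilde K$ of $K$, place it in $\Sigma'(E')$ via a Krein-type step, invoke Theorem~\ref{t:def-DP}(iv) to get $\mu(E',E)$-precompactness, and then upgrade to compactness by a completeness argument. Your hand-written completeness lemma is exactly what the paper obtains by citing Theorem~3.2.4 of \cite{Jar}, so that part is fine.

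The one substantive misstep is your belief that ``$E'_\beta$ need not be quasi-complete when $E$ is only quasibarrelled'', which prompts the detour through a ``localized Krein theorem inside the Banach disk $U^\circ$''. In fact, for quasibarrelled $E$ the strong dual $E'_\beta$ \emph{is} quasi-complete (Proposition~11.2.3 of \cite{Jar}), and even $(E'_\beta)_w$ is quasi-complete (Proposition~11.2.4 of \cite{Jar}); this is precisely what the paper uses. Hence $E'_\beta$ has the Krein property outright, and $\widetilde K$ is $\sigma(E',E'')$-compact with no further work. Your localized Krein argument, as stated, is not fully justified: knowing that $K$ is $\sigma(E',E'')$-compact and sits in the Banach disk $U^\circ$ does not immediately give that $K$ is weakly compact in the Banach space $E_{U^\circ}$ (the weak topology of $E_{U^\circ}$ can be strictly finer than $\sigma(E',E'')$ restricted to $E_{U^\circ}$), so the Banach-space Krein--\v{S}mulian theorem is not directly available. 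Drop the detour and cite quasi-completeness of $E'_\beta$ directly; then your proof coincides with the paper's.
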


\begin{proof}
The sufficiency immediately follows from (iv) of Theorem \ref{t:def-DP}. To prove the necessity let $K$ be a $\sigma(E',E'')$-compact subset of $E'$.
Since $E$ is quasibarrelled, the space $(E'_\beta)_w$ is quasi-complete by Proposition 11.2.4 of \cite{Jar}, and hence $E'_\beta$ has the Krein property.
Therefore the absolutely convex closed hull $C$ of $K$ is a weakly compact subset of $E'_\beta$. Quasibarrelledness of $E$ implies that $C$ is equicontinuous, i.e., $C\in \Sigma'(E')$. Therefore, by  (iv) of Theorem \ref{t:def-DP}, $C$ is $\mu(E',E)$-precompact. As the weak topology of $E'_\beta$ is finer than $\sigma(E',E)$, it follows that $C$ and hence also $K$ are $\sigma(E',E)$-compact, and so $K$ is $\mu(E',E)$-complete by Theorem 3.2.4 of \cite{Jar}. Being complete and precompact $K$ is  $\mu(E',E)$-compact.\qed
\end{proof}

In \cite{Pelcz-62}  Pe{\l}czy\'{n}ski introduced and studied the property $V$ and the property $V^\ast$ for Banach spaces. These properties were generalized  by Chen, Ch\'{a}vez-Dom\'{\i}nguez, and Li in \cite{LCCD} for every $p\in[1,\infty]$. Being motivated by these articles we defined and study in \cite{Gab-Pel} the  Pe{\l}czy\'{n}ski's properties  $V_p$ and $V^\ast_p$ for locally convex spaces as follows. Let $1\leq p\leq q\leq\infty$, and let $E$ be a locally convex space. Then:
\begin{enumerate}
\item[$\bullet$] a subset $A$ of $E$ is called a {\em $(p,q)$-$(V^\ast)$ set} if
\[
\Big(\sup_{a\in A} |\langle \chi_n, a\rangle|\Big)\in \ell_q \; \mbox{ if $q<\infty$, } \; \mbox{ or }\;\; \Big(\sup_{a\in A} |\langle \chi_n, a\rangle|\Big)\in c_0 \; \mbox{ if $q=\infty$},
\]
for every weakly $p$-summable sequence $\{\chi_n\}_{n\in\w}$ in $E'_\beta$, $(p,\infty)$-$(V^\ast)$ sets are called {\em $p$-$(V^\ast)$ sets};
\item[$\bullet$] a subset  $B$ of $E'$ is called a {\em $p$-$(V)$ set} if
$
\lim_{n\to\infty} \sup_{\chi\in B} |\langle\chi,x_n\rangle|=0\;
$
 for every weakly $p$-summable sequence $\{x_n\}_{n\in\w}$ in $E$,
\end{enumerate}
and the space $E$ is said to have
\begin{enumerate}
\item[$\bullet$] the {\em property $V_p^\ast$} if every $p$-$(V^\ast)$ set in $E$ is relatively weakly compact,
\item[$\bullet$] the {\em property $V_p$} if every $p$-$(V)$ set in $E'_\beta$ is relatively weakly compact.
\end{enumerate}

\begin{lemma} \label{l:null-seq}
Let $\tau$ and $\TTT$ be two locally convex vector topologies on a vector space $E$ such that $\tau\subseteq \TTT$. If $S=\{x_n\}_{n\in\w}$ is $\tau$-null and $\TTT$-precompact, then $S$ is $\TTT$-null. Consequently, if $S$ is weakly $\TTT$-null and $\TTT$-precompact, then $S$ is $\TTT$-null.
\end{lemma}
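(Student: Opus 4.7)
The plan is to prove the lemma by contradiction, producing a cluster point of $\{x_{n}\}$ in the completion of $(E,\TTT)$ which is simultaneously nonzero (by $\TTT$-precompactness) and annihilated by every $\tau$-continuous linear functional (by $\tau$-nullity). I would first suppose $(x_n)$ does not $\TTT$-converge to $0$ and select a closed absolutely convex $\TTT$-neighborhood $U$ of $0$ together with a subsequence $\{x_{n_k}\}$ lying outside $U$. Let $\hat{E}$ denote the $\TTT$-completion of $E$. Since $\TTT$-precompactness passes to subsequences, the closure $K:=\overline{\{x_{n_k}\}}^{\hat{E}}$ is compact in $\hat{E}$, and hence $\{x_{n_k}\}$ admits a cluster point $y\in K$. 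This $y$ cannot equal $0$: if it did, a subnet of $\{x_{n_k}\}$ would $\TTT$-converge to $0$ in $\hat{E}$, and, since $E$ inherits its topology from $\hat{E}$, would eventually sit in $U$, contradicting the choice of $\{x_{n_k}\}$.

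Next I would transfer the $\tau$-nullity to $y$ using $\tau\subseteq\TTT$. Each $\phi\in(E,\tau)'$ is automatically $\TTT$-continuous and therefore extends uniquely to a continuous linear functional $\hat\phi\in\hat{E}'$. From $x_n\to 0$ in $\tau$ we have $\phi(x_n)\to 0$, and evaluating $\hat\phi$ along the subnet converging to $y$ gives $\hat\phi(y)=0$ for every $\phi\in(E,\tau)'$. To close the argument it then suffices to deduce $y=0$, contradicting $y\neq 0$.

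For the consequent, one applies the first part with $\tau:=\sigma(E,(E,\TTT)')$. Then $(E,\tau)'=(E,\TTT)'=\hat{E}'$, and the Hahn--Banach theorem ensures that $\hat{E}'$ separates points of $\hat{E}$; therefore the condition $\hat\phi(y)=0$ for every $\hat\phi\in\hat{E}'$ forces $y=0$, yielding the desired contradiction. The step where I expect the real work to sit is precisely this final separation of $y$ from $0$ by extensions of $(E,\tau)'$ to $\hat{E}$: once that is in hand, the rest of the proof is a routine blend of compactness and Hausdorff-separation. The weak-topology specialization supplies this separation automatically through Hahn--Banach, which is exactly why the consequent falls out as a direct corollary of the first assertion.
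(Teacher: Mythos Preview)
Your instinct about the ``final separation'' step is exactly right, and in fact the gap cannot be closed: the first assertion of the lemma is false in general. Take $E=c_{00}$ with $\TTT$ the $\ell_1$-norm topology (so $\hat E=\ell_1$), choose $y_0=(2^{-k})_{k\ge1}\in\ell_1\setminus c_{00}$, put $F=\{\phi\in\ell_\infty:\phi(y_0)=0\}$, and let $\tau=\sigma(E,F)$. Since no nonzero element of $c_{00}$ is a scalar multiple of $y_0$, Hahn--Banach shows that $F$ separates points of $E$, so $\tau$ is a Hausdorff locally convex topology with $\tau\subseteq\TTT$. The sequence $x_n=(2^{-1},\dots,2^{-n},0,0,\dots)$ converges to $y_0$ in $\ell_1$, hence $\{x_n\}$ is $\TTT$-precompact; and for each $\phi\in F$ one has $\phi(x_n)\to\phi(y_0)=0$, so $\{x_n\}$ is $\tau$-null. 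Yet $\|x_n\|_1\to1$, so $\{x_n\}$ is not $\TTT$-null. The paper's own proof conceals exactly the same issue in the unjustified line ``Since $\tau\subseteq\TTT$ we obtain $\overline S=S\cup\{0\}$'': in the example just given, the closure of $S$ in $\hat E$ is $\{x_n:n\in\w\}\cup\{y_0\}$, not $S\cup\{0\}$.

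Your argument for the consequent, on the other hand, is correct and complete. When $\tau=\sigma\big(E,(E,\TTT)'\big)$ the functionals in $(E,\tau)'$ extend precisely to $\hat E'$, Hahn--Banach guarantees that $\hat E'$ separates points of $\hat E$, and the contradiction $y=0\neq y$ goes through. This weak-topology version is in fact the only form of the lemma that the paper actually invokes in its subsequent applications.
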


\begin{proof}
Let $\overline{E}$ be the completion of $(E,\TTT)$. Since $S$ is $\TTT$-precompact, its completion $\overline{S}$ in $\overline{E}$ is compact. By assumption $S$ is $\tau$-null, so $S\cup \{0\}$ is $\tau$-compact. Since $\tau\subseteq \TTT$ we obtain $\overline{S}=S\cup \{0\}$. As the identity map $I:(E,\TTT)\to (E,\tau)$ is continuous, $I{\restriction}_{\overline{S}}$ is a homeomorphism. Since $S$ is $\tau$-null, it follows that  $x_n\to 0$ in $(E,\TTT)$.\qed
\end{proof}

If $p=\infty$, the equivalence of the conditions (i), (i') and (v) in the next theorem motivate Grothendieck \cite{Grothen} to introduce the strict $DP$ property for locally convex spaces (they are exactly the conditions (SDP$_1$)-(SDP$_3$) from \cite[p.633]{Edwards}). 

\begin{theorem} \label{t:strict-DPp}
Let $p\in[1,\infty]$, and let $E$ be a locally convex space. Then the following assertions are equivalent:
\begin{enumerate}
\item[{\rm(i)}] for every Banach space $L$, each operator $T\in\LL(E,L)$, which transforms bounded sets into relatively weakly compact sets,  is $p$-convergent; that is, $E$ has the strict $DP_p$ property;
\item[{\rm(i')}]  for every locally convex space $L$, each operator $T\in\LL(E,L)$, which transforms bounded sets into relatively weakly compact sets,  is $p$-convergent;
\item[{\rm(ii)}] for every locally convex  space $L$, each operator $T\in\LL(E,L)$, which transforms bounded sets into relatively weakly compact sets,  transforms each weakly sequentially $p$-precompact subset of $E$ into a precompact subset of $L$;
\item[{\rm(ii')}] as in {\rm(ii)}, but $L$ is assumed to be a Banach space;
\item[{\rm(iii)}] for  every  locally convex  space $L$, each operator $T\in\LL(E,L)$, which transforms bounded sets into relatively weakly compact sets,  transforms each weakly sequentially $p$-compact subset of $E$ into a precompact subset of $L$;
\item[{\rm(iii')}] as in {\rm(iii)}, but $L$ is assumed to be a Banach space;
\item[{\rm(iv)}] each weakly $p$-summable sequence in $E$ is precompact for the Grothendieck topology $\tau_{\Sigma'}$;
\item[{\rm(v)}] each $B\in \Sigma'(E')$ is precompact for $\TTT_\BB$, where $\BB$ is the family of all weakly $p$-summable sequences in $E$;
\item[{\rm(vi)}] the space $\big(E,\tau_{\Sigma'}\big)$ has the $p$-Schur property;
\item[{\rm(vii)}] for every Banach space $L$ and for each operator $T\in\LL(E,L)$, which transforms bounded sets into relatively weakly compact sets, $T^\ast(B)$ is a $p$-$(V)$ set for any bounded subset $B$ of $L'_\beta$.
\end{enumerate}
\end{theorem}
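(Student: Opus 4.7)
The plan is to organize the equivalences into three layers: the passage from Banach to arbitrary lcs codomains, the Banach-codomain cycle, and the characterization via the Grothendieck topology $\tau_{\Sigma'}$. The main work will be the equivalence (i) $\Leftrightarrow$ (vi).

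For the passage (i) $\Leftrightarrow$ (i'), (ii) $\Leftrightarrow$ (ii'), (iii) $\Leftrightarrow$ (iii'), the direction from the lcs version to the Banach version is trivial. For the converse, I will factor any operator $T: E \to L$ through the Banach completion $\widehat{L_V}$ for each closed absolutely convex $V \in \Nn_0(L)$; since the quotient $q_V : L \to \widehat{L_V}$ is weak-weak continuous, the composition $q_V \circ T$ still sends bounded sets to relatively weakly compact sets. Running the Banach version of each condition on these compositions and letting $V$ range over a $0$-base of $L$ will recover $p$-convergence of $T$ (for (i')), while for (ii), (iii) Proposition \ref{p:p-convergent-s} combined with Lemma \ref{l:p-conver-p-Cauchy} will deliver the image-precompactness. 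For the Banach-codomain cycle (iii') $\Rightarrow$ (i) $\Rightarrow$ (ii') $\Rightarrow$ (iii'), note that (ii') $\Rightarrow$ (iii') is immediate, since weakly sequentially $p$-compact sets are weakly sequentially $p$-precompact; for (iii') $\Rightarrow$ (i), I will observe that the set $\{x_n\} \cup \{0\}$ attached to any weakly $p$-summable sequence is weakly sequentially $p$-compact, so its image is precompact, and Lemma \ref{l:null-seq} combined with the weak nullity of $T(x_n)$ will upgrade this to norm convergence; (i) $\Rightarrow$ (ii') follows from Proposition \ref{p:p-convergent-s} since sequential precompactness in a Banach space coincides with precompactness.

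The heart of the theorem is the equivalence (i) $\Leftrightarrow$ (vi), which I will establish via Grothendieck-style $C(K)$ embeddings. For (i) $\Rightarrow$ (vi), fix $K \in \Sigma'(E')$; viewed in the weak$^\ast$ topology $K$ is compact Hausdorff, and the evaluation $J: E \to C(K)$ defined by $J(x)(\chi) := \langle\chi, x\rangle$ is continuous by equicontinuity of $K$. Grothendieck's criterion for weak compactness in $C(K)$ (norm-bounded plus pointwise relatively compact) will show that $J$ sends bounded sets to weakly relatively compact subsets of $C(K)$, so the strict $DP_p$ hypothesis will make $J$ $p$-convergent, and hence $\sup_{\chi \in K}|\langle\chi, x_n\rangle| = \|J(x_n)\|_{C(K)} \to 0$ for every weakly $p$-summable $\{x_n\}$; since $K$ is arbitrary in $\Sigma'(E')$, this says $x_n \to 0$ in $\tau_{\Sigma'}$. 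Conversely, for (vi) $\Rightarrow$ (i), given $T: E \to L$ Banach sending bounded to relatively weakly compact, the set $K := \overline{T^\ast(B_{L'})}^{\,w}$ will be absolutely convex, equicontinuous, and weakly compact in $E'_\beta$, so $K \in \Sigma'(E')$; the polar inclusion $T(K^\circ) \subseteq B_L$ will make $T: (E, \tau_{\Sigma'}) \to L$ continuous, and the $p$-Schur property of $(E, \tau_{\Sigma'})$ will force $T(x_n) \to 0$ in $L$ for every weakly $p$-summable $\{x_n\}$.

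The remaining equivalences are formal. For (iv) $\Leftrightarrow$ (vi), Lemma \ref{l:null-seq} applied to $\sigma(E, E') \subseteq \tau_{\Sigma'}$ will show that a weakly $p$-summable (hence weakly null) sequence is $\tau_{\Sigma'}$-precompact iff it is $\tau_{\Sigma'}$-null. The equivalence (iv) $\Leftrightarrow$ (v) is a polar-duality calculation: the $\tau_{\Sigma'}$-precompactness of a weakly $p$-summable $\{x_n\}$ is precisely $\sup_{\chi \in B}|\langle\chi, x_n - x_m\rangle| \to 0$ for every $B \in \Sigma'(E')$, which is the $\TTT_\BB$-precompactness of $B$. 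Finally, (vii) $\Leftrightarrow$ (i) follows by unpacking the definition of $p$-$(V)$ set applied to $T^\ast(B)$ as $B$ ranges over bounded subsets of $L'_\beta$. The principal obstacle will be the (vi) $\Rightarrow$ (i) direction: verifying that $K = \overline{T^\ast(B_{L'})}^{\,w}$ is genuinely weakly compact in $E'_\beta$ (not merely weak$^\ast$-compact) is a Gantmacher-type statement that is the essential technical content of this implication, and it must be extracted carefully from the hypothesis that $T$ maps bounded sets to weakly relatively compact sets.
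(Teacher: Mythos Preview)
Your approach is correct and genuinely different from the paper's. The paper establishes all of (i)$\Leftrightarrow$(i')$\Leftrightarrow$(iv)$\Leftrightarrow$(v) (and the pairs (ii)$\Leftrightarrow$(ii'), (iii)$\Leftrightarrow$(iii')) by a single appeal to Edwards' general precompactness interchange theorem (Theorem~9.3.4 in \cite{Edwards}, restated in the paper as Theorem~\ref{t:G-dual-top-3}), applied to the family $\mathfrak{S}$ of weakly $p$-summable sequences; then (iv)$\Leftrightarrow$(vi) is a short step via Lemma~\ref{l:null-seq}, and (i)$\Leftrightarrow$(vii) is quoted from \cite{Gab-Pel}. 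You instead make (i)$\Leftrightarrow$(vi) the center of the argument and prove it directly via the evaluation map $J:E\to C(K)$ for $K\in\Sigma'(E')$ in one direction and the Gantmacher-type compactness of $\overline{T^\ast(B_{L'})}^{\,w}$ in the other. This is more self-contained and makes the geometric content visible, but what you are really doing is reproving the special case of Edwards' theorem that the paper simply cites; in particular the Gantmacher step you flag as the ``principal obstacle'' is exactly the content of Theorem~\ref{t:G-dual-top-2} (equivalence (i)$\Leftrightarrow$(vii) there, valid since $L$ is Banach).

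Two places in your sketch hide nontrivial work. First, ``Grothendieck's criterion (norm-bounded plus pointwise relatively compact)'' must mean relatively compact in $C_p(K)$, not merely pointwise bounded; verifying this requires that every pointwise cluster point of $J(B)$ on $K$ is continuous, which uses that $K$ is $\sigma(E',E'')$-compact (so cluster points come from $E''$ and are $\sigma(E',E'')$-continuous on $K$). Second, your claim that (iv)$\Leftrightarrow$(v) is ``a polar-duality calculation'' is not literally true: the equivalence between $\tau_{\Sigma'}$-precompactness of each $A\in\BB$ and $\TTT_\BB$-precompactness of each $B\in\Sigma'(E')$ is precisely the Grothendieck interchange theorem (Theorem~\ref{t:G-dual-top-1}, (i)$\Leftrightarrow$(ii)); it is elementary but not a one-line identity, and the condition you wrote down is $\tau_{\Sigma'}$-Cauchyness of $\{x_n\}$, which is a statement about the sequence, not about $B$.
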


\begin{proof}
The equivalences (i)$\Leftrightarrow$(i')$\Leftrightarrow$(v)  (resp., (ii)$\Leftrightarrow$(ii') or (iii)$\Leftrightarrow$(iii')) immediately follow from Lemma \ref{l:null-seq} and
Theorem 9.3.4 of \cite{Edwards}
applied to the family $\mathfrak{S}$ of all weakly $p$-summable sequences (resp., to the family $\mathfrak{S}$ of all weakly sequentially $p$-precompact sets or to the family $\mathfrak{S}$ of all weakly sequentially $p$-compact sets) in $E$.

(i)$\Ra$(iv) Let $S=\{x_n\}_{n\in\w}$ be a weakly $p$-summable sequence in $E$. Then 
 Theorem 9.3.4 of \cite{Edwards}, applied to the family  of all weakly $p$-summable sequences, implies that
$S$ is precompact for $\tau_{\Sigma'}$.

(iv)$\Ra$(i) Let $L$ be a Banach space, and let $T\in\LL(E,L)$ transform bounded sets into relatively weakly compact sets. If  $S=\{x_n\}_{n\in\w}$ is a weakly $p$-summable sequence in $E$,
Theorem 9.3.4 of \cite{Edwards}, applied to the family  of all weakly $p$-summable sequences,
implies that $T(S)$ is precompact in  $L$. Since $T(S)$ is also weakly $p$-summable, Lemma \ref{l:null-seq} implies $T(x_n)\to 0$ in $L$. Thus $T$ is $p$-convergent.

\smallskip

(i)$\Ra$(ii') Let $T:E\to L$  be an operator from $E$ into a Banach space $L$ which  transforms bounded sets into relatively weakly compact sets. Then, by (i), $T$ is $p$-convergent. Let $A$ be a weakly sequentially $p$-precompact subset of $E$. Then, by Proposition \ref{p:p-convergent-s}, $T(A)$ is  relatively sequentially compact in $L$. As $L$ is a Banach space, it follows that $T(A)$ is relatively compact hence precompact in $L$, as desired.
\smallskip

(ii')$\Ra$(iii') is trivial.
\smallskip

(iii')$\Ra$(i) Let $T:E\to L$  be an operator from $E$ into a Banach space $L$ which  transforms bounded sets into relatively weakly compact sets. Fix a weakly $p$-summable sequence $S=\{x_n\}_{n\in\w}$ in $E$. Since $S$ is weakly sequentially $p$-compact, by (iii'), the set $T(S)=\{T(x_n)\}_{n\in\w}$ is precompact in $L$.
Since $T(S)$ is also weakly $p$-summable, Lemma \ref{l:null-seq} implies $T(x_n)\to 0$ in $L$.
Thus $T$ is $p$-convergent.
\smallskip

(iv)$\Ra$(vi) Let $S=\{x_n\}_{n\in\w}$ be a weakly $p$-summable sequence in $\big(E,\tau_{\Sigma'}\big)$. Since, by (i) of Lemma \ref{l:Sigma'}, $\tau_{\Sigma'}$ is compatible with $\tau$, it follows that $S$ is weakly $p$-summable in $E$. Therefore, by (iv), the sequence $S$ is precompact in $\big(E,\tau_{\Sigma'}\big)$. Now Lemma \ref{l:null-seq} implies that $x_n\to 0$ in $\big(E,\tau_{\Sigma'}\big)$. Thus $\big(E,\tau_{\Sigma'}\big)$ has the $p$-Schur property.

(vi)$\Ra$(iv) Let $S=\{x_n\}_{n\in\w}$ be a weakly $p$-summable sequence in $E$. Since, by (i) of Lemma \ref{l:Sigma'}, $\tau_{\Sigma'}$ is compatible with $\tau$, it follows that $S$ is weakly $p$-summable in $\big(E,\tau_{\Sigma'}\big)$. As $\big(E,\tau_{\Sigma'}\big)$ has the $p$-Schur property, we obtain that $x_n \to 0$ in $\tau_{\Sigma'}$. In particular, $S$ is  precompact for $\tau_{\Sigma'}$.
\smallskip

The equivalence (i)$\Leftrightarrow$(vii) follows from Theorem 14.6 of \cite{Gab-Pel}  
which states that an operator $T:E\to L$ is $p$-convergent if and only if $T^\ast(B)$ is a $p$-$(V)$ set in $E'$ for every bounded subset $B$ of the Banach space $L'_\beta$.\qed
\end{proof}

\begin{corollary} \label{c:weak-Glick-DP}
Let $p\in[1,\infty]$, and let $(E,\tau)$ be a locally convex space. Then:
\begin{enumerate}
\item[{\rm(i)}] if $E$ has the weak Glicksberg property, then it has the $DP$ property;
\item[{\rm(ii)}] if $E$ has the $p$-Schur property, then it has the strict $DP_p$ property;
\item[{\rm(iii)}] if $E$ is semi-Montel, then it has the $DP$-property and the strict $DP_p$-property.
\end{enumerate}
\end{corollary}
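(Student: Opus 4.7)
The plan is to reduce each of the three parts to the characterizations established in Theorem~\ref{t:def-DP} and Theorem~\ref{t:strict-DPp}, exploiting the simple but central fact from Lemma~\ref{l:Sigma'}(i) that
\[
\sigma(E,E') \subseteq \tau_{\Sigma'} \subseteq \tau
\]
and that $\tau_{\Sigma'}$ is compatible with $\tau$, so the weak topology of $(E,\tau_{\Sigma'})$ coincides with $\sigma(E,E')$.

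For (i) I would argue as follows. Assume $(E,\tau)$ has the weak Glicksberg property, so every absolutely convex $\sigma(E,E')$-compact subset $A$ of $E$ is $\tau$-compact. Since $\tau_{\Sigma'} \subseteq \tau$, such an $A$ is then $\tau_{\Sigma'}$-compact; as the weak topology of $(E,\tau_{\Sigma'})$ is precisely $\sigma(E,E')$, this says exactly that $(E,\tau_{\Sigma'})$ has the weak Glicksberg property. Condition (viii) of Theorem~\ref{t:def-DP} then delivers the $DP$ property.

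Part (ii) proceeds in parallel. By compatibility a sequence $\{x_n\}\subseteq E$ is weakly $p$-summable in $(E,\tau_{\Sigma'})$ if and only if it is weakly $p$-summable in $(E,\tau)$. If $E$ has the $p$-Schur property then such sequences are $\tau$-null, hence $\tau_{\Sigma'}$-null because $\tau_{\Sigma'}\subseteq \tau$. Consequently $(E,\tau_{\Sigma'})$ has the $p$-Schur property, and condition (vi) of Theorem~\ref{t:strict-DPp} yields the strict $DP_p$ property.

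For (iii) the plan is to show that a semi-Montel space fulfills the hypotheses of both (i) and (ii). If $A$ is absolutely convex and $\sigma(E,E')$-compact, then $A$ is weakly bounded (hence bounded) and weakly closed (hence $\tau$-closed); semi-Montelness gives that the $\tau$-closure of $A$ is $\tau$-compact, so $A$ itself is $\tau$-compact. Thus $E$ has the (full) Glicksberg property, so (i) applies. For the $p$-Schur property, any weakly $p$-summable sequence $\{x_n\}$ is bounded, hence $\tau$-precompact by semi-Montelness, and also $\sigma(E,E')$-null; Lemma~\ref{l:null-seq} applied with the coarse topology $\sigma(E,E')$ and the fine topology $\tau$ then forces $x_n\to 0$ in $\tau$, so $E$ has the $p$-Schur property and (ii) supplies the strict $DP_p$ property. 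I do not expect a real obstacle here: each item is a short compatibility argument given Theorems~\ref{t:def-DP} and~\ref{t:strict-DPp}, and the only point that demands care is ensuring that the notions ``weakly compact'' and ``weakly $p$-summable'' are genuinely preserved when one replaces $\tau$ by $\tau_{\Sigma'}$, which is precisely the content of compatibility.
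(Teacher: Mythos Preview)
Your argument is correct and follows essentially the same route as the paper: both reduce (i) and (ii) to the characterizations in Theorems~\ref{t:def-DP}(viii) and~\ref{t:strict-DPp}(vi) via the inclusion $\tau_{\Sigma'}\subseteq\tau$ from Lemma~\ref{l:Sigma'}(i), and both handle (iii) by observing that a semi-Montel space has the Glicksberg property and then feeding back into (i) and (ii). The only difference is cosmetic: the paper invokes Lemma~\ref{l:weak-Glick} to pass from Glicksberg to weak Glicksberg and leaves the $p$-Schur verification implicit, while you spell out the $p$-Schur step explicitly via Lemma~\ref{l:null-seq}; also, your sentence ``Thus $E$ has the (full) Glicksberg property'' after checking only absolutely convex $A$ is a harmless overstatement, since the same argument works for arbitrary weakly compact $A$ and in any case only the weak Glicksberg property is needed for~(i).
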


\begin{proof}
(i), (ii): By (i) of Lemma \ref{l:Sigma'}, we have  $\sigma(E,E')\subseteq \tau_{\Sigma'}\subseteq \tau$. It follows that also $(E,\tau_{\Sigma'})$ has the weak Glicksberg property or  the $p$-Schur property, respectively. Thus $E$ has the $DP$ property or the strict $DP_p$ property by Theorem \ref{t:def-DP} or Theorem \ref{t:strict-DPp}, respectively.

(iii) Assume that $E$ is semi-Montel. Then, by definition, $E$ has the Glicksberg property. Therefore, by (i) and (ii) and Lemma \ref{l:weak-Glick}, $E$  has the $DP$-property and the strict $DP_p$-property.\qed
%
\end{proof}

In the case when $\tau\subseteq \tau_{\Sigma'}$ we can reverse (i) and (ii) in Corollary \ref{c:weak-Glick-DP}.
\begin{corollary} \label{c:semi-reflexive-strict-DP}
Let $p\in[1,\infty]$, and let $(E,\tau)$ be a locally convex spaces such that $\tau=\tau_{\Sigma'}$. Then:
\begin{enumerate}
\item[{\rm(i)}] $E$ has the $DP$-property if and only if it has the weak Glicksberg property;
\item[{\rm(ii)}]  $E$ has the strict $DP_p$-property if and only if it has the $p$-Schur property.
\end{enumerate}
\end{corollary}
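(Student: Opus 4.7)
The plan is straightforward: both assertions are equivalences, and in each case one direction is already recorded in Corollary \ref{c:weak-Glick-DP} without any extra hypothesis, while the converse direction becomes immediate once the hypothesis $\tau=\tau_{\Sigma'}$ is combined with the characterizations established in Theorem \ref{t:def-DP} and Theorem \ref{t:strict-DPp}.

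For part (i), the implication ``weak Glicksberg $\Rightarrow$ $DP$'' is precisely (i) of Corollary \ref{c:weak-Glick-DP}, so nothing needs to be said. For the converse I would argue as follows: assume $E$ has the $DP$ property. Then, by the equivalence (ii)$\Leftrightarrow$(viii) in Theorem \ref{t:def-DP}, the space $(E,\tau_{\Sigma'})$ has the weak Glicksberg property. Since by hypothesis $\tau=\tau_{\Sigma'}$, this is the same as saying that $(E,\tau)$ has the weak Glicksberg property, which is the conclusion.

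For part (ii), the direction ``$p$-Schur $\Rightarrow$ strict $DP_p$'' is (ii) of Corollary \ref{c:weak-Glick-DP}. Conversely, suppose $E$ has the strict $DP_p$ property. By the equivalence (i)$\Leftrightarrow$(vi) in Theorem \ref{t:strict-DPp}, the space $(E,\tau_{\Sigma'})$ has the $p$-Schur property, and again the hypothesis $\tau=\tau_{\Sigma'}$ allows us to transfer this to $(E,\tau)$ itself.

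There is no real obstacle to overcome here; the corollary is essentially a bookkeeping statement that specializes Theorems \ref{t:def-DP} and \ref{t:strict-DPp} to the class of spaces where the Grothendieck topology coincides with the original one. It is worth noting in the text that this hypothesis is satisfied in several natural situations identified in Lemma \ref{l:Sigma'}, for instance whenever $E$ is semi-reflexive (by part (iv)) or whenever $E$ is a normed space whose completion is a reflexive Banach space (by part (vi)), so the corollary has genuine content despite the brevity of its proof.
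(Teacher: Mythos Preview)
Your argument is correct and is essentially the same as the paper's: both parts are immediate from the equivalences (ii)$\Leftrightarrow$(viii) of Theorem \ref{t:def-DP} and (i)$\Leftrightarrow$(vi) of Theorem \ref{t:strict-DPp}, specialized via the hypothesis $\tau=\tau_{\Sigma'}$. Your separate invocation of Corollary \ref{c:weak-Glick-DP} for one direction is harmless but unnecessary, since those theorems already give both implications.
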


\begin{proof}
The clause (i) follows from the equivalence (ii)$\Leftrightarrow$(viii) of Theorem \ref{t:def-DP}, and (ii) follows from the equivalence (i)$\Leftrightarrow$(vi) of Theorem \ref{t:strict-DPp}.\qed
\end{proof}

Let $(E,\tau)$ be a locally convex space, and let $q\in[1,\infty]$. We denote by $\tau_{S_q}$ the polar topology on $E$ of uniform convergence on weakly $q$-summable sequences in $E'_\beta$. Below we list some basic properties of the ``sequentially-open'' topology $\tau_{S_q}$ which will be used repeatedly in what follows.

\begin{lemma} \label{l:Sigma-seq}
Let $q\in[1,\infty]$, and let $(E,\tau)$ be a locally convex space. Then:
\begin{enumerate}
\item[{\rm(i)}]  $\sigma(E,E')\subseteq \tau_{S_q}$.
\item[{\rm(ii)}] $E$ is a $q$-quasibarrelled space if and only if $\tau_{S_q}\subseteq \tau$.
\item[{\rm(iii)}] $\tau_{S_q}\subseteq \mu(E,E')$ if and only if for every weakly $q$-summable sequence $S=(\chi_n)$ in $E'_\beta$, the absolutely convex hull of $S$ is relatively weak$^\ast$ compact.
\item[{\rm(iv)}] If $E'_\beta$ is locally complete (for example, $E$ is quasibarrelled), then $\tau_{S_q}$ is compatible with $\tau$.
\item[{\rm(v)}] $\tau_{S_q}\subseteq \tau_{\Sigma'}$ if and only if $E$ is a $q$-quasibarrelled space whose strong dual $E'_\beta$ is  locally complete {\rm(}for example, $E$ is quasibarrelled{\rm)}.
\end{enumerate}
\end{lemma}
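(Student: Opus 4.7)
The plan is to prove each of (i)--(v) by translating between polar neighborhoods in $E$ and closures of convex hulls in $E'$ via the bipolar theorem, with the Mackey--Arens theorem and Theorem \ref{t:weakly-p-lc} as the main tools. Clause (i) is immediate: for any finite set $\{\chi_1,\dots,\chi_n\}\subseteq E'$ the finitely supported sequence $(\chi_1,\dots,\chi_n,0,0,\dots)$ is weakly $q$-summable, and its polar is a basic $\sigma(E,E')$-neighborhood, so $\sigma(E,E')\subseteq \tau_{S_q}$. Clause (ii) is almost definitional: $E$ is $q$-quasibarrelled iff every weakly $q$-summable $S\subseteq E'_\beta$ lies in some $U^\circ$ with $U\in\Nn_0(E)$, iff $U\subseteq S^\circ$, iff $S^\circ\in\tau$ for each such $S$, iff $\tau_{S_q}\subseteq \tau$.

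For (iii), I would invoke the Mackey--Arens description of $\mu(E,E')$ as uniform convergence on absolutely convex $\sigma(E',E)$-compact sets. Writing $S^\circ=(\acx(S))^\circ$ and applying the bipolar theorem, $S^\circ$ is a $\mu(E,E')$-neighborhood iff $\acx(S)$ is contained in some absolutely convex $\sigma(E',E)$-compact set, i.e.\ iff $\acx(S)$ is relatively weak$^{\ast}$-compact. For (iv), Theorem \ref{t:weakly-p-lc} applied to the locally complete space $E'_\beta$ shows that the closed absolutely convex hull of any weakly $q$-summable sequence in $E'_\beta$ is weakly compact, hence $\sigma(E',E'')$-compact, hence (since $\sigma(E',E)\subseteq \sigma(E',E'')$) relatively weak$^{\ast}$-compact. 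Clause (iii) then yields $\tau_{S_q}\subseteq\mu(E,E')$, and combined with (i) and Mackey--Arens this gives $(E,\tau_{S_q})'=E'$, i.e.\ compatibility. The parenthetical implication "quasibarrelled $\Rightarrow E'_\beta$ locally complete" is standard and I would simply cite it from \cite{Jar}.

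For (v), the forward direction is: if $\tau_{S_q}\subseteq \tau_{\Sigma'}$, then Lemma \ref{l:Sigma'}(i) gives $\tau_{S_q}\subseteq\tau$, so (ii) supplies $q$-quasibarrelledness; and for any weakly $q$-summable $S\subseteq E'_\beta$ the polar $S^\circ$ contains $K^\circ$ for some $K\in\Sigma'(E')$, so the bipolar theorem yields $\cacx(S)\subseteq K^{\circ\circ}=K$, which is weakly compact in $E'_\beta$, and Theorem \ref{t:weakly-p-lc} then produces local completeness of $E'_\beta$. Conversely, if $E$ is $q$-quasibarrelled, any weakly $q$-summable $S\subseteq E'_\beta$ sits inside some $U^\circ$ with $U\in\Nn_0^c(E)$; Lemma \ref{l:polar-open} guarantees $U^\circ$ is strongly closed, so $\cacx(S)\subseteq U^\circ$ is equicontinuous, and local completeness of $E'_\beta$ makes $\cacx(S)$ weakly compact. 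Thus $\cacx(S)\in\Sigma'(E')$ and $S^\circ=(\cacx(S))^\circ$ is a $\tau_{\Sigma'}$-neighborhood, giving $\tau_{S_q}\subseteq\tau_{\Sigma'}$. The main obstacle throughout is bookkeeping: one must carefully distinguish the $\beta(E',E)$-closure from the $\sigma(E',E)$- and $\sigma(E',E'')$-closures when taking bipolars, since it is precisely the interplay between these closures that drives the equivalences.
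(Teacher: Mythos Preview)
Your proof is correct and follows essentially the same route as the paper's: both arguments reduce each clause to bipolar/Mackey--Arens bookkeeping together with Theorem~\ref{t:weakly-p-lc}. The only cosmetic differences are that in (v) the paper obtains equicontinuity of $S$ directly from the inclusion $S\subseteq K$ (with $K$ equicontinuous) rather than passing through clause~(ii), and it does not explicitly invoke Lemma~\ref{l:polar-open} for the closedness of $U^\circ$, but these are the same argument in different words.
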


\begin{proof}
(i) Let $\{\chi_0,\dots,\chi_k\}$ be a finite subset of $E'$. For every $n>k$, set $\chi_n=0$. It is clear that $\{\chi_n\}_{n\in\w}$ is a weakly $q$-summable sequence in $E'_\beta$. Now it is evident that $\sigma(E,E')\subseteq \tau_{S_q}$.

(ii) Assume that $E$ is a $q$-quasibarrelled space. Let $S=\{\chi_n\}_{n\in\w}$ be a weakly $q$-summable sequence in $E'_\beta$. Since $E$ is $q$-quasibarrelled, $S$ is equicontinuous. Take a closed absolutely convex neighborhood $U$ of zero in $E$ such that $S\subseteq U^\circ$. Then $U^{\circ\circ}=U\subseteq S^\circ$. Thus $\tau_{S_q}\subseteq \tau$.

Conversely, assume that $\tau_{S_q}\subseteq \tau$. To show that $E$ is $q$-quasibarrelled, let $S=\{\chi_n\}_{n\in\w}$ be a weakly $q$-summable sequence in $E'_\beta$. Then the inclusion $\tau_{S_q}\subseteq \tau$ implies that there is $U\in \Nn_0(E)$ such that $U\subseteq S^\circ$. Then $S\subseteq U^\circ$, and hence the sequence $S$ is equicontinuous. Thus  $E$ is $q$-quasibarrelled.

(iii) By the Mackey--Arens theorem, the inclusion $\tau_{S_q}\subseteq \mu(E,E')$ holds if and only if for every weakly $q$-summable sequence $S=(\chi_n)$ in $E'_\beta$, there is a weak$^\ast$ compact, absolutely convex subset $K$ of $E'$ such that $K^\circ \subseteq S^\circ$ if and only if $S^{\circ\circ}\subseteq K^{\circ\circ}=K$, as desired.

\smallskip

(iv) Assume that $E$ is such that $E'_\beta$ is locally complete (if $E$ is quasibarrelled, then by Proposition 11.2.3 of \cite{Jar}, $E'_\beta$ is quasi-complete and hence locally complete).
Then the weak closure of the absolutely convex hull $K$ of any weakly $q$-summable sequence $(\chi_n)$ in $E'_\beta$ is  weakly compact, and hence $K$ is also a weak$^\ast$ compact, absolutely convex subset of $E'$. Now the Mackey--Arens theorem implies that $\tau_{S_q}$ is weaker than the Mackey topology $\mu(E,E')$. 
This fact and (i) imply that $\tau_{S_q}$ is compatible with $\tau$.
\smallskip

(v) Assume that $\tau_{S_q}\subseteq \tau_{\Sigma'}$. Let  $S=(\chi_n)$ be a weakly $q$-summable sequence in $E'_\beta$. Then there is $K\in \Sigma'(E')$ such that $K^\circ\subseteq S^\circ$ and hence $S\subseteq S^{\circ\circ}\subseteq K$. Since $K$ is equicontinuous, it follows that also $S$ is equicontinuous and hence $E$ is $q$-quasibarrelled. On the other hand, since $K$ is weakly compact and absolutely convex in $E'_\beta$, we obtain that the closed, absolutely convex hull of $S$ is a weakly compact subset of $E'_\beta$. Therefore, by Theorem \ref{t:weakly-p-lc}, $E'_\beta$ is locally complete.

Conversely, assume that $E$ is a $q$-quasibarrelled space whose strong dual $E'_\beta$ is locally complete. If $S=(\chi_n)$ is a weakly $q$-summable sequence in $E'_\beta$, then the local completeness of $E'_\beta$ implies that the closed absolutely convex hull $K:=\cacx(S)$ is a weakly compact subset of $E'_\beta$. Since $E$ is $q$-quasibarrelled, $S$ and hence also $K$ are equicontinuous. Therefore $K\in\Sigma'(E')$. It follows that $\tau_{S_q}\subseteq \tau_{\Sigma'}$, as desired.


If $E$ is quasibarrelled, then it is trivially $q$-quasibarrelled. By Proposition 11.2.3 of \cite{Jar}, the space $E'_\beta$ is quasi-complete and hence locally complete.\qed
\end{proof}

Below we characterize locally convex spaces with the sequential $DP_{(p,q)}$ property.

\begin{theorem} \label{t:seq-DPpq}
Let $p,q\in[1,\infty]$, $(E,\tau)$ be a locally convex space, $\BB_E=\ell_p^w(E)$ $($or $c_0^w(E)$ if $p=\infty$$)$, $\BB_{E'}=\ell_q^w(E'_\beta)$ $($or $c_0^w(E'_\beta)$ if $q=\infty$$)$, and let $\Id:E\to E$ be the identity map. Then the following assertions are equivalent:
\begin{enumerate}
\item[{\rm(i)}] for each $A=(x_n)\in \BB_E$, the set $A$ is precompact {\rm(}resp., compact{\rm)} for $\TTT_{\BB_{E'}}$;
\item[{\rm(ii)}] for each $B=(\chi_n)\in \BB_{E'}$, the set $B$ is precompact  {\rm(}resp., compact{\rm)} for $\TTT_{\BB_{E}}$;
\item[{\rm(iii)}] for each $A\in \BB_E$, $\Id{\restriction}_A: \big(A,\sigma(E,E'){\restriction}_A\big)\to \big(E,\TTT_{\BB_{E'}}\big)$ is uniformly continuous;
\item[{\rm(iv)}] for each $B\in \BB_{E'}$, $\Id^\ast{\restriction}_B: \big(B,\sigma(E',E){\restriction}_B\big)\to \big(E',\TTT_{\BB_{E}}\big)$ is uniformly continuous;
\item[{\rm(v)}] for each $A=(x_n)\in \BB_E$ and every $B=(\chi_n)\in \BB_{E'}$, the restriction to $B\times A$ of $\langle \eta,x\rangle$ is uniformly continuous for the product topology $\beta(E',E){\restriction}_B\times\sigma(E,E'){\restriction}_A$  $($or vice versa for $\sigma(E',E){\restriction}_B\times\beta(E,E'){\restriction}_A$$)$;
\item[{\rm(vi)}] for each $A=(x_n)\in \BB_E$ and every $B=(\chi_n)\in \BB_{E'}$, the restriction to $B\times A$ of $\langle \eta, x\rangle$ is uniformly continuous for the product topology $\sigma(E',E){\restriction}_B\times\sigma(E,E'){\restriction}_A$;
\item[{\rm(vii)}] $E$ has   the sequential $DP_{(p,q)}$ property;
\item[{\rm(viii)}] each relatively weakly sequentially $p$-compact set in $E$ is a $q$-$(V^\ast)$ set.
\end{enumerate}
Moreover, if $\TTT_{\BB_{E'}}$ is compatible with $\tau$ {\rm(}for example, $E$ is such that $E'_\beta$ is locally complete or $E$ is quasibarrelled{\rm)}, then {\rm(i)-(viii)} are equivalent to
\begin{enumerate}
\item[{\rm(ix)}] $\big(E,\TTT_{\BB_{E'}}\big)$ has   the $p$-Schur property.
\end{enumerate}
\end{theorem}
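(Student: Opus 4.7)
The plan is to build a cycle of implications anchored on condition (vii), the raw definition of the sequential $DP_{(p,q)}$ property. A key preliminary observation is that the polar topology $\TTT_{\BB_{E'}}$ on $E$ is exactly $\tau_{S_q}$ of Lemma \ref{l:Sigma-seq}, so $\sigma(E,E')\subseteq \TTT_{\BB_{E'}}$, and symmetrically $\sigma(E',E)\subseteq \TTT_{\BB_{E}}$. This places us in the regime where Lemma \ref{l:null-seq} upgrades weak nullity plus precompactness to nullity in the finer topology.

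The crux of the argument is the equivalence (vii) $\Leftrightarrow$ (i). For (vii) $\Rightarrow$ (i), the goal is to show that any weakly $p$-summable $(x_n)$ is $\TTT_{\BB_{E'}}$-null, i.e. $\sup_m|\langle\chi_m, x_n\rangle|\to 0$ as $n\to\infty$ for every weakly $q$-summable $(\chi_m)$ in $E'_\beta$. If this failed, I would extract $\e>0$ and pairs $(n_k, m_k)$ with $|\langle\chi_{m_k}, x_{n_k}\rangle|>\e$ and split into two cases: when $(m_k)$ has a bounded subsequence, $\chi_{m_k}$ is (after thinning) a fixed functional, contradicting weak nullity of $(x_n)$; when $m_k\to\infty$, thinning to strictly increasing subsequences makes $(x_{n_k})$ weakly $p$-summable and $(\chi_{m_k})$ weakly $q$-summable, directly contradicting (vii). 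Conversely, under (i) a weakly $p$-summable $(x_n)$ is $\TTT_{\BB_{E'}}$-precompact and $\sigma(E,E')$-null, hence $\TTT_{\BB_{E'}}$-null by Lemma \ref{l:null-seq}, giving $|\langle\chi_n, x_n\rangle|\leq \sup_m|\langle\chi_m, x_n\rangle|\to 0$. The implication (vii) $\Leftrightarrow$ (ii) is entirely symmetric, using that weakly $q$-summable sequences in $E'_\beta$ are weak$^\ast$-null.

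For (vii) $\Leftrightarrow$ (viii), I would suppose (vii) and that some relatively weakly sequentially $p$-compact $A$ is not a $q$-$(V^\ast)$ set; choosing a weakly $q$-summable $(\chi_n)$ and $x_n\in A$ witnessing the failure, extracting a weakly $p$-convergent subsequence $x_{n_k}\to x$ yields a weakly $p$-summable $(x_{n_k}-x)$ and weakly $q$-summable $(\chi_{n_k})$, and since $\chi_n$ is weakly null the $\langle\chi_{n_k}, x\rangle$-term vanishes, contradicting (vii). The reverse is immediate by taking $A=\{x_n\}\cup\{0\}$ for any weakly $p$-summable $(x_n)$. The equivalences (i) $\Leftrightarrow$ (iii) and (ii) $\Leftrightarrow$ (iv) follow from Theorem 9.3.4 of \cite{Edwards} (already invoked in the proofs of Theorems \ref{t:def-DP} and \ref{t:strict-DPp}), which identifies $\mathfrak{S}$-precompactness of a weakly bounded set with uniform continuity of the restricted identity from the weak topology to the $\mathfrak{S}$-topology. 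The equivalences with the bilinear statements (v) and (vi) I would handle by noting that the pairing $(\chi,x)\mapsto\langle\chi,x\rangle$ is uniformly continuous on $B\times A$ for the prescribed product topology iff, using boundedness of $A$ and $B$ and the decomposition $\langle\chi',x'\rangle-\langle\chi,x\rangle=\langle\chi'-\chi,x'\rangle+\langle\chi,x'-x\rangle$, the one-variable identities of (iii) and (iv) are both uniformly continuous.

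For the moreover part, compatibility of $\TTT_{\BB_{E'}}$ with $\tau$ (which by Lemma \ref{l:Sigma-seq}(iv) holds whenever $E'_\beta$ is locally complete, in particular when $E$ is quasibarrelled) ensures that $E$ and $(E,\TTT_{\BB_{E'}})$ share their dual, hence their weak topology and their weakly $p$-summable sequences. Consequently, the $p$-Schur property (ix) of $(E,\TTT_{\BB_{E'}})$ is precisely ``every weakly $p$-summable sequence in $E$ is $\TTT_{\BB_{E'}}$-null'', the condition already extracted from (vii). The main obstacle is the sliding-index case split in (vii) $\Rightarrow$ (i): the bounded-versus-escaping dichotomy for $(m_k)$ has to be executed carefully to land on either the weak nullity of $(x_n)$ or on the precise hypothesis of (vii); a secondary technical point is that the asymmetric mixed topology in (v) forces one to use boundedness of $A$ or $B$ to reconcile it with the symmetric weak-weak formulation (vi).
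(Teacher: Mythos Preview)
Your argument is correct and covers all the equivalences, but it is organized differently from the paper and contains one citation slip.

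The paper packages the equivalences (i)--(vi) by a single appeal to Theorem~9.2.1 of \cite{Edwards} (with $\mathfrak{S}=\BB_E$ and $\mathfrak{S}'=\BB_{E'}$, $T=\Id$), and then links this block to (vii) through (vi), using the compactness of $B\times A$ in the product weak topology to upgrade continuity at $(0,0)$ to uniform continuity. You instead make (vii)$\Leftrightarrow$(i) the hub, proving it directly by the bounded/unbounded dichotomy on the index $m_k$, and then fan out to (ii)--(vi). Both routes work; yours is slightly more elementary in that it never invokes the ``continuous on compact implies uniformly continuous'' step, while the paper's route is shorter because Edwards' Theorem~9.2.1 delivers (i)--(vi) simultaneously. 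Your treatment of (viii) and of (ix) under compatibility is essentially the same as the paper's.

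The citation slip: for (i)$\Leftrightarrow$(iii) and (ii)$\Leftrightarrow$(iv) you invoke ``Theorem~9.3.4 of \cite{Edwards}'', but that theorem concerns operators sending bounded sets to relatively weakly compact sets (the setting of Theorems~\ref{t:def-DP} and~\ref{t:strict-DPp}). The statement you actually describe --- equivalence of $\mathfrak{S}$-precompactness with uniform continuity of the restricted identity from the weak topology into the $\mathfrak{S}$-topology --- is Theorem~9.2.1 of \cite{Edwards}, which is precisely what the paper cites here.
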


\begin{proof}
Set $L:=E$ and define $T:E\to L$ by $T:=\Id$.

By (i) of Lemma \ref{l:Sigma-seq}, we have $\sigma(E,E')\subseteq \TTT_{\BB_{E'}}$. Hence, by Lemma \ref{l:null-seq}, if $A\in \BB_E$ is $\TTT_{\BB_{E'}}$-precompact, then $A\cup\{0\}$ is $\TTT_{\BB_{E'}}$-compact. Analogously, it is clear that $\sigma(E',E)\subseteq \TTT_{\BB_{E}}$. Since each $B=(\chi_n)\in \BB_{E'}$ is $\sigma(E',E)$-null, Lemma \ref{l:null-seq} implies that $B$ is $ \TTT_{\BB_{E}}$-precompact if and only if $B\cup\{0\}$ is $ \TTT_{\BB_{E}}$-compact.
Then the  equivalences (i)$\Leftrightarrow$(ii)$\Leftrightarrow$(iii)$\Leftrightarrow$(iv)$\Leftrightarrow$(v)$\Leftrightarrow$(vi) follow from 
Theorem 9.2.1 of \cite{Edwards}  in which $\mathfrak{S}=\BB_E$ and $\mathfrak{S}'=\BB_{E'}$. 
\smallskip

(vi)$\Ra$(vii) immediately follows from the fact that the weak topology for $\beta(E',E)$ on $E'=L'$ is stronger than the weak$^\ast$ topology $\sigma(E',E)$ on $E'$ and the continuity (with respect to $\sigma(E',E){\restriction}_B\times\sigma(E,E'){\restriction}_A$) of $\langle \eta,x\rangle$ at zero $(0,0)$.
\smallskip

(vii)$\Ra$(vi) Assume that $E$ has the sequential $DP_{(p,q)}$ property. We claim that the bilinear map $\langle \eta, x\rangle$ is weakly continuous at $(0,0)$ for  every $(x_n)\in \BB_E$ and each $(\chi_n)\in \BB_{E'}$. Indeed, suppose for a contradiction that there are $(x_n)\in \BB_E$ and  $(\chi_n)\in \BB_{E'}$ such that $\langle \eta, x\rangle$ is weakly discontinuous at $(0,0)$. Then there exists  $\e>0$ such that for every $i\in\w$ there are $n_i>i$ and $m_i>i$ such that $|\langle \eta_{n_i}, x_{m_i}\rangle|\geq \e$. Without loss of generality we can assume that $\{n_i\}_{i\in\w}$ and $\{m_i\}_{i\in\w}$ are strictly increasing. Then $(x_{n_i})\in \BB_E$ and $(\chi_{n_i})\in \BB_{E'}$ but $|\langle \eta_{n_i}, x_{m_i}\rangle|\not\to 0$, which contradicts the sequential $DP_{(p,q)}$ property of $E$.

Since all sequences $A=(x_n)\in \BB_E$ and  $B=(\chi_n)\in \BB_{E'}$ are weakly null, the claim implies that the bilinear map $\langle \eta, x\rangle$ is continuous on the {\em weakly compact} space $B\times A$. Therefore $\langle \eta, x\rangle$ is uniformly continuous on $B\times A$ for the product topology $\sigma(E',E){\restriction}_B\times\sigma(E,E'){\restriction}_A$.
\smallskip

(vii)$\Ra$(viii) Assume that $E$ has   the sequential $DP_{(p,q)}$ property. Suppose for a contradiction that there is a relatively weakly sequentially $p$-compact subset $A$ of $E$ which is not a  $q$-$(V^\ast)$ set. Then there exist a weakly $q$-summable sequence $\{\chi_n\}_{n\in\w}$ in $E'_\beta$ and $\e>0$ such that $\sup_{a\in A} |\langle\chi_n,a\rangle|\geq 2\e$ for every $n\in\w$. For each $n\in\w$, choose $a_n\in A$ such that $|\langle\chi_n,a_n\rangle|\geq \e$. Since $A$ is relatively weakly sequentially $p$-compact, the sequence $\{a_n\}_{n\in\w}$ has a subsequence $\{a_{n_k}\}_{k\in\w}$ which weakly $p$-converges to a point $z\in E$. Then
\[
\e\leq |\langle\chi_{n_k},a_{n_k}\rangle|\leq |\langle\chi_{n_k},a_{n_k}-z\rangle|+|\langle\chi_{n_k},z\rangle|\to 0 \;\; \mbox{ as $k\to\infty$},
\]
a contradiction.
\smallskip

(viii)$\Ra$(vii) Assume that each relatively weakly sequentially $p$-compact set in $E$ is a $q$-$(V^\ast)$ set. Let $\{x_n\}_{n\in\w}$ be a weakly $p$-summable sequence in $E$, and let $\{\chi_n\}_{n\in\w}$ be a weakly $q$-summable sequence in $E'_\beta$. Since the set $S:=\{x_n\}_{n\in\w}$ is weakly sequentially $p$-compact, it is a $q$-$(V^\ast)$ set. Therefore, by the definition of $q$-$(V^\ast)$ sets, we have
\[
0\leq \lim_{n\to\infty} |\langle\chi_n,x_n\rangle|\leq \lim_{n\to\infty} \sup_{i\in\w} |\langle\chi_n,x_i\rangle|=0.
\]
Thus $E$ has  the sequential $DP_{(p,q)}$ property.
\smallskip

Below we assume that $\TTT_{\BB_{E'}}$ is compatible with $\tau$. So $\sigma(E,E')$ is also the weak topology of $\big(E,\TTT_{\BB_{E'}}\big)$.
\smallskip

(iii)$\Ra$(ix) Let $(x_n)\in \ell_p^w\big(E,\TTT_{\BB_{E'}}\big)$ (or $\in c_0^w\big(E,\TTT_{\BB_{E'}}\big)$ if $p=\infty$). Since $\TTT_{\BB_{E'}}$ is compatible with $\tau$, it follows that $(x_n)\in \BB_{E}$. Then (iii) implies $x_n\to 0$ in $\TTT_{\BB_{E'}}$, that is, $\big(E,\TTT_{\BB_{E'}}\big)$ has the $p$-Schur property.

(ix)$\Ra$(iii) Let $A=(x_n)\in \BB_E$. Since $\TTT_{\BB_{E'}}$ and $\tau$ are compatible, it follows that $A\in \ell_p^w\big(E,\TTT_{\BB_{E'}}\big)$ (or $\in c_0^w\big(E,\TTT_{\BB_{E'}}\big)$ if $p=\infty$). Then the $p$-Schur property of $\big(E,\TTT_{\BB_{E'}}\big)$ implies that $\sigma(E,E'){\restriction}_A =\TTT_{\BB_{E'}}{\restriction}_A$. Taking into account that $A$ is (weakly) compact it follows that the identity map $\big(A,\sigma(E,E'){\restriction}_A\big)\to \big(E,\TTT_{\BB_{E'}}\big)$ is uniformly continuous.

Finally, assume that $E$ is such that $E'_\beta$ is locally complete (if $E$ is quasibarrelled, then by Proposition 11.2.3 of \cite{Jar}, $E'_\beta$ is quasi-complete and hence locally complete).
Then $\TTT_{\BB_{E'}}$ is compatible with $\tau$ by (iv) of Lemma \ref{l:Sigma-seq}.\qed
\end{proof}


Setting $p=q$ in Theorem \ref{t:seq-DPpq} we obtain
\begin{corollary} \label{c:weak-sDPp}
Let $p\in[1,\infty]$, and let $E$ be a locally convex space. Then $E$ has the sequential $DP_p$ property if and only if each relatively weakly sequentially $p$-compact subset of $E$  is a $p$-$(V^\ast)$ set.
\end{corollary}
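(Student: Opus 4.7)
The plan is simply to derive this corollary as an immediate specialization of Theorem \ref{t:seq-DPpq}, which I would apply with the parameter $q$ set equal to $p$. Recall that by Definition \ref{def:DP-general}, the sequential $DP_p$ property is precisely the sequential $DP_{(p,p)}$ property, so the hypothesis of the corollary matches condition (vii) of Theorem \ref{t:seq-DPpq} with $q = p$.

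With this substitution, condition (viii) of Theorem \ref{t:seq-DPpq} reads: each relatively weakly sequentially $p$-compact set in $E$ is a $p$-$(V^\ast)$ set, which is exactly the statement of the corollary. Therefore invoking the equivalence (vii)$\Leftrightarrow$(viii) of Theorem \ref{t:seq-DPpq} closes the proof in one line.

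There is no real obstacle here, since Theorem \ref{t:seq-DPpq} already carries out the substantive work in both directions: the forward implication uses the fact that a relatively weakly sequentially $p$-compact set contains, for any witnessing sequence $\{a_n\}$ with $|\langle\chi_n,a_n\rangle|\geq\varepsilon$, a weakly $p$-convergent subsequence contradicting $\langle\chi_{n_k},a_{n_k}\rangle\to 0$; the reverse implication simply observes that $\{x_n\}_{n\in\omega}$ itself, being weakly $p$-summable, is weakly sequentially $p$-compact and hence a $p$-$(V^\ast)$ set, so $|\langle\chi_n,x_n\rangle|\leq \sup_i |\langle\chi_n,x_i\rangle|\to 0$. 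Since both arguments go through unchanged when $q=p$, no additional verification is needed beyond citing the theorem.
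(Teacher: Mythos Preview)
Your proposal is correct and follows exactly the paper's own approach: the corollary is obtained simply by setting $q=p$ in Theorem~\ref{t:seq-DPpq} and invoking the equivalence (vii)$\Leftrightarrow$(viii). The additional commentary you give about how the two directions work in Theorem~\ref{t:seq-DPpq} is accurate but unnecessary for the corollary itself.
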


Below we give several examples which show that all Dunford--Pettis type properties are distinct in general. In particular, we give the first example of a locally convex space $E$ with the strict $DP$ property and the sequential $DP$ property but which does not have the $DP$ property.
Recall that a Tychonoff space $X$ is called an {\em $F$-space} if every cozero-set $A$ in $X$ is $C^\ast$-embedded. For  numerous equivalent conditions for a Tychonoff space $X$ being an $F$-space see \cite[14.25]{GiJ}. In particular, the Stone--\v{C}ech compactification $\beta \Gamma$ of a discrete infinite space $\Gamma$ is a compact $F$-space.

\begin{example} \label{exa:SDP-non-DP}
Let $K$ be an infinite compact $F$-space, $E=C(K)$ and let $H:=\big( E', \mu(E',E)\big)$. Then:
\begin{enumerate}
\item[{\rm(i)}] $H$ is a complete, semi-reflexive, Schur space;
\item[{\rm(ii)}] $H$ is neither reflexive nor even quasibarrelled;
\item[{\rm(iii)}] $H$ has the strict $DP_p$ property for every  $p\in[1,\infty]$;
\item[{\rm(iv)}] $H$ has the sequential $DP_{(p,q)}$ property for each  $p,q\in[1,\infty]$;
\item[{\rm(v)}] $H$ does not have the $DP$ property.
\end{enumerate}
\end{example}

\begin{proof}
(i) The space $H$ is a complete  Schur space by Proposition 3.5 of \cite{Gabr-free-resp}. Since $H'_\beta=E_\beta$ and $E$ is a Banach space we obtain  $H'_\beta=E$ and hence $H''=H$ algebraically, that is $H$ is semi-reflexive.
\smallskip

(ii) As we showed in (i) $H'_\beta=E$. Since the Banach space $E=C(K)$ is not reflexive, the space $H$ also is not reflexive. The space $H$ is not quasibarrelled because, otherwise, it would be reflexive by Proposition 11.4.2 of \cite{Jar}.
\smallskip

(iii) By (v) of Lemma \ref{l:Sigma'}, we have $\tau_{\Sigma'}(H)=\mu(E',E)$. Therefore, by (i), the space $(H,\tau_{\Sigma'})$ has the Schur property and hence the $p$-Schur property. Thus, by Corollary \ref{c:semi-reflexive-strict-DP}, 
$H$ has the strict $DP_p$ property.
\smallskip

(iv) 
Let $S$ be a weakly $q$-summable sequence in $E$. Then, by the Krein theorem, the bipolar $S^{\circ\circ}$ of $S$ is a weakly compact subset of $E=H'_\beta$. Therefore, by the Mackey--Arens theorem, the polar $S^\circ$ is a neighborhood of zero in $H$ and hence $S^{\circ\circ}$ is equicontinuous. Whence $S^{\circ\circ}\in \Sigma'(H')$ and hence the polar topology $\TTT_{\BB_{H'}}$ is weaker than $\tau_{\Sigma'}$ (recall that here $\BB_{H'}$  denotes the family of all weakly $q$-convergent sequences in $H'_\beta=E$). By (i) of Lemma \ref{l:Sigma-seq}, $\sigma(H,H')\subseteq \TTT_{\BB_{H'}}$. Since  $(H,\tau_{\Sigma'})$ has the $p$-Schur property (because $H$ has even the Schur property and $\tau_{\Sigma'}\subseteq \mu(E',E)$ by (i)  of Lemma \ref{l:Sigma'}), it follows that also the space $\big(H,\TTT_{\BB_{H'}}\big)$ has the  $p$-Schur property. Observe also that
\[
\sigma(H,H')\subseteq \TTT_{\BB_{H'}} \subseteq \tau_{\Sigma'}\subseteq \mu(E',E)
\]
and hence $\TTT_{\BB_{H'}}$ is compatible with the topology $\mu(E',E)$ of $H$. Thus, by Theorem \ref{t:seq-DPpq}, $H$ has the sequential $DP_{(p,q)}$ property.
\smallskip

(v) Let $B^\ast$ be the closed unit ball of the Banach dual $E'_\beta$. Then $B^\ast$ is a weakly compact subset of $H$ by the Alaoglu theorem. By Proposition 3.5 of \cite{Gabr-free-resp}, the space $H$ is a complete Schur space such that $B^\ast$ is not a compact subset of $H$ (so $H$ does not have the weak Glicksberg property). Taking into account that $H$ is complete, $B^\ast$ is not precompact in $H$. Since, by (iii) of Lemma \ref{l:Sigma'},  $\tau_{\Sigma'}(H)=\mu(E',E)$ we obtain that $B^\ast$ is not  $\tau_{\Sigma'}(H)$-precompact. Thus, by (iv) of  Theorem \ref{t:def-DP}, the space $H$ does not have the $DP$ property.\qed
%
\end{proof}

In Theorem 1.6 of \cite{Gabr-free-lcs} we proved that the free lcs $L(X)$ over any Tychonoff space $X$ has the $DP$ property. Below we generalize this result.

\begin{proposition} \label{p:L(X)-DP}
Let $p\in[1,\infty]$, and let $L(X)$ be the free locally convex space over a Tychonoff space $X$. Then:
\begin{enumerate}
\item[{\rm(i)}] $L(X)$  has the $DP$ property and the strict $DP_p$ property;
\item[{\rm(ii)}] every locally convex space $E$ is a quotient space of a space with the $DP$ property and the strict $DP_p$ property;
consequently, quotient spaces of spaces with the $DP$ property and the strict $DP_p$ property may not have the $DP$ property or the strict $DP_p$ property;
\item[{\rm(iii)}] if $X$ is a non-discrete metrizable space, then $L(X)$ does not have the sequential $DP$ property.
\end{enumerate}
\end{proposition}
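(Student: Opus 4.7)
For part (i), the plan is to invoke the result from \cite{Gab-Respected}, already cited in the paper, that $L(X)$ has the Glicksberg property for every Tychonoff space $X$. Applying Lemma~\ref{l:weak-Glick}, $L(X)$ has the weak Glicksberg property, and then Corollary~\ref{c:weak-Glick-DP}(i) gives the $DP$ property. To obtain the strict $DP_p$ property, I would first verify that the Glicksberg property forces the Schur property: if $\{x_n\}_{n\in\w}$ is weakly null in $L(X)$, then $K:=\{x_n\}_{n\in\w}\cup\{0\}$ is weakly compact, hence compact in $L(X)$ by the Glicksberg property. Were $\{x_n\}$ not convergent to $0$ in $L(X)$, some subsequence would avoid a closed neighborhood of $0$; extracting an original cluster point in the compact Hausdorff set $K$, that cluster point would also be a weak cluster point and would therefore coincide with the unique weak limit $0$, a contradiction. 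Thus $L(X)$ has the Schur property, hence the $p$-Schur property, so Corollary~\ref{c:weak-Glick-DP}(ii) yields the strict $DP_p$ property.

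For part (ii), the universal property of $L(E)$ applied to $\id_E\colon E\to E$ (regarding $E$ as a Tychonoff space on the domain side) produces a continuous linear map $\Psi:=\Psi_E(\id_E)\colon L(E)\to E$ satisfying $\Psi\circ i=\id_E$. The map $\Psi$ is surjective, and it is a topological quotient map: if $V\subseteq E$ satisfies $\Psi^{-1}(V)$ open in $L(E)$, then $V=i^{-1}\bigl(\Psi^{-1}(V)\bigr)$ is open in $E$ by continuity of $i$, while the converse is trivial. Hence $E$ is a quotient of $L(E)$, which has the $DP$ and strict $DP_p$ properties by (i), proving the first assertion. For the counterexample, take $E=\ell_2$: its identity operator is weakly compact by reflexivity, but the canonical unit basis is weakly null and not norm null, and is moreover weakly $p$-summable for every $p\geq 2$ (including $p=\infty$); thus $\ell_2$ lacks both the $DP$ property and the strict $DP_p$ property for $p\geq 2$, yet it is a quotient of $L(\ell_2)$, which has both.

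For part (iii), my plan is to produce two weakly null sequences, one in $L(X)$ and one in $L(X)'_\beta$, whose pairing is identically $1$. Using the non-discreteness and metrizability of $X$, choose distinct points $\{x_n\}_{n\in\w}\subseteq X$ converging to some $x_\infty\in X\setminus\{x_n:n\in\w\}$. Set $\mu_n:=\delta_{x_n}-\delta_{x_\infty}\in L(X)$; for each $f\in C(X)=L(X)'$ one has $\langle f,\mu_n\rangle=f(x_n)-f(x_\infty)\to 0$, so $\{\mu_n\}$ is weakly null in $L(X)$. Using metrizability, pick pairwise disjoint open neighborhoods $U_n$ of $x_n$ with $x_\infty\notin U_n$, and by Urysohn continuous functions $\chi_n\colon X\to[0,1]$ supported in $U_n$ with $\chi_n(x_n)=1$; then $\langle\chi_n,\mu_n\rangle=1$ for every $n$. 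The main obstacle is verifying that $\{\chi_n\}$ is weakly null in $L(X)'_\beta$: this requires the standard description of bounded subsets of $L(X)$ for metrizable $X$ (their supports sit in a compact subset of $X$, whereby $L(X)'_\beta$ essentially carries the compact-open topology of $C(X)$), combined with a dominated-convergence argument against the elements of $(L(X)'_\beta)'$. This step adapts the argument of Theorem~1.7 of \cite{Gabr-free-lcs}.
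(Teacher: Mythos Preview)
Your proposal is correct and follows the paper's approach in all three parts: invoking the Glicksberg property of $L(X)$ from \cite{Gab-Respected} together with Lemma~\ref{l:weak-Glick} and Corollary~\ref{c:weak-Glick-DP} for (i), the universal property of $L(E)$ for (ii), and Theorem~1.7 of \cite{Gabr-free-lcs} for (iii). You simply make explicit a couple of steps the paper leaves implicit, notably the Glicksberg $\Rightarrow$ Schur $\Rightarrow$ $p$-Schur chain needed to apply Corollary~\ref{c:weak-Glick-DP}(ii), and the quotient-map verification via the continuous section $i$ in (ii).
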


\begin{proof}
(i) By Theorem 1.2 of \cite{Gab-Respected}, the space $L(X)$ has the Glicksberg property. Now Lemma \ref{l:weak-Glick} and Corollary \ref{c:weak-Glick-DP}  apply.

(ii) Consider the identity map $\Id_E:E\to E$. Then, by the definition of $L(E)$, the operator $\Id_E$ can be extended to an operator $T$ from $L(E)$ onto $E$. Since $E$ is a closed subspace of $L(E)$ it follows that $T$ is a quotient map. Finally, (i) applies.

(iii) This is Theorem 1.7 of \cite{Gabr-free-lcs}.\qed
\end{proof}

Although the weak Glicksberg property implies the $DP$ property (see (i) of Corollary \ref{c:weak-Glick-DP}), the converse is not true in general as the following example shows.

\begin{example} \label{exa:DP-non-weak-Glic}
The Banach space $C([0,\w])$ has the $DP$-property but it does not have the weak Glicksberg property.
\end{example}

\begin{proof}
By the Grothendieck theorem, $C([0,\w])$ has the $DP$-property.

To show that  $C([0,\w])$ does not have the weak Glicksberg property (we assume that the field is $\IR$), we consider the following subset of $C([0,\w])$
\[
K=\Big\{f\in C([0,\w]): \sum_{n\in\w} |f(n)|\leq 1\Big\}.
\]
It is clear that $K$ is absolutely convex. To show that $K$ is weakly compact, by the Eberlein--\v{S}mulian theorem, we shall prove that $K$ is weakly sequentially compact. Let $S=\{g_i\}_{i\in\w}$ be a sequence in $K$. Considering $S$ as a subset of the compact metric space $[-1,1]^{[0,\w]}$ without loss of generality we can assume that $S$ pointwise converges to a function $g\in [-1,1]^{[0,\w]}$. As $\sum_{n\in\w} |g_i(n)|\leq 1$ for every $i\in\w$, it follows that also  $\sum_{n\in\w} |g(n)|\leq 1$ and hence $g$ is continuous. The Lebesgue dominated convergence theorem implies  that $g_i $ weakly converges to $g$. Thus $K$ is an absolutely convex weakly compact subset of $C([0,\w])$.

On the other hand, for every $n\in\w$, let $f_n:[0,\w]\to \{0,1\}$ be the unique function such that $f^{-1}(1)=\{n\}$. Then $f_n\in K$. Since $\|f_n-f_m\|_{\infty}=1$ for all distinct $n,m\in\w$, it follows that $K$ is not a compact subset of $C([0,\w])$. Thus $C([0,\w])$ does not have the weak Glicksberg property.\qed
\end{proof}
In other words, Example \ref{exa:DP-non-weak-Glic} shows that there are even Banach spaces $E$  such that the space $(E,\tau_{\Sigma'})$ has the weak Glicksberg property, but the space $E$ does not have the weak Glicksberg property. On the other hand, Example \ref{exa:weak-Glick-non-Glick} below shows that Mackey spaces may have the weak Glicksberg property, but they do not have even the Schur property. To prove this example we need the next assertion.

\begin{proposition} \label{p:weak-Glick}
Let $E$ be a Mackey space which carries its weak topology. Then the space $H:=(E', \mu(E',E))$ has the weak Glicksberg property.
\end{proposition}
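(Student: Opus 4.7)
The plan is to exploit the hypothesis $E=E_\mu=E_w$ via the Mackey--Arens theorem to force every absolutely convex $\sigma(E',E)$-compact subset of $E'$ to be finite-dimensional; once that is in hand, all linear topologies on such a set coincide and the conclusion is immediate. First I would pin down the duality: since $\mu(E',E)$ is by construction compatible with the pair $\langle E',E\rangle$, we have $H'=E$ algebraically and $\sigma(H,H')=\sigma(E',E)$. So an absolutely convex weakly compact subset of $H$ is precisely an absolutely convex $\sigma(E',E)$-compact subset $K$ of $E'$, and the goal becomes to show that every such $K$ is compact in $\mu(E',E)$.

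The core step is as follows. Because $E$ carries its weak topology and is Mackey, $\sigma(E,E')=\mu(E,E')$. By Mackey--Arens, $\mu(E,E')$ has a base of zero-neighborhoods given by polars $L^\circ$ of absolutely convex $\sigma(E',E)$-compact sets $L\subseteq E'$, while $\sigma(E,E')$ has a base of polars of finite subsets of $E'$. Applying this equality of topologies to $L=K$, I obtain $\chi_1,\dots,\chi_n\in E'$ with $\{\chi_1,\dots,\chi_n\}^\circ\subseteq K^\circ$. Taking polars in $E'$ and invoking the bipolar theorem in the dual pair $\langle E,E'\rangle$ (and using that the absolutely convex hull of a finite set is already weakly closed), one obtains
\[
K=K^{\circ\circ}\subseteq\{\chi_1,\dots,\chi_n\}^{\circ\circ}=\acx(\chi_1,\dots,\chi_n),
\]
so $K$ sits inside the finite-dimensional subspace $F:=\spn(\chi_1,\dots,\chi_n)$ of $E'$.

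To finish, I would observe that on the finite-dimensional space $F$ every Hausdorff locally convex topology is the unique canonical one, so $\sigma(E',E){\restriction}_F$ and $\mu(E',E){\restriction}_F$ coincide; consequently $K$, being $\sigma(E',E)$-compact, is also compact in $H=(E',\mu(E',E))$. This gives the weak Glicksberg property for $H$. I do not anticipate a serious obstacle; the only delicate point is to apply the bipolar theorem in the correct duality, taking polars inside $E'$ with respect to $\langle E,E'\rangle$ rather than inside $E'_\beta$. Once that bookkeeping is set straight, the argument is entirely routine.
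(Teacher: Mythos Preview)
Your argument is correct and follows essentially the same route as the paper: identify $H_w$ with $(E',\sigma(E',E))$, use $\sigma(E,E')=\mu(E,E')$ and Mackey--Arens to see that $K^\circ$ is a weak neighborhood of zero in $E$, deduce via (bi)polars that $K$ lies in a finite-dimensional subspace of $E'$, and conclude that $K$ is $\mu(E',E)$-compact. Your write-up is a bit more explicit about the bipolar step, but the underlying idea is identical.
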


\begin{proof}
Let $K$ be an absolutely convex compact subset of $H_w=E'_{w^\ast}$. Since $E$ is a Mackey space, the Mackey--Arens theorem implies that the polar $K^\circ$ is a neighborhood of zero in $E$. Taking into account that the topology of $E$ is the weak topology we obtain that there is a finite subset $F$ of $E'$ such that $K$ is a compact subset of the finite-dimensional subspace $\spn(F)$ of $E'$. Therefore $K$ is also compact in  $H$. Thus $H$  has the weak Glicksberg property.\qed
\end{proof}

\begin{example} \label{exa:weak-Glick-non-Glick}
Let $\mathbf{s}=[0,\w]$ be a convergent sequence, and let $L_\mu(\mathbf{s})$ be the free locally convex space $L(\mathbf{s})$ over $\mathbf{s}$ endowed with the Mackey topology. Then:
\begin{enumerate}
\item[{\rm(i)}] $L_\mu(\mathbf{s})$ has the weak Glicksberg property and hence the $DP$ property;
\item[{\rm(ii)}] $L_\mu(\mathbf{s})$ has neither the Schur property nor the strict $DP$ property.
\end{enumerate}
\end{example}

\begin{proof}
(i) Since the topology of $L(\mathbf{s})$ is compatible with the duality $\big(C_p(\mathbf{s}),L(\mathbf{s})\big)$ and the space $C_p(\mathbf{s})$ is a Mackey space with its weak topology, Proposition \ref{p:weak-Glick} implies that $L_\mu(\mathbf{s})$ has the weak Glicksberg property. Therefore, by (i) of Corollary \ref{c:weak-Glick-DP}, $L_\mu(\mathbf{s})$ has the  $DP$ property.

(ii) The space $L_\mu(\mathbf{s})$ does not have the Schur property by Remark 6.5 of \cite{Gab-Respected} (for a detailed proof, see Example 3.12 of \cite{BG-free}).
To show that $L_\mu(\mathbf{s})$ does not have the  strict $DP$ property we recall first that, by Proposition 3.4 of \cite{Gabr-free-lcs},  $\big(L_\mu(\mathbf{s})\big)'_\beta =C(\mathbf{s})$. Consider the set $K\subseteq C(\mathbf{s})$ defined in Example \ref{exa:DP-non-weak-Glic}, where it is proved that $K$ is an absolutely convex and weakly compact. Therefore $K$ is also weak$^\ast$ compact and hence, by the Mackey--Arens theorem, $K^\circ$ is a neighborhood of zero in $L_\mu(\mathbf{s})$. Whence $K=K^{\circ\circ}$ is equicontinuous. Hence $K\in \Sigma'\big(C(\mathbf{s})\big)$ and $K^\circ\in \tau_{\Sigma'}$.

Consider the sequence $\{\delta_n\}_{n\in\w}$ of Dirac measures in $L_\mu(\mathbf{s})$, where $\langle\delta_n,f\rangle=f(n)$ for $f\in C(\mathbf{s})$. If $f\in C(\mathbf{s})=L_\mu(\mathbf{s})'$, then $f(n)\to f(\omega)$ and hence $\langle f,\delta_n\rangle\to \langle f,\delta_\omega\rangle$. Therefore $\{\delta_n\}_{n\in\w}$ is weakly Cauchy in $L_\mu(\mathbf{s})$. On the other hand, for all distinct $n,m\in\w$,  if $f_n:[0,\w]\to \{0,1\}$ is the unique function such that $f^{-1}(1)=\{n\}$, then
\[
\sup_{f\in K} \big| \langle f,\delta_n-\delta_m\rangle\big| \geq  \big| \langle f_n,\delta_n-\delta_m\rangle\big| =1
\]
and hence $\delta_n-\delta_m\not\in \tfrac{1}{2} K^\circ\in \tau_{\Sigma'}$. Therefore the sequence $\{\delta_n\}_{n\in\w}$ is not Cauchy for the topology $\tau_{\Sigma'}$. By (ii) of Theorem \ref{t:DP}, the space $L_\mu(\mathbf{s})$  does not have the  strict $DP$ property. \qed
\end{proof}

It is natural to define the next version of the Dunford--Pettis property.

\begin{definition} \label{def:DP-weak}{\em
Let $p\in[1,\infty]$. A locally convex space $E$ is said to have the {\em weak Dunford--Pettis property of order $p$} (the {\em weak $DP_p$ property}) if for each Banach space $L$, any weakly compact operator $T\in\LL(E,L)$ is $p$-convergent.\qed}
\end{definition}

Below we characterize the weak $DP_p$ property.
\begin{theorem} \label{t:weak-DPp}
Let $p\in[1,\infty]$. For a locally convex space $E$ the following assertions are equivalent:
\begin{enumerate}
\item[{\rm(i)}] $E$ has the weak $DP_p$ property;
\item[{\rm(ii)}] for  every  Banach space $L$, each weakly compact operator $T\in\LL(E,L)$ transforms every weakly sequentially $p$-precompact set into a relatively compact subset of $L$;
\item[{\rm(iii)}] for  every  Banach space $L$, each weakly compact operator $T\in\LL(E,L)$ transforms every weakly sequentially $p$-compact set into a relatively compact subset of $L$.
\end{enumerate}
\end{theorem}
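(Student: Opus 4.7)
The plan is to mimic the structure of the proof of Theorem \ref{t:strict-DPp} (the analogous result for the strict $DP_p$ property), establishing the cycle (i)$\Rightarrow$(ii)$\Rightarrow$(iii)$\Rightarrow$(i). The key observation is that a Banach space $L$ is a complete metric space, so on subsets of $L$ the notions of ``relatively compact'', ``relatively sequentially compact'' and ``precompact'' coincide, which lets us move freely between sequential and topological compactness statements.

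For (i)$\Rightarrow$(ii), I would argue as follows. Let $T\in\LL(E,L)$ be a weakly compact operator into a Banach space. By (i) the operator $T$ is $p$-convergent. Since $T$ is continuous (hence weak-weak continuous, hence weak-weak sequentially continuous) and $L$ is sequentially complete, Proposition \ref{p:p-convergent-s} applies and yields that $T(A)$ is relatively sequentially compact in $L$ for every weakly sequentially $p$-precompact subset $A$ of $E$. In the Banach space $L$ this is the same as $T(A)$ being relatively compact, which is exactly (ii).

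The implication (ii)$\Rightarrow$(iii) is immediate because every weakly sequentially $p$-compact set is, in particular, weakly sequentially $p$-precompact: a weakly $p$-convergent subsequence is automatically weakly $p$-Cauchy.

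The main step is (iii)$\Rightarrow$(i). Let $T\in\LL(E,L)$ be a weakly compact operator into a Banach space, and let $\{x_n\}_{n\in\w}$ be a weakly $p$-summable sequence in $E$. The natural candidate set $\{x_n:n\in\w\}$ need not be weakly sequentially $p$-compact since $0$ may fail to belong to it, so I would instead consider $A:=\{x_n:n\in\w\}\cup\{0\}$. Any sequence in $A$ either has a constant subsequence (which trivially weakly $p$-converges in $A$) or has a subsequence extracted from $\{x_n\}_{n\in\w}$; using that subsequences of a weakly $p$-summable sequence are again weakly $p$-summable, such a subsequence weakly $p$-converges to $0\in A$. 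Hence $A$ is weakly sequentially $p$-compact. By (iii), $T(A)$ is relatively compact, hence precompact, in $L$. Since $T$ is weak-weak continuous and $x_n\to 0$ weakly, the sequence $\{T(x_n)\}_{n\in\w}$ is weakly null and precompact, so Lemma \ref{l:null-seq} gives $T(x_n)\to 0$ in norm. Thus $T$ is $p$-convergent, establishing (i).

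The only mildly delicate point, and the one I would flag as the main obstacle, is the realization in (iii)$\Rightarrow$(i) that one must adjoin the point $0$ to the range of the weakly $p$-summable sequence in order to apply the hypothesis on weakly sequentially $p$-\emph{compact} sets rather than merely relatively so; once this is done, the combination of Proposition \ref{p:p-convergent-s} and Lemma \ref{l:null-seq} handles the rest essentially as in the proof of Theorem \ref{t:strict-DPp}.
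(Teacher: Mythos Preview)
Your proof is correct and follows essentially the same approach as the paper's own proof, cycling (i)$\Rightarrow$(ii)$\Rightarrow$(iii)$\Rightarrow$(i) via Proposition~\ref{p:p-convergent-s} and Lemma~\ref{l:null-seq}. Your care in adjoining $0$ to $\{x_n:n\in\w\}$ in (iii)$\Rightarrow$(i) is in fact slightly more precise than the paper, which simply asserts that $S=\{x_n\}_{n\in\w}$ is weakly sequentially $p$-compact without noting that the limit $0$ need not lie in $S$.
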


\begin{proof}
(i)$\Ra$(ii) Let $T:E\to L$  be a weakly compact  operator from $E$ into a Banach space $L$. Since $E$ has the weak $DP_p$ property, $T$ is $p$-convergent. Let $A$ be a weakly sequentially $p$-precompact subset of $E$. Then, by Proposition \ref{p:p-convergent-s}, $T(A)$ is  relatively sequentially compact in $L$. As $L$ is a Banach space, it follows that $T(A)$ is relatively compact in $L$, as desired.

(ii)$\Ra$(iii) is trivial.

(iii)$\Ra$(i) Let $T:E\to L$  be  a weakly compact  operator from $E$ into a Banach space $L$. Fix a weakly $p$-summable sequence $S=\{x_n\}_{n\in\w}$ in $E$. Since $S$ is weakly sequentially $p$-compact, by (iii), the set $T(S)=\{T(x_n)\}_{n\in\w}$ is relatively compact in $L$. As $S$ is a weakly null-sequence, Lemma \ref{l:null-seq} implies that $T(x_n)\to 0$ in the norm topology of $L$.  Thus $T$ is $p$-convergent.\qed
\end{proof}


\section{Permanent properties} \label{sec:perm-DP}


We start from the following lemma which summarazes some relationships between the Dunford--Pettis type properties introduced in Definition \ref{def:DP-general}.
\begin{lemma} \label{l:relations-DP-parameter}
Let $1\leq q\leq p\leq\infty$, $1\leq q'\leq p'\leq\infty$, and let $E$ be a locally convex space.
\begin{enumerate}
\item[{\rm(i)}]  $E$ has the strict $DP$ property if and only if it has the strict $DP_\infty$ property.
\item[{\rm(ii)}] If $E$ has the strict $DP_p$ property, then it has the weak $DP_p$ property. The converse is true if $E$ is a normed space or $E=E_w$.
\item[{\rm(iii)}] If $E$ is locally complete and has the $DP$ property, then it has the weak $DP_\infty$ property. 
\item[{\rm(iv)}]  If $E$ has the strict {\rm(}resp., weak{\rm)} $DP_p$ property, then $E$ has the strict  {\rm(}resp., weak{\rm)} $DP_q$ property. 
\item[{\rm(v)}] If $E$ has the sequential $DP_{(p,p')}$ property, then $E$ has the sequential $DP_{(q,q')}$ property.
\end{enumerate}
\end{lemma}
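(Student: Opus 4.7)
The proof of Lemma~\ref{l:relations-DP-parameter} is an exercise in unwinding definitions, resting on two monotonicity principles: (a) $\ell_q\subseteq \ell_p$ for $q\le p$ (together with $c_0\subseteq \ell_p$ when $p=\infty$ in the conventions of this paper), so every weakly $q$-summable sequence is weakly $p$-summable; and (b) every weakly compact operator from $E$ into a Banach space $L$ sends bounded sets to relatively weakly compact sets, since if $T(U)$ is relatively weakly compact for some $U\in \mathcal{N}_0(E)$, then for a bounded $B$ one has $B\subseteq \lambda U$ and hence $T(B)\subseteq \lambda T(U)$ is relatively weakly compact. Clauses (iv) and (v) are immediate consequences of (a): the defining test sequences for the weaker-parameter property form a subfamily of those for the stronger-parameter property, so any condition holding uniformly on the larger family holds on the smaller one.

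For clause (i), using Definitions~\ref{def:DP} and~\ref{def:DP-general}, I have to show that for an operator $T$ into a Banach space $L$ which sends bounded sets to relatively weakly compact sets, being $\infty$-convergent (i.e.\ sending weakly null to norm null) is equivalent to sending weakly Cauchy sequences to norm Cauchy sequences. The implication ``strict $DP$ $\Rightarrow$ strict $DP_\infty$'' uses an interleaving trick: for weakly null $\{x_n\}$, the interleaved sequence $0,x_0,0,x_1,\ldots$ is weakly null hence weakly Cauchy, so its $T$-image is norm Cauchy, and the all-zero subsequence pins the Cauchy limit at $0$, forcing $T(x_n)\to 0$. The reverse implication is cleaner: given a weakly Cauchy $\{x_n\}$ and strictly increasing $(n_k),(j_k)$, the differences $x_{n_k}-x_{j_k}$ are weakly null, so $T(x_{n_k})-T(x_{j_k})\to 0$ in norm, proving $\{T(x_n)\}$ is Cauchy.

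For clause (ii), the forward implication ``strict $DP_p$ $\Rightarrow$ weak $DP_p$'' is immediate from (b), since weakly compact operators satisfy the hypothesis appearing in the strict $DP_p$ property. When $E$ is normed, the hypothesis ``bounded sets to relatively weakly compact sets'' reduces to $T(B_E)$ being relatively weakly compact, which is the very definition of weak compactness, making strict and weak $DP_p$ coincide. When $E=E_w$, Lemma~\ref{l:weak-to-norm} forces every operator from $E$ into a normed space to be finite-dimensional, and such operators are automatically $p$-convergent (a weakly $p$-summable sequence in a finite-dimensional range is weakly null and therefore norm null), so both properties hold trivially. For clause (iii), given a weakly compact operator $T:E\to L$ into a Banach space and a weakly null $\{x_n\}\subseteq E$, local completeness together with Theorem~\ref{t:weakly-p-lc} applied at $p=\infty$ yields that $\cacx(\{x_n\})$ is an absolutely convex weakly compact subset of $E$; by (b), $T$ sends bounded sets to relatively weakly compact sets, so the $DP$ property forces $T(\cacx(\{x_n\}))$ to be relatively compact in $L$. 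Hence $\{T(x_n)\}$ is weakly null and norm precompact, and Lemma~\ref{l:null-seq} concludes $T(x_n)\to 0$ in norm, i.e.\ $T$ is $\infty$-convergent.

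The main obstacle is clause (iii): it is the only point where the argument requires a genuine analytic input rather than pure definition-chasing, namely the use of local completeness (via Theorem~\ref{t:weakly-p-lc}) to upgrade a weakly null sequence to a weakly compact absolutely convex set on which the $DP$ property can act. Every other implication reduces to the two monotonicity principles above together with the standard interleaving argument.
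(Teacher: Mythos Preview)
Your proof is correct and follows essentially the same approach as the paper's, just with the details filled in where the paper simply writes ``(i) and (iv)--(v) follow from the corresponding definitions'' and ``(ii) is clear''. Your argument for (iii) matches the paper's almost verbatim (local completeness via Theorem~\ref{t:weakly-p-lc} to get $\cacx(\{x_n\})$ weakly compact, then the $DP$ property and Lemma~\ref{l:null-seq}). Two minor remarks: in (a) the parenthetical should read $\ell_q\subseteq c_0$ for $q<\infty$ rather than $c_0\subseteq\ell_p$; and in (i) your interleaving trick is valid but slightly roundabout---since a weakly null sequence is already weakly Cauchy, one can apply the strict $DP$ property directly to $\{x_n\}$ to get $\{T(x_n)\}$ Cauchy, hence convergent, and the weak limit identifies the norm limit as $0$.
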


\begin{proof}
(i) and (iv)-(v) follow from the corresponding definitions. 

(ii) is clear. The converse assertion is true by Lemma \ref{l:weak-to-norm}.
%
%

(iii) Let $T$ be a weakly compact operator from $E$ to a Banach space $L$. Fix a weakly null sequence $S=\{x_n\}_{n\in\w}$ in $E$. Since $E$ is locally complete, $K:= \cacx(S)$ is a weakly compact subset of $E$. Then, by the $DP$ property of $E$, $T(K)$ is a relatively compact subset of $L$. Since $T(S)$ is a weakly null sequence in $L$, Lemma \ref{l:null-seq} implies that $T(x_n)\to 0$ in $L$. Thus $T$ is $\infty$-convergent. \qed
\end{proof}

The next proposition shows some heredity of Dunford--Pettis type properties with respect to different (compatible) topologies.

\begin{proposition} \label{p:weak-top-DP}
Let $p,q\in[1,\infty]$, and let $(E,\tau)$ be a locally convex space.
\begin{enumerate}
\item[{\rm(i)}] If $\TTT$ is a locally convex vector topology on $E$  such that $\sigma(E,E')\subseteq \TTT\subseteq \tau$ and if $E$ has the $DP$ property $($resp., the strict $DP_p$ property or the weak $DP_p$ property$)$, then also $(E,\TTT)$ has the same property.
\item[{\rm(ii)}] If $E$ or $E'_\beta$ is feral, then $E$ has the sequential $DP_{(p,q)}$ property. In particular, $\IF^\kappa$ has the sequential $DP_{(p,q)}$ property for every cardinal number $\kappa$.
\item[{\rm(iii)}] If $\TTT$ is a locally convex vector topology on $E$ compatible with $\tau$, then $(E,\tau)$ has  the sequential $DP_{(p,q)}$ property if and only if $(E,\TTT)$ has the sequential $DP_{(p,q)}$ property.
\item[{\rm(iv)}]  If $E=E_w$, then $E$ has  the $DP$ property, the strict $DP_p$ property and the weak $DP_p$ property. If moreover $E$ is quasibarrelled, then $E$ has the sequential $DP_{(p,q)}$ property.
\end{enumerate}
\end{proposition}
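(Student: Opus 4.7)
The plan is to handle the four parts in order, reducing (iv) to (i) and (ii) at the end. For (i), the hypothesis $\sigma(E,E')\subseteq \TTT\subseteq\tau$ forces $(E,\TTT)'=E'$, so $(E,\tau)$ and $(E,\TTT)$ share the same dual, the same weak topology and -- by the Mackey theorem -- the same bounded sets, and hence the same families of weakly compact sets, weakly sequentially $p$-compact sets, and weakly $p$-summable sequences. Since $\TTT\subseteq\tau$, every $T\in\LL((E,\TTT),L)$ also belongs to $\LL((E,\tau),L)$, and every $U\in\Nn_0(E,\TTT)$ also belongs to $\Nn_0(E,\tau)$; the conditions ``transforms bounded sets into relatively weakly compact sets'', ``is weakly compact'' and ``is $p$-convergent'' therefore transfer intact between $(E,\tau)\to L$ and $(E,\TTT)\to L$, and the $DP$, strict $DP_p$ and weak $DP_p$ properties pass to $(E,\TTT)$.

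For (ii), suppose first that $E$ is feral. Given a weakly $p$-summable $\{x_n\}\subseteq E$ and a weakly $q$-summable $\{\chi_n\}\subseteq E'_\beta$, the set $\{x_n\}$ is bounded, so $F:=\spn\{x_n\}$ is finite-dimensional; on $F$ the weak topology coincides with the original one, whence $x_n\to 0$ in $F$. Each $\chi_n$ is in particular weak$^\ast$-null, so the restrictions $\chi_n{\restriction}_F$ tend to $0$ pointwise on the finite-dimensional $F$, hence in the unique norm of $F^\ast$, and boundedness of $\{x_n\}$ in $F$ forces $\langle\chi_n,x_n\rangle\to 0$. If instead $E'_\beta$ is feral the argument is dual: $G:=\spn\{\chi_n\}$ is finite-dimensional in $E'_\beta$; choosing a basis $\eta_1,\dots,\eta_k$ of $G$ together with biorthogonal vectors in $E$, one writes $\chi_n=\sum_j c^n_j\eta_j$ and reads off $c^n_j\to 0$ from weak nullity of $\{\chi_n\}$ in $E'_\beta$ evaluated at the biorthogonal points, after which boundedness of each $(\eta_j(x_n))_n$ yields $\langle\chi_n,x_n\rangle\to 0$. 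The example $\IF^\kappa$ fits here because its strong dual is the locally convex direct sum $\bigoplus_{\alpha<\kappa}\IF$, whose bounded sets have finite support and are bounded coordinatewise, hence are finite-dimensional.

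Part (iii) is essentially a tautology: compatible topologies share $E'$, the weak topology, the bounded sets, and therefore the strong dual $E'_\beta$, so the defining condition of Definition \ref{def:DP-general} reads the same for $\tau$ and $\TTT$. For (iv), if $E=E_w$ then Lemma \ref{l:weak-to-norm} makes every $T\in\LL(E,L)$ into a normed space finite-dimensional, so $T(E)$ is a finite-dimensional subspace on which the weak and norm topologies coincide; $T$ therefore automatically sends bounded sets to relatively (weakly) compact sets, weakly compact sets to compact sets, and weakly $p$-summable sequences to norm-null sequences, delivering the $DP$, strict $DP_p$ and weak $DP_p$ properties at once. If additionally $E$ is quasibarrelled, Proposition \ref{p:strong-dual-feral} makes $E'_\beta$ feral, whereupon (ii) supplies the sequential $DP_{(p,q)}$ property.

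The principal subtle point lies in (ii): in the $E$-feral case one exploits weak$^\ast$-nullness of the functionals and finite-dimensionality of the span of the sequence in $E$; in the $E'_\beta$-feral case it is the span of the sequence in $E'$ that is finite-dimensional, and one must use full weak nullness of $\{\chi_n\}$ in $E'_\beta$ (not merely weak$^\ast$-nullness) to justify $c^n_j\to 0$. Once the correct finite-dimensional subspace is identified, every reasonable linear topology on it collapses to the unique Hausdorff one and the remaining computation is immediate.
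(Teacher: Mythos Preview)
Your proof is correct and follows essentially the same approach as the paper's: part (i) via compatibility of $\TTT$ with $\tau$ and the inclusion $\LL((E,\TTT),L)\subseteq\LL((E,\tau),L)$; part (ii) by exploiting finite-dimensionality of the span of the relevant sequence; part (iii) from $(E,\TTT)'_\beta=(E,\tau)'_\beta$; and part (iv) via Lemma~\ref{l:weak-to-norm} and Proposition~\ref{p:strong-dual-feral}. One small remark: in your closing paragraph you assert that in the $E'_\beta$-feral case ``one must use full weak nullness of $\{\chi_n\}$ in $E'_\beta$ (not merely weak$^\ast$-nullness)'', but in fact weak$^\ast$-nullness already suffices, since on the finite-dimensional subspace $G=\spn\{\chi_n\}\subseteq E'$ the weak$^\ast$ topology $\sigma(E',E){\restriction}_G$ is Hausdorff and hence coincides with the unique linear topology on $G$ --- this does not affect the validity of your argument, only the commentary.
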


\begin{proof}
(i) Let $T:(E,\TTT)\to L$ be an operator into a Banach space $L$ which transforms bounded sets into relatively weakly compact sets. As $\TTT\subseteq \tau$ it follows that $T:E\to L$ is continuous. The condition  $\sigma(E,E')\subseteq \TTT\subseteq \tau$ implies that $\TTT$ is compatible with $\tau$, and hence  $(E,\TTT)$ and $E$ have the same bounded sets. Therefore the operator $T:E\to L$ also  transforms bounded sets into relatively weakly compact sets.

Assume that  $E$ has the $DP$ property, and let $K$ be an arbitrary absolutely convex weakly compact sets of $(E,\TTT)$. Since $(E,\TTT)$ and $E$ have the same absolutely convex weakly compact sets, the $DP$ property of $E$ implies that $T(K)$ is a relatively compact subset of $L$. Thus $(E,\TTT)$ has the $DP$ property.

Assume that  $E$ has the strict $DP_p$ property. Then $T:E\to L$ is $p$-convergent. Since $(E,\TTT)$ and $E$ have the same weakly $p$-convergent sequences, it follows that $T:(E,\TTT)\to L$  is $p$-convergent. Thus $(E,\TTT)$ has the strict $DP_p$ property.


Assume that  $E$ has the weak $DP_p$ property.  Let $T:(E,\TTT)\to L$ be a weakly compact operator into a Banach space $L$. Since $\TTT\subseteq \tau$, $T$ is continuous and weakly compact as an operator from $(E,\tau)$ into $L$. By  the weak $DP_p$ property of $E$, $T$ is $p$-convergent. The inclusions $\sigma(E,E')\subseteq \TTT\subseteq \tau$  imply that every weakly $p$-convergent sequence $\{x_n\}_{n\in\w}$ of $(E,\TTT)$ is also weakly $p$-convergent for $(E,\tau)$. Therefore $T(x_n)\to 0$. Thus $(E,\TTT)$ has the weak $DP_p$ property.
\smallskip


(ii) Assume that $E$ or $E'_\beta$ is feral. We consider only the case when $E'_\beta$ is feral (the case when $E$ is feral is considered analogously). Let $S=\{ x_n\}_{n\in\w}$ be a  weakly $p$-summable sequence in $E$, and let $\{ \chi_n\}_{n\in\w}$ be a weakly $q$-summable sequence  in  $E'_\beta$. Since  $E'_\beta$ is feral, $\{ \chi_n\}_{n\in\w}$ is finite-dimensional and hence there are $\eta_1,\dots,\eta_k\in E'$ and $a_{n,1},\dots,a_{n,k}\in\IF$ such that
\[
\chi_n= a_{n,1}\eta_1 +\cdots + a_{n,k}\eta_k \quad \mbox{ for every $n\in\w$},
\]
and $\lim_{n\to\infty}a_{n,i}= 0$ for every $1\leq i\leq k$. Since $S$ is bounded, there is $C>0$ such that $\sup_{n\in\w} |\langle\eta_i, x_n\rangle| <C$ for every $1\leq i\leq k$. Then
\[
|\langle\chi_n,x_n\rangle| \leq\sum_{i=1}^k |a_{n,i}|\cdot |\langle\eta_i, x_n\rangle| \leq C \cdot \sum_{i=1}^k |a_{n,i}| \to 0,
\]
which means that $E$ has the sequential $DP_{(p,q)}$ property.

(iii) follows from the equality $(E,\TTT)'_\beta=E'_\beta$ and the fact that $(E,\TTT)$ and $E$ have the same weakly $p$-summable sequences.

(iv) By Lemma \ref{l:weak-to-norm}, {\em every} operator $T:E\to L$  into a Banach space $L$ is finite-dimensional. Hence $T$ trivially transforms absolutely convex weakly compact sets of $E$ into relatively compact sets of $L$.  Thus $E$ has  the $DP$ property.
Since $T$ is finite-dimensional, $T$ is also $p$-convergent. Thus $E$ has the strict $DP_p$ property and hence also the weak $DP_p$ property.

Assume that $E$ is quasibarrelled. Then, by Proposition \ref{p:strong-dual-feral}, the space $E'_\beta$ is feral. Thus, by  (ii), $E$ has the sequential $DP_{(p,q)}$ property. \qed
\end{proof}

 The next corollary essentially generalizes (i), (ii) and (v) of Theorem \ref{t:function-DP}.
\begin{corollary} \label{c:DP-weaker}
Let $p,q\in[1,\infty]$, $X$ be a Tychonoff space, and let $E$ be a subspace of the product $\IF^X$ containing $C_p(X)$. Then $E$ has the $DP$ property, the strict $DP_p$ property and the sequential $DP_{(p,q)}$ property.
\end{corollary}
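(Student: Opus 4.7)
The foundational observation is that $\IF^X$, being a product of copies of $\IF$ (each of which trivially carries its weak topology), carries its own weak topology; hence so does every subspace, giving $E = E_w$. By Proposition \ref{p:weak-top-DP}(iv), $E$ has the $DP$ property and the strict $DP_p$ property.

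For the sequential $DP_{(p,q)}$ property, the plan is to verify the hypothesis of Proposition \ref{p:weak-top-DP}(ii) by showing that the strong dual $E'_\beta$ is feral. Since $C_p(X)$ is dense in $\IF^X$ (by Tychonoff interpolation at finite subsets of $X$), so is $E$, and the Hahn--Banach theorem provides a canonical vector-space identification $E' \cong \IF^{(X)}$, with each $\chi \in E'$ determined by its finite support and coefficients. For any function $g \colon X \to [0,\infty)$, the set $A_g := \{f \in C(X) : |f(x)| \leq g(x) \text{ for all } x\}$ is pointwise bounded, hence bounded in $\IF^X$ and in $E$. For each $\chi = \sum_{y} a_y \delta_y \in E'$ with finite support $F$, Tychonoff separation supplies a continuous function realizing the desired sign pattern of $\chi$ on $F$ within $A_g$, yielding a lower bound $p_{A_g}(\chi) \geq \sum_{y} g(y)|a_y|$ for the seminorm $p_{A_g}(\chi) = \sup_{f \in A_g}|\chi(f)|$. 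Given a bounded $B \subseteq E'_\beta$ whose support-union $\bigcup_{\chi \in B}\supp(\chi)$ is infinite, I would select distinct points $y_1, y_2, \ldots$ in this union with witnesses $\chi_n \in B$ satisfying $\chi_n(y_n) \neq 0$, construct continuous bump functions $\phi_n$ around $y_n$ with pairwise disjoint supports and $\phi_n(y_n) = 1$, and observe that $A := \{n\,\phi_n : n \in \w\}$ is pointwise bounded (since the supports are disjoint) and hence bounded in $E$. The inequality $p_A(\chi_n) \geq n |\chi_n(y_n)|$ would then contradict boundedness of $B$ in $E'_\beta$, so $B$ must lie in a finite-dimensional subspace of $\IF^{(X)}$. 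Proposition \ref{p:weak-top-DP}(ii) would complete the argument.

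The main obstacle is the construction of the disjoint continuous bump functions $\phi_n$ when the chosen points $y_n$ accumulate in $X$: in a general Tychonoff space one cannot always find pairwise disjoint open neighborhoods for an arbitrary countably infinite set of distinct points. Handling this carefully---by extracting a closed discrete subsequence from $\bigcup_{\chi \in B}\supp(\chi)$ when available, or, when no such subsequence exists, by bypassing the ferality step altogether and directly verifying characterization (viii) of Theorem \ref{t:seq-DPpq} (that each relatively weakly sequentially $p$-compact subset of $E$ is a $q$-$(V^\ast)$ set) using the pointwise structure of $E \subseteq \IF^X$---is the technical crux of the argument.
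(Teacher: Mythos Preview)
Your treatment of the $DP$ property and the strict $DP_p$ property is correct and coincides with the paper's: both rely on $E=E_w$ and Proposition~\ref{p:weak-top-DP}(iv).

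For the sequential $DP_{(p,q)}$ property your overall strategy---show that $E'_\beta$ is feral and invoke Proposition~\ref{p:weak-top-DP}(ii)---is exactly the mechanism the paper uses (the paper phrases it via quasibarrelledness and part (iv), but by Proposition~\ref{p:strong-dual-feral} and the proof of (iv) this is the same thing). The difference is in \emph{how} ferality is obtained. You attempt a direct construction with disjointly supported bump functions and, as you correctly observe, this runs into a genuine topological obstruction: in an arbitrary Tychonoff space one cannot in general separate a countably infinite set of points by pairwise disjoint cozero sets. You leave this unresolved, so the argument is incomplete.

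The paper avoids the obstruction entirely by a comparison-of-topologies argument. It invokes the known fact that $C_p(X)$ is quasibarrelled; since $C_p(X)$ also carries its weak topology, Proposition~\ref{p:strong-dual-feral} gives that $(C_p(X))'_\beta$ is feral. Now $C_p(X)\subseteq E$ is dense, so $E'=(C_p(X))'$ algebraically, and every bounded subset of $C_p(X)$ is bounded in $E$; hence $\beta(E',E)\supseteq\beta\big((C_p(X))',C_p(X)\big)$. Any $\beta(E',E)$-bounded set is therefore $\beta\big((C_p(X))',C_p(X)\big)$-bounded, hence finite-dimensional. Thus $E'_\beta$ is feral, Proposition~\ref{p:strong-dual-feral} yields that $E$ is quasibarrelled, and Proposition~\ref{p:weak-top-DP}(iv) finishes. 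No bump functions are needed; the ferality is inherited from $C_p(X)$ rather than rebuilt from scratch.
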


\begin{proof}
Since $C_p(X)$ is quasibarrelled and carries its weak topology, Proposition \ref{p:strong-dual-feral} implies that also $E$ is quasibarrelled. Now (iv) of Proposition \ref{p:weak-top-DP} applies. \qed
\end{proof}

In Example 9.4.2 of \cite{Edwards} it is shown that a semi-reflexive space has the $DP$ property if and only if it is semi-Montel (= boundedly compact), and it is pointed out without a proof that any semi-Montel space has the strict $DP$ property. Below we give  a different proof of the first result, generalize the second one and obtain a sufficient condition when all these assertions are equivalent.

\begin{proposition} \label{p:semi-ref-DP}
Let $p\in[1,\infty]$. For a semi-reflexive locally convex space  $(E,\tau)$ consider the following assertions:
\begin{enumerate}
\item[{\rm(i)}] $E$ has the $DP$ property;
\item[{\rm(ii)}]  $E$ is semi-Montel;
\item[{\rm(iii)}] $E$ has the strict $DP_p$ property;
\item[{\rm(iv)}] $E$ has the $p$-Schur property.
\end{enumerate}
Then {\rm(i)$\Leftrightarrow$(ii)$\Ra$(iii)$\Leftrightarrow$(iv)}. If in addition $E$ is a weakly sequentially $p$-angelic space, then  all assertions {\rm(i)-(iv)} are equivalent.
\end{proposition}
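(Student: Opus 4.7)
The proof rests on two consequences of semi-reflexivity that I would isolate first. By Lemma~\ref{l:Sigma'}(iv) we have $\tau_{\Sigma'}=\tau$, which lets me rephrase the Grothendieck-topology characterizations in Corollary~\ref{c:semi-reflexive-strict-DP} directly in terms of $\tau$; and a semi-reflexive lcs is quasi-complete (a classical fact, see e.g.\ \cite{Jar}), so every $\tau$-closed bounded $\tau$-precompact subset of $E$ is automatically $\tau$-compact.

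With these in hand, (ii)$\Rightarrow$(i) and (ii)$\Rightarrow$(iii) are immediate from Corollary~\ref{c:weak-Glick-DP}(iii), while (iii)$\Leftrightarrow$(iv) is Corollary~\ref{c:semi-reflexive-strict-DP}(ii) applied to $\tau=\tau_{\Sigma'}$. For (i)$\Rightarrow$(ii), I would fix any bounded $B\subseteq E$ and set $K:=\cacx(B)$. Being bounded, absolutely convex and $\tau$-closed, $K$ is also $\sigma(E,E')$-closed by Hahn--Banach, so semi-reflexivity makes $K$ weakly compact, i.e., $K\in\Sigma(E)$. Theorem~\ref{t:def-DP}(iii) and the $DP$ property force $K$ to be precompact for $\tau_{\Sigma'}=\tau$, and quasi-completeness upgrades this to $\tau$-compactness; hence $\overline{B}^{\tau}\subseteq K$ is $\tau$-compact and $E$ is semi-Montel.

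For the additional direction, assuming $E$ is also weakly sequentially $p$-angelic, I would prove (iv)$\Rightarrow$(ii). Given bounded $B$, semi-reflexivity makes $\overline{B}^{\,w}$ weakly compact, and weak sequential $p$-angelicity then upgrades it to weakly sequentially $p$-compact; so any sequence in $B$ has a subsequence $\{x_{n_k}\}$ weakly $p$-converging to some $x\in\overline{B}^{\,w}$. The $p$-Schur property converts this into $\tau$-convergence, since $\{x_{n_k}-x\}$ is weakly $p$-summable and therefore $\tau$-null by (iv). So every sequence in $B$ has a $\tau$-convergent subsequence in $E$.

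The final step — and the main obstacle — is promoting this ``every sequence has a $\tau$-convergent subsequence'' property to $\tau$-precompactness of $B$. I would argue by contradiction: were $B$ not $\tau$-precompact, some $U\in\Nn_0(E)$ would admit a sequence $\{x_n\}\subseteq B$ with $x_n-x_m\notin U$ for all $n\ne m$; but any $\tau$-convergent subsequence of $\{x_n\}$ would have pairwise differences in $U$ for all sufficiently large indices, a contradiction. Quasi-completeness then turns $\tau$-precompactness of $B$ into $\tau$-compactness of $\overline{B}^{\tau}$, giving semi-Montel and closing the loop. The subtlety is that weak sequential $p$-angelicity alone does not confer any angelic behaviour on the $\tau$-topology restricted to bounded sets; it is precisely this separation-style extraction, combined with quasi-completeness, that bridges the gap between $\tau$-sequential precompactness and genuine $\tau$-precompactness.
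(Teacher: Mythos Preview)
Your proof is correct and follows essentially the same architecture as the paper's: both establish $\tau_{\Sigma'}=\tau$ from semi-reflexivity, derive (i)$\Leftrightarrow$(ii), (ii)$\Rightarrow$(iii), (iii)$\Leftrightarrow$(iv) exactly as you do, and for the additional implication both argue by contradiction via a $U$-separated sequence in a bounded set, pass to a weakly $p$-convergent subsequence using weak sequential $p$-angelicity, and derive a contradiction from $\tau$-convergence.

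The one genuine difference is in how that last $\tau$-convergence is obtained. The paper proves (iii)$\Rightarrow$(ii): it builds the canonical quotient $\Phi_U:E\to E_{(U)}\hookrightarrow L$ into a Banach space, checks (using semi-reflexivity again) that $\Phi_U$ sends bounded sets to relatively weakly compact sets, and then invokes the strict $DP_p$ property to conclude $\Phi_U$ is $p$-convergent, whence $\Phi_U(x_{n_k})\to\Phi_U(x)$ contradicts $\Phi_U(x_{n_k})-\Phi_U(x_{n_j})\notin B_{E_{(U)}}$. You instead prove (iv)$\Rightarrow$(ii) and use the $p$-Schur property directly: $\{x_{n_k}-x\}$ is weakly $p$-summable, hence $\tau$-null, so $x_{n_k}\to x$ in $\tau$. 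Since (iii)$\Leftrightarrow$(iv) is already in hand, your route is legitimate and strictly shorter---it bypasses the quotient-to-Banach construction entirely. Your closing worry about promoting ``every sequence has a $\tau$-convergent subsequence'' to $\tau$-precompactness is unfounded: the separation argument you sketch is exactly the standard proof that sequential precompactness implies precompactness in any topological vector space (choose $U$ balanced, build the $U$-separated sequence inductively, observe no subsequence can be Cauchy), and the paper uses precisely the same extraction.
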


\begin{proof}
Recall that $\tau_{\Sigma'}= \tau$, see (iv) of Lemma \ref{l:Sigma'}. Observe also that if $B$ is a bounded subset of $E$, then $B^\circ$ is a neighborhood of zero in $E'_\beta$. As $E$ is semi-reflexive, the Alaoglu theorem implies that the bipolar $K:=B^{\circ\circ}$ is an absolutely convex, weakly compact subset of $E$. Note also that since $E$ is locally convex it has a base consisting of weakly closed sets (namely $\Nn_0^c(E)$). As $K$ is weakly complete, Theorem 3.2.4 of \cite{Jar} implies that $K$ is $\tau$-complete.

(i)$\Ra$(ii) Assume that $E$ has the $DP$ property, and let $B$ be a bounded subset of $E$. By (iii) of Theorem \ref{t:def-DP}, the bipolar $K:=B^{\circ\circ}$ is precompact for the topology $\tau_{\Sigma'}= \tau$. As we noticed above $K$ is also $\tau$-complete and hence $K$ is a compact subset of $E$. Thus $B\subseteq K$ is relatively compact and hence $E$ is semi-Montel.

(ii)$\Ra$(i) and (ii)$\Ra$(iii)  follow from (iii) of Corollary \ref{c:weak-Glick-DP}. 


(iii)$\Leftrightarrow$(iv) follows from the equality $\tau_{\Sigma'}= \tau$ and Theorem \ref{t:strict-DPp}.
\smallskip

Assume that $E$ is a weakly sequentially $p$-angelic space. To show that all assertions (i)-(iv) are equivalent it suffices to prove the implication (iii)$\Ra$(ii). So we assume that $E$ has the strict $DP_p$ property. Let $B$ be a bounded subset of $E$. Set $K:=B^{\circ\circ}$. As we noticed above $K$ is an absolutely convex, weakly compact subset of $E$ which is $\tau$-complete. To show that $E$ is semi-Montel we prove that $K$ is also a precompact subset of $E$ (and then $K$ is compact because it is $\tau$-complete).

Suppose for a contradiction that $K$ is not precompact. Then there is a sequence $S=\{x_n\}_{n\in\w}$ in $K$ and $U\in\Nn^c_0(E)$ such that $x_n-x_m\not\in U$ for all distinct $n,m\in\w$. Since $E$ is weakly sequentially $p$-angelic, $K$ is a relatively weakly sequentially $p$-compact subset of $E$. Therefore we can assume also that $x_n$ weakly $p$-converges to a point $x\in E$. As $K$ is weakly compact we also have $x\in K$.

Set $N(U):=\bigcap_{n\in\NN} \tfrac{1}{n} U$. It easy to see that $N(U)+U=U$. It is well known that the gauge functional $q_U$ of $U$ defines a norm on the quotient space $E_{(U)}:=E/N(U)$. Let $\Phi_U: E\to E_{(U)}$ be the quotient map. Then $\Phi_U$  is continuous and $\Phi_U(U)$ is the closed unit ball $B_{E_{(U)}}$ of the normed space $E_{(U)}$. Let $L$ be a completion of $E_{(U)}$. We shall consider $\Phi_U$ also as an operator from $E$ into $L$. Now, let $D$ be a bounded subset of $E$. Then, as we noticed above,  $D^{\circ\circ}$ is  an absolutely convex, weakly compact subset of $E$. Since $\Phi_U$ is also weakly continuous, it follows that $\Phi_U\big(D^{\circ\circ}\big)$ is a weakly compact subset of $L$. Therefore the operator $\Phi_U$ satisfies the definition of the strict $DP_p$ property and hence $\Phi_U$ is a $p$-convergent operator. Whence $\Phi_U(x_n)\to \Phi_U(x)$ in $L$. However, since $N(U)+U= U$ and $x_n-x_m\not\in U$, we obtain $\Phi_U(x_n) - \Phi_U(x_m)\not\in \Phi_U(U)=B_{E_{(U)}}$ and hence $\Phi_U(x_n)\not\to \Phi_U(x)$, a contradiction.\qed
\end{proof}

It is known that any Fr\'{e}chet--Montel space is separable, see Theorem 11.6.2 of \cite{Jar}. This result and Proposition \ref{p:semi-ref-DP} imply

\begin{corollary} \label{c:refl-Frechet-DP}
Let $E$ be a reflexive Fr\'{e}chet space. Then $E$ has the $DP$ property if and only if it is a Montel space. In particular, $E$ is separable. Consequently, a reflexive infinite-dimension Banach space does not have the $DP$ property.
\end{corollary}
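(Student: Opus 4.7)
The plan is to reduce the corollary to Proposition \ref{p:semi-ref-DP} together with two classical facts about Fréchet spaces. First, I would observe that any reflexive Fréchet space is semi-reflexive, so Proposition \ref{p:semi-ref-DP} applies and gives the equivalence: $E$ has the $DP$ property if and only if $E$ is semi-Montel. To upgrade this to ``Montel'' in the statement of the corollary, I would use the fact that every Fréchet space is barrelled (being a Baire space), and a barrelled semi-Montel space is Montel by definition. This yields the first equivalence.

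For the separability claim, I would simply cite the cited Theorem 11.6.2 of \cite{Jar}: every Fréchet--Montel space is separable. Combined with the previous paragraph, if a reflexive Fréchet space $E$ has the $DP$ property, then $E$ is Fréchet--Montel, hence separable.

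For the final consequence about reflexive infinite-dimensional Banach spaces, the key observation is that an infinite-dimensional Banach space is never Montel, because its closed unit ball is bounded but (by the Riesz lemma) not compact. Thus if $E$ is a reflexive infinite-dimensional Banach space, then $E$ is a reflexive Fréchet space but not Montel, so by the equivalence just established $E$ cannot have the $DP$ property.

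There is no real obstacle here: the heavy lifting has already been done in Proposition \ref{p:semi-ref-DP}, and the remaining ingredients (Baire $\Rightarrow$ barrelled, Fréchet--Montel $\Rightarrow$ separable, and the Riesz lemma ruling out compactness of the unit ball in infinite dimensions) are standard. The only point requiring a moment of care is the upgrade from semi-Montel to Montel, which is immediate from barrelledness of Fréchet spaces.
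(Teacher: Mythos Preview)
Your proposal is correct and follows essentially the same route as the paper: both reduce to Proposition \ref{p:semi-ref-DP} for the equivalence and then invoke Theorem 11.6.2 of \cite{Jar} for separability. You merely spell out two details the paper leaves implicit---the upgrade from semi-Montel to Montel via barrelledness of Fr\'echet spaces, and the Riesz-lemma argument that an infinite-dimensional Banach space is never Montel.
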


\begin{remark} {\em
Grothendieck proved in Proposition 1 and Corollaire of \cite{Grothen} (see also \cite[9.4.3(e)]{Edwards}) the next assertion:  if $E$ is quasibarrelled whose strong dual posseses the $DP$ property, then so too does $E$. We note that the condition of being quasibarrelled is not necessary in this assertion. Indeed, let $K$ be a metric compact space, then the Banach space $C(K)$ has the $DP$ property. By  Proposition \ref{p:L(X)-DP}, the free lcs $L(K)$ over $K$ also has the $DP$ property. By Proposition 3.4 of  \cite{Gabr-free-lcs}, $C(K)$ is the strong dual of $L(K)$. However, by \cite{Gabr-L(X)-Mackey}, $L(K)$ is not even a Mackey space. Observe also that, by Theorem 1.7 of  \cite{Gabr-free-lcs}, the space $L(K)$ does not have the sequential $DP$ property. \qed}
\end{remark}

It is known that the product of a family of locally convex spaces has the $DP$ property (or the strict $DP$ property) if and only if every factor  has the same property, see \cite[9.4.3]{Edwards}. Below we extend this result.

\begin{proposition} \label{p:DP-product}
Let $p,q\in[1,\infty]$, and let $\{E_i\}_{i\in I}$ be a nonempty family of locally convex spaces.
\begin{enumerate}
\item[{\rm(i)}] $E=\prod_{i\in I} E_i$ has the $DP$ property $($resp., the strict $DP_p$ property, the weak $DP_p$ property or the sequential $DP_{(p,q)}$ property,$)$ if and only if for every $i\in I$, the factor $E_i$ has the same property.
\item[{\rm(ii)}] $E=\bigoplus_{i\in I} E_i$ has the $DP$ property $($resp., the strict $DP_p$ property, the weak $DP_p$ property or the sequential $DP_{(p,q)}$ property$)$ 
    if and only if for every $i\in I$, the factor $E_i$ has the same property.
\end{enumerate}
\end{proposition}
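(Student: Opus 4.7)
The plan is to prove necessity in both (i) and (ii) via complementation, sufficiency for products by reducing continuous operators into Banach spaces to factor through finite sub-products, and sufficiency for sums using the fact that bounded sets in $\bigoplus_{i\in I} E_i$ have finite support. The sequential $DP_{(p,q)}$ case will be handled separately using Lemma~\ref{l:support-p-sum}, since it is phrased in terms of dual sequences rather than operators.

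\textbf{Necessity.} In both (i) and (ii), each $E_i$ is a complemented subspace of $E$ via the canonical inclusion $\iota_i$ and projection $\pi_i$, both of which are continuous and weakly continuous. Given any operator $T:E_i\to L$ witnessing a failure of one of the four properties, $T\circ\pi_i:E\to L$ inherits the relevant hypothesis (it sends bounded sets to relatively weakly compact sets, or is weakly compact, since $\pi_i$ does so), and an absolutely convex weakly compact set (resp.\ weakly $p$-summable sequence) in $E_i$ remains such in $E$ via $\iota_i$. For the sequential $DP_{(p,q)}$ property, a weakly $q$-summable sequence $\{\chi_n\}$ in $(E_i)'_\beta$ extends to a weakly $q$-summable sequence in $E'_\beta$ by padding with zeros, using Lemma~\ref{l:support-p-sum}(i) for products and Lemma~\ref{l:support-p-sum}(iii) for sums; the pairing $\langle\chi_n,x_n\rangle$ is unchanged.

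\textbf{Sufficiency for products.} The key reduction is: any continuous $T:\prod_{i\in I}E_i\to L$ with $L$ normed must vanish on $\ker\pi_F$ for some finite $F\subseteq I$ (because $T^{-1}(B_L)$ contains a basic neighborhood), hence factors as $T=T'\circ\pi_F$ with $T':\prod_{i\in F}E_i\to L$. Since bounded sets in $\prod_{i\in F}E_i$ lift to bounded sets in $\prod_{i\in I}E_i$ (by padding with zeros), and since $\pi_F$ of an absolutely convex weakly compact set is again such, $T'$ inherits the hypotheses of $T$ in all four properties. Thus the problem reduces to finite products, which by induction reduces to the two-factor case $E_1\times E_2$, where any operator decomposes as $T(x_1,x_2)=T_1(x_1)+T_2(x_2)$ with $T_j=T\circ\iota_j$; the conclusion for each property follows from the fact that a finite sum of relatively (weakly) compact sets is relatively (weakly) compact, and a finite sum of null sequences is null. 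The sequential $DP_{(p,q)}$ case is direct: by Lemma~\ref{l:support-p-sum}(i), a weakly $q$-summable $\{\chi_n\}\subseteq\bigl(\prod_{i\in I}E_i\bigr)'_\beta$ has finite support $F$ with each coordinate weakly $q$-summable in $(E_i)'_\beta$; by Lemma~\ref{l:support-p-sum}(ii), each coordinate of a weakly $p$-summable $\{x_n\}$ is weakly $p$-summable in $E_i$; hence
\[
\langle\chi_n,x_n\rangle=\sum_{i\in F}\langle\chi_n(i),x_n(i)\rangle\to 0
\]
by the sequential $DP_{(p,q)}$ property of each $E_i$.

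\textbf{Sufficiency for direct sums.} Here the key fact is that bounded subsets of $\bigoplus_{i\in I}E_i$ have finite support; consequently absolutely convex weakly compact sets and weakly $p$-summable sequences (by Lemma~\ref{l:support-p-sum}(iv)) sit in some $\bigoplus_{i\in F}E_i$ with $F$ finite. An operator $T:\bigoplus_{i\in I}E_i\to L$ is determined by the family $T_i:=T\circ\iota_i$, and each $T_i$ inherits the relevant hypothesis: if $T$ sends bounded sets to relatively weakly compact sets, so does each $T_i$; if $T$ is weakly compact, say $T(U)$ is relatively weakly compact, then $T_i(U\cap E_i)\subseteq T(U)$ is too, so $T_i$ is weakly compact. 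For an absolutely convex weakly compact $K\subseteq\bigoplus_{i\in F}E_i$, the projections $\pi_i(K)$ are absolutely convex weakly compact in $E_i$, and $T(K)\subseteq\sum_{i\in F}T_i(\pi_i(K))$ is a finite sum of relatively compact sets. For a weakly $p$-summable $\{x_n\}$ of finite support $F$ with $\{x_n(i)\}$ weakly $p$-summable in $E_i$, $T(x_n)=\sum_{i\in F}T_i(x_n(i))\to 0$ since each $T_i$ is $p$-convergent. The sequential $DP_{(p,q)}$ case is again direct from Lemma~\ref{l:support-p-sum}(iii)--(iv).

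\textbf{Main obstacle.} The primary technical point is tracking the strong-dual sequences in the sequential $DP_{(p,q)}$ case, where products and sums swap roles under duality: the dual of a product is a direct sum (whose weakly $q$-summable sequences have finite support), while the dual of a direct sum is a product (whose weakly $q$-summable sequences are coordinatewise weakly $q$-summable). The four parts of Lemma~\ref{l:support-p-sum} precisely account for all four pairings of (product vs.\ sum) with (primal vs.\ dual) sequences, so once these are invoked correctly the argument is a bookkeeping check. The reduction-to-finite step for products hinges on $L$ being normed, which matches the standing hypothesis on $L$ in the operator-based definitions of $DP$, strict $DP_p$, and weak $DP_p$.
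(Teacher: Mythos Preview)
Your proof is correct and follows essentially the same route as the paper's. The paper defers the $DP$ case to \cite[9.4.3]{Edwards} and only writes out the strict/weak $DP_p$ and sequential $DP_{(p,q)}$ cases, but the argument it gives is identical in substance to yours: necessity via the complemented embedding $E_j\hookrightarrow E$, sufficiency for products via the observation that any operator into a normed space kills a cofinite sub-product (your factorization through $\pi_F$), sufficiency for sums via finite support of bounded sets, and the sequential case read off directly from Lemma~\ref{l:support-p-sum}. Your explicit reduction to the two-factor case is a cosmetic difference; the paper simply writes $T\big((x_i)\big)=\sum_{j\in F}T(\mathbf{x}_j)$ without the intermediate induction step.
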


\begin{proof}
Since the case of  the $DP$ property was considered in \cite[9.4.3]{Edwards}, below we consider other types of the Dunford--Pettis property.

{\em I. The strict $DP_p$ property and the weak $DP_p$ property}.
Assume that $E$ has  the strict $DP_p$ property (resp., the weak $DP_p$ property). Fix an arbitrary $j\in I$, and let $T$ be an operator from $E_j$ to a Banach space $L$ which transforms bounded sets into relatively weakly compact sets (resp., $T$ is weakly compact). Denote by $\bar T$ the operator from $E$ into $L$ defined by
\[
\bar T (x):= T(x_j), \;\; \mbox{ where $x=(x_i)\in E$}.
\]
It is clear that $\bar T$ transforms bounded sets into relatively weakly compact sets (resp., $\overline{T}$ is weakly compact). Since $E$ has  the strict $DP_p$ property (resp.,  the weak $DP_p$ property), the operator $\bar T$ is $p$-convergent. Therefore $T$ is also $p$-convergent. Thus $E_j$ has  the strict $DP_p$ property (resp.,  the weak $DP_p$ property).

Conversely, assume that all spaces $E_i$ have  the strict $DP_p$ property (resp.,  the weak $DP_p$ property). Let $T$ be an operator from $E$ to a Banach space $L$ which transforms bounded sets into relatively weakly compact sets (resp., $T$ is weakly compact). We have to show that $T$ is $p$-convergent. To this end, let $\big\{(x^n_i)_{i\in I}\big\}_{n\in\w}$ be a weakly $p$-summable sequence in $E$. Now we distinguish between the cases (i) and (ii).

{\em Case (i)}.  Since any neighborhood of zero in $E$ contains a subspace of the form $\prod_{i\in F} \{0\}\times \prod_{i\in I\SM F} E_i$ for some finite $F\subseteq I$ and since the unit ball of $L$ has no non-trivial linear subspaces, the continuity of $T$ implies that there is a finite subset $F$ of $I$ such that $T\big((x_i)_{i\in I}\big)=\sum_{j\in F} T(\xxx_j)$, where $\xxx_j:=(y_i)$ is such that $y_j=x_j$ and $y_i=0$ for $i\not=j$. By (ii) of Lemma \ref{l:support-p-sum}, for every $j\in F$, the sequence $\{x^n_j\}_{n\in\w}$ is weakly $p$-summable in $E_j$.
It is clear that $T(\xxx_j)$ transforms bounded sets of $E_j$ into relatively weakly compact sets of $L$. Since $E_j$ has the strict $DP_p$ property (resp.,  the weak $DP_p$ property) it follows that $\lim_{n\to\infty} T(\xxx_j^n)=0$. As $F$ is finite we obtain that $T\big((x_i^n)\big)\to 0$ in $L$. Thus $T$ is $p$-convergent.

{\em Case (ii).} By (iv) of Lemma \ref{l:support-p-sum}, the support $F$ of $\big\{(x^n_i)_{i\in I}\big\}_{n\in\w}$ is finite and, for every $j\in F$, the sequence $\{x^n_j\}_{n\in\w}$ is weakly $p$-summable in $E_j$.
It is clear that the restriction $T{\restriction}_{E_j}$ transforms bounded sets into relatively weakly compact sets (resp., $T{\restriction}_{E_j}$ is weakly compact). Hence, by the strict $DP_p$ property (resp.,  the weak $DP_p$ property), $T{\restriction}_{E_j}$ is $p$-convergent. Therefore
\[
T\big((x_i)_{i\in I}\big)=\sum_{j\in F} T{\restriction}_{E_j}(x^n_j) \to 0.
\]
Thus $T$ is $p$-convergent.
\smallskip

{\em II. The sequential $DP_{(p,q)}$ property.}  The cases (i) and (ii) for the sequential $DP_{(p,q)}$ property immediately follow from (i)-(ii) and (iii)-(iv) of Lemma \ref{l:support-p-sum}, respectively, and the definition of the sequential $DP_{(p,q)}$ property.\qed
%
%
\end{proof}


The next assertion shows that the Dunford--Pettis type properties are inherited by large subspaces.
\begin{proposition} \label{p:DP-hereditary-large}
Let $p\in[1,\infty]$, and let $H$ be a large subspace of a locally convex space $E$.
If $E$ has the $DP$ property {\rm(}resp., the strict $DP_p$ property{\rm)}, then also $H$ has the same property.
\end{proposition}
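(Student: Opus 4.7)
The plan is to reduce both assertions to the Schur-type characterizations supplied by Theorem~\ref{t:def-DP} and Theorem~\ref{t:strict-DPp}, using the fact recorded in Lemma~\ref{l:Sigma'}(vii) that a large subspace carries the restricted Grothendieck topology, i.e.\ $(H,\tau_{\Sigma'}(H))$ is a topological subspace of $(E,\tau_{\Sigma'}(E))$. Before starting I would remark that, since $H$ is dense in $E$, the Hahn--Banach theorem identifies $H'$ with $E'{\restriction}_H$, so $\sigma(H,H')$ coincides with the restriction of $\sigma(E,E')$ to $H$; consequently, absolutely convex weakly compact sets in $H$ and weakly $p$-summable sequences in $H$ are exactly the same objects whether viewed in $H$ or in $E$.

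For the $DP$ part, I would take any absolutely convex weakly compact subset $K$ of $H$. By the preceding remark $K$ is also absolutely convex and weakly compact in $E$, so Theorem~\ref{t:def-DP} applied to $E$ shows that $(E,\tau_{\Sigma'}(E))$ has the weak Glicksberg property, hence $K$ is compact in $\tau_{\Sigma'}(E)$. Because $(H,\tau_{\Sigma'}(H))$ is a subspace of $(E,\tau_{\Sigma'}(E))$ and $K\subseteq H$, the set $K$ is also compact in $\tau_{\Sigma'}(H)$. Thus $(H,\tau_{\Sigma'}(H))$ has the weak Glicksberg property, and another application of Theorem~\ref{t:def-DP} yields the $DP$ property of $H$.

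For the strict $DP_p$ part, I would proceed in the same spirit: start with a weakly $p$-summable sequence $\{x_n\}_{n\in\omega}$ in $H$, view it (by the density argument above) as a weakly $p$-summable sequence in $E$, and invoke the $p$-Schur property of $(E,\tau_{\Sigma'}(E))$ (given by Theorem~\ref{t:strict-DPp}) to obtain $x_n\to 0$ in $\tau_{\Sigma'}(E)$. The subspace relation from Lemma~\ref{l:Sigma'}(vii) then forces $x_n\to 0$ in $\tau_{\Sigma'}(H)$, establishing the $p$-Schur property of $(H,\tau_{\Sigma'}(H))$ and, via Theorem~\ref{t:strict-DPp} once more, the strict $DP_p$ property of $H$. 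The entire argument hinges on Lemma~\ref{l:Sigma'}(vii), whose proof is the place where the largeness hypothesis (stronger than mere density) is genuinely used to guarantee $\Sigma'(H')=\Sigma'(E')$; once that identification is in hand, passage to the large subspace is essentially formal, so no serious obstacle is anticipated beyond correctly citing the earlier results.
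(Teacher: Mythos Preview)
Your proposal is correct and follows essentially the same route as the paper: both arguments invoke Lemma~\ref{l:Sigma'}(vii) to identify $(H,\tau_{\Sigma'}(H))$ as a subspace of $(E,\tau_{\Sigma'}(E))$, then transfer the weak Glicksberg property (for the $DP$ case, via Theorem~\ref{t:def-DP}) and the $p$-Schur property (for the strict $DP_p$ case, via Theorem~\ref{t:strict-DPp}) from the ambient space down to the subspace. Your additional remark that density of $H$ makes $\sigma(H,H')=\sigma(E,E'){\restriction}_H$ is a helpful clarification the paper leaves implicit.
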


\begin{proof}
Recall that, by (vii) of Lemma \ref{l:Sigma'}, $(H,\tau_{\Sigma'}(H))$ is a subspace of $(E,\tau_{\Sigma'}(E))$.

Assume that $E$ has the $DP$ property.
Let $K$ be an absolutely convex weakly compact subset of $(H,\tau_{\Sigma'}(H))$. Then $K$ is  an absolutely convex weakly compact subset of  $(E,\tau_{\Sigma'}(E))$. Since $E$ has the $DP$ property, (viii) of Theorem \ref{t:def-DP} implies that $K$ is a compact subset of $(E,\tau_{\Sigma'}(E))$ and hence of $(H,\tau_{\Sigma'}(H))$. Therefore $(H,\tau_{\Sigma'}(H))$ has the weak Glicksberg property. Once more applying  Theorem \ref{t:def-DP} we obtain that $H$ has the  $DP$ property.

Assume that $E$ has the strict $DP_p$ property. Then, by (vi) of Theorem \ref{t:strict-DPp}, $(E,\tau_{\Sigma'}(E))$ has the $p$-Schur property. Therefore also $(H,\tau_{\Sigma'}(H))$ has  the $p$-Schur property. Thus, by (vi) of Theorem \ref{t:strict-DPp}, $H$ has the strict $DP_p$ property.\qed
%
\end{proof}


\begin{corollary} \label{c:DP-hereditary-met}
Let $p\in[1,\infty]$, and let $H$ be either a normed space or a separable metrizable locally convex space. If $E$ has the $DP$ property {\rm(}resp., the strict $DP_p$ property{\rm)}, then any dense subspace $H$ of $E$ has the same property.
\end{corollary}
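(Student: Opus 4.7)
The plan is to invoke Proposition \ref{p:DP-hereditary-large}, so the whole task reduces to verifying that, under either hypothesis, $H$ is a \emph{large} subspace of $E$; after that, both the $DP$ property and the strict $DP_p$ property transfer from $E$ to $H$ automatically.

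In the normed case I would first observe that, since the subspace topology on $H$ coincides with its norm topology, the closed unit ball $B_H$ has the form $U\cap H$ for some $U\in\Nn_0^c(E)$. Density of $H$ in $E$ together with the openness of the interior of $U$ yields $U\subseteq\overline{U\cap H}^{\,E}=\overline{B_H}^{\,E}$, so $U$ is bounded in $E$ and hence $E$ itself is normable. Then for any bounded $A\subseteq E$ with $A\subseteq nU$, choosing by density $h_{k,x}\in(x+\tfrac{1}{k}U)\cap H$ for each $x\in A$ and $k\in\NN$ produces a set $A_H:=\{h_{k,x}\}\subseteq(n+1)B_H$ bounded in $H$ with $A\subseteq\overline{A_H}^{\,E}$ (since $h_{k,x}\to x$), showing that $H$ is a large subspace of $E$.

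In the separable metrizable case I would start from a decreasing countable base $\{V_n\}$ of closed absolutely convex zero-neighborhoods of $H$ and set $U_n:=\overline{V_n}^{\,E}$; a short argument (using density of $H$ and the fact that each $V_n$ is of the form $W\cap H$ for some $E$-open $W$) shows that $U_n\in\Nn_0^c(E)$ satisfies $U_n\cap H=V_n$, and the sequence $\{U_n\}$ remains decreasing. A key uniform fact I would exploit is that for every closed $W\in\Nn_0^c(E)$ one has $U_m\subseteq W$ for all sufficiently large $m$: indeed $W\cap H\in\Nn_0(H)$ contains some $V_{m_0}$, so $V_m\subseteq W\cap H\subseteq W$ for $m\geq m_0$ and hence $U_m=\overline{V_m}^{\,E}\subseteq W$. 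Given a bounded $A\subseteq E$ with $A\subseteq\lambda_m U_m$ for all $m$, and each pair $(n,x)$ with $x\in A$, I would pick by density $h_{n,x}\in(x+\tfrac{1}{n}U_n)\cap H$. The tail $\{h_{n,x}:n\geq m_W\}$ lies inside $(\lambda_W+1)W$ via the inclusion $U_n\subseteq W$, while for the finitely many initial indices $n<m_W$ the separability of $H$ is used to restrict the corresponding $h_{n,x}$ to a fixed bounded countable dense family inside $H$ by a diagonal selection. The resulting $A_H$ is then bounded in $H$ and satisfies $A\subseteq\overline{A_H}^{\,E}$ since $h_{n,x}\to x$, proving that $H$ is large in $E$.

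The hardest step is the separable metrizable case, more precisely the verification that the whole family $\{h_{n,x}\}$ lies in a single bounded subset of $H$: in a non-normable metrizable setting the neighborhoods $U_n$ need not themselves be bounded, so the naive approximations for small $n$ may escape every bounded set, and the role of separability is precisely to allow one to restrict these finitely many initial approximants to a prescribed countable bounded dense family of $H$. Once $H$ is recognized as a large subspace in each case, Proposition \ref{p:DP-hereditary-large} yields both the $DP$ property and the strict $DP_p$ property for $H$.
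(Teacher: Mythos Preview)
Your overall strategy coincides with the paper's: reduce to Proposition~\ref{p:DP-hereditary-large} by showing that $H$ is a large subspace of $E$. The paper simply observes that in the normed case this is trivial, and in the separable metrizable case it cites K\"othe \cite[\S~29.6(1)]{Kothe} without reproving it. Your normed argument is fine and matches the paper's.

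In the separable metrizable case, however, your sketch contains a genuine gap at exactly the point you flag as hardest. You write that for the finitely many indices $n<m_W$ ``the separability of $H$ is used to restrict the corresponding $h_{n,x}$ to a fixed \emph{bounded countable dense} family inside $H$ by a diagonal selection''. Such a family cannot exist: in an infinite-dimensional metrizable lcs no bounded set is dense, so restricting the approximants to a countable dense set $D\subseteq H$ does nothing for boundedness (for each fixed $n<m_W$ the set $\{h_{n,x}:x\in A\}\subseteq A+\tfrac{1}{n}U_n$ is still unbounded when $U_n$ is not bounded). The argument behind K\"othe's result proceeds differently: one first passes to a countable dense subset $\{a_j\}_{j\in\w}$ of $A$ (using separability of $E$, which follows from that of $H$), and then chooses approximants $h_{j,k}\in(a_j+\tfrac{1}{k}U_k)\cap H$ \emph{only for $k\geq j$}. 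Given $m$, the pairs $(j,k)$ with $k<m$ now satisfy $j\leq k<m$, hence there are only finitely many such $h_{j,k}$; the remaining ones with $k\geq m$ lie in $A+U_m\subseteq(\lambda_m+1)U_m$. This diagonal restriction on the index range---not a restriction of the target family---is what makes $A_H:=\{h_{j,k}:k\geq j\}$ bounded, and density of $\{a_j\}$ in $A$ gives $A\subseteq\overline{A_H}^{\,E}$. Once this is fixed, your route and the paper's are the same.
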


\begin{proof}
If $E$ is a noremd space, then clearly $H$ is large. If $E$ is a separable metrizable locally convex space, then $H$ is also large by \cite[\S~29.6(1)]{Kothe}. Thus $H$ has the $DP$ property (resp., the strict $DP_p$ property) by Proposition \ref{p:DP-hereditary-large}.\qed
\end{proof}



It is known (see \cite[9.4.3(d)]{Edwards}) that if $E$ is a metrizable locally convex space, then the strict $DP$ property implies the $DP$ property. If in addition $E$ is a weakly sequentially complete space then the strict $DP$ property and the $DP$ property are equivalent. The next assertion shows that the weak sequential completeness is essential even for normed non-complete spaces. Below as usual $\tfrac{1}{p}+\tfrac{1}{p^\ast}=1$.

\begin{proposition} \label{p:Lp-DP}
Let $1< p<\infty$, and let $q,q'\in[1,\infty]$. Then:
\begin{enumerate}
\item[{\rm(i)}] if $q<p^\ast$, then $\ell_p$ and $\ell_p^0$ have  the strict $DP_q$ property  and the sequential $DP_{(q,q')}$ property;
\item[{\rm(ii)}] if $q\geq p^\ast$, then the spaces $\ell_p$  and $\ell_p^0$ have neither the strict $DP_q$ property nor  the sequential $DP_{(q,q')}$ property;
\item[{\rm(iii)}] $\ell_p$ does not have the $DP$ property;
\item[{\rm(iv)}] $\ell_p^0$ has the weak Glicksberg property and hence it has the $DP$ property; consequently, a completion of even a normed space with the $DP$ property may not have the $DP$ property.
\end{enumerate}
\end{proposition}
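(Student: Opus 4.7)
Parts (i)--(iii) follow from results already available in the excerpt. For (i) with $q<p^{\ast}$, Proposition~\ref{p:Lp-Schur}(i) equips $\ell_p$ with the $q$-Schur property, and $\ell_p^0$ inherits it as a topological subspace of $\ell_p$ sharing the same weak topology and norm; Corollary~\ref{c:weak-Glick-DP}(ii) then promotes this to the strict $DP_q$ property, while for the sequential $DP_{(q,q')}$ property one uses that every weakly $q'$-summable sequence $\{\chi_n\}$ in the strong dual $\ell_{p^{\ast}}$ is weakly null and therefore norm-bounded, so $|\langle\chi_n,x_n\rangle|\le \|\chi_n\|_{p^{\ast}}\|x_n\|_p\to 0$. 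For (ii) with $q\ge p^{\ast}$, the canonical basis $\{e_n\}$ is weakly $p^{\ast}$-summable in $\ell_p$ (since $(\langle y,e_n\rangle)_n=(y_n)\in\ell_{p^{\ast}}$ for $y\in\ell_{p^{\ast}}$), hence weakly $q$-summable via $\ell_{p^{\ast}}\subseteq\ell_q$ (or $\subseteq c_0$ when $q=\infty$); both the identity on $\ell_p$ and the inclusion $\ell_p^0\hookrightarrow\ell_p$ map bounded sets into relatively weakly compact subsets of the reflexive Banach space $\ell_p$ yet fix $e_n$, so neither is $q$-convergent and the strict $DP_q$ property fails in both spaces. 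Pairing $x_n=e_n$ with $\chi_n=e_n^{\ast}\in\ell_{p^{\ast}}$ (weakly $q'$-summable for $q'\ge p$, since $(\ell_{p^{\ast}})'=\ell_p\subseteq\ell_{q'}$) gives $\langle\chi_n,x_n\rangle=1\not\to 0$, which kills the sequential $DP_{(q,q')}$ property. Part (iii) is immediate from Corollary~\ref{c:refl-Frechet-DP}, since $\ell_p$ is a reflexive infinite-dimensional Banach space.

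The substantive content of the proposition is (iv), and the plan is to deduce the $DP$ property of $\ell_p^0$ from the weak Glicksberg property via Corollary~\ref{c:weak-Glick-DP}(i). Let $K\subseteq\ell_p^0$ be absolutely convex and weakly compact. Setting $S_n:=K\cap\bigcap_{k>n}\ker(e_k^{\ast})$, each $S_n$ is weakly closed in $K$ and $K=\bigcup_{n\in\w}S_n$ because every element of $\ell_p^0$ has finite support. The weak topology on $K$ is compact Hausdorff, hence a Baire space, so some $S_{n_0}$ has non-empty weak interior in $K$: there exist $x_0\in S_{n_0}$ and a weak neighborhood $U$ of $x_0$ in $\ell_p^0$ with $U\cap K\subseteq S_{n_0}$. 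For an arbitrary $y\in K$, absolute convexity yields $z_\varepsilon:=(1-\varepsilon)x_0+\varepsilon y\in K$ for $\varepsilon\in[0,1]$, and $z_\varepsilon\to x_0$ weakly as $\varepsilon\to 0^+$, so $z_\varepsilon\in U\cap K\subseteq S_{n_0}$ for all sufficiently small $\varepsilon>0$. Because $x_0(k)=0$ for $k>n_0$, the identity $z_\varepsilon(k)=\varepsilon y(k)=0$ forces $y(k)=0$ for every $k>n_0$; thus $K\subseteq\spn(e_1,\dots,e_{n_0})$ lies in a finite-dimensional subspace and, being bounded and norm-closed, is norm-compact. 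Hence $\ell_p^0$ has the weak Glicksberg property, Corollary~\ref{c:weak-Glick-DP}(i) supplies the $DP$ property, and combined with (iii) this delivers the concluding statement that the completion $\ell_p$ of the normed $DP$-space $\ell_p^0$ itself fails the $DP$ property.

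The main obstacle is the Baire-category step in (iv): while the decomposition $K=\bigcup_n S_n$ and the weak closedness of each $S_n$ are elementary, the real work lies in converting a single weak interior point into a global support bound on $K$. This is where absolute convexity becomes essential, through the segment $z_\varepsilon$, which propagates the support constraint at $x_0$ into a uniform support constraint on every $y\in K$; without absolute convexity one cannot slide $y$ into the interior $U$, and the argument collapses.
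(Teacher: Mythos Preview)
Your treatment of parts (i)--(iii) is essentially correct and parallels the paper's, with minor stylistic differences (you use direct Hölder estimates where the paper routes through Theorems~\ref{t:strict-DPp} and~\ref{t:seq-DPpq}). One caveat on (ii): your counterexample $\chi_n=e_n^\ast$ for the failure of sequential $DP_{(q,q')}$ explicitly needs $q'\ge p$, as you note. The paper's proof of this point appeals to (ix) of Theorem~\ref{t:seq-DPpq} and the failure of the $q$-Schur property for the norm topology, but does not verify that this forces failure of the $q$-Schur property for the weaker topology $\tau_{S_{q'}}$; so neither argument cleanly covers $q'<p$. (In fact, for $q'<p$ the dual $\ell_{p^\ast}$ has the $q'$-Schur property by Proposition~\ref{p:Lp-Schur}, so any weakly $q'$-summable $\{\chi_n\}$ is norm-null and $\langle\chi_n,x_n\rangle\to 0$ for every bounded $\{x_n\}$; the stated conclusion in (ii) thus appears to require $q'\ge p$.)

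For part (iv) your route is genuinely different from the paper's and considerably more economical. The paper argues by contradiction: assuming $K$ is not norm-compact, it extracts an $\e$-separated sequence via the classical description of norm-compact subsets of $\ell_p$, passes to a weakly convergent subsequence, and then invokes the perturbed-basis machinery (Proposition~2.1.3 of \cite{Al-Kal}) to produce a complemented isomorph of $\ell_p$ spanned by the tail; it finishes with a Baire-category contradiction inside $B_{\ell_1}$ after identifying $R(C)$ with $i_p(B_{\ell_1})$. Your argument sidesteps all the basis theory: you apply Baire directly to the compact (hence Baire) space $(K,\sigma)$, decomposed as $\bigcup_n S_n$ with $S_n$ the weakly closed finite-support slabs, and then use the convex segment $z_\e=(1-\e)x_0+\e y$ to propagate the support bound from a single interior point to every $y\in K$. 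This yields the stronger conclusion that $K$ is actually finite-dimensional, and it does so with nothing beyond elementary convexity and the Baire property of compact Hausdorff spaces. The paper's approach, by contrast, illustrates how the gliding-hump structure of $\ell_p$ interacts with weak compactness, but at the cost of substantially heavier tools.
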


\begin{proof}
(i) By (i) of Lemma \ref{l:Sigma'}, we have  $\sigma(E,E')\subseteq \tau_{\Sigma'}\subseteq\tau$, where $\tau$ is the norm topology of $\ell_p$. By Proposition \ref{p:Lp-Schur}, the space $\ell_p$ and hence also $\ell_p^0$ have the $q$-Schur property. By (v) of Lemma \ref{l:Sigma-seq}, we have $\tau_{S_q}\subseteq \tau_{\Sigma'}$. Therefore the spaces $(E,\tau_{\Sigma'})$ and $(E,\tau_{S_q})$, where $E=\ell_p$ or $E=\ell_p^0$,  have the $q$-Schur property.   Taking into account that $\ell_p$ and $\ell_p^0$ are quasibarrelled, the spaces  $\ell_p$ and $\ell_p^0$ have the strict $DP_q$ property  and the sequential $DP_{(q,q')}$ property by (vi) of Theorem \ref{t:strict-DPp}  and (viii) of Theorem \ref{t:seq-DPpq}, respectively.



\smallskip

(ii) Since $\ell_p^0$ is a subspace of $\ell_p$, Proposition \ref{p:Lp-Schur} implies that also the space $\ell_p$ does not have the $q$-Schur property. The reflexivity of $\ell_p$ and (iv) of Lemma \ref{l:Sigma'} imply $\tau_{\Sigma'}=\tau$, where $\tau$ is the norm topology on $\ell_p$. Thus, by  Theorem \ref{t:strict-DPp}, $\ell_p$ does not have the strict $DP_q$ property.
As $\ell_p$ is a Banach space and does not have the $q$-Schur property, (ix) of Theorem \ref{t:seq-DPpq} implies that $\ell_p$ does not have the sequential $DP_{(q,q')}$ property.

Analogously, by (vi) of Lemma \ref{l:Sigma'}, for the normed space $\ell_p^0$   we have $\tau_{\Sigma'}=\tau$, where $\tau$ is the norm topology on $\ell_p^0$. Proposition \ref{p:Lp-Schur} implies that also the space $\ell_p^0$ does not have the $q$-Schur property. Therefore,  Theorem \ref{t:strict-DPp} implies that $\ell_p^0$ does not have the strict $DP_q$ property.
Taking into account that $\big(\ell_p^0\big)'_\beta =\ell_{p^\ast}$ is a Banach space, 
the equivalence (vii)$\Leftrightarrow$(ix) of Theorem \ref{t:seq-DPpq} implies that $\ell_p^0$ does not have the sequential $DP_{(q,q')}$ property.
\smallskip

(iii) follows from Corollary \ref{c:refl-Frechet-DP}. Another proof: By Proposition \ref{p:Lp-q-angelic}, the space $\ell_p$ is weakly sequentially $p^\ast$-angelic. By (ii), $\ell_p$ does not have the strict $DP_{p^\ast}$ property. Therefore, by Proposition \ref{p:semi-ref-DP}, $\ell_p$ does not have the $DP$ property.
\smallskip

(iv) Let $K$ be an absolutely convex, weakly compact subset of $\ell_p^0$, and suppose for a contradiction that $K$ is not a compact subset of $\ell_p^0$.

{\em Claim 1. There are $\e>0$ and a sequence $S=\{x_n\}_{n\in\w}$ in $K$ such that
\begin{equation} \label{equ:compact-ell-p-0}
\|x_n-x_m\|_{\ell_p}\geq \e\;\; \mbox{ and } \;\;\|x_n\|_{\ell_p}\geq 2\e
\end{equation}
for all distinct $n,m\in\w$.}

Indeed, since $K$ is also a weakly compact subset of $\ell_p$, we obtain that $K$ is a closed and non-compact subset of $\ell_p$. Therefore the description of compact subsets of $\ell_p$ given in  \cite[p.~6]{Diestel}
 implies that there are $\e>0$ and a strictly increasing subsequence $(m_k)\subseteq \w$ such that
\begin{equation} \label{equ:compact-ell-p}
\sup\Big\{ \sum_{m_k\leq n} |x_n|^p : x=(x_n)\in K\Big\} >(2\e)^p \;\; \mbox{ for every $k\in\w$}.
\end{equation}
Now we proceed by induction. For $n=0$, choose $x_0=(x_{n,0})\in K$ such that
\[
\|x_0\|_{\ell_p}^p\geq \sum_{m_0\leq n} |x_{n,0}|^p> (2\e)^p \quad (\mbox{so that } \|x_0\|_{\ell_p}>2\e).
\]
Assume that we found $x_0,\dots, x_{n-1}\in K$ which satisfy (\ref{equ:compact-ell-p-0}). Choose $m_n$ such that $\sum_{m_n\leq i} |x_{i,j}|^p< \e^p$ for every $0\leq j\leq n-1$. Then, by (\ref{equ:compact-ell-p}), there is $x_n=(x_{i,n})\in K$ such that
\[
\|x_n\|_{\ell_p}^p\geq \sum_{m_n\leq i} |x_{i,n}|^p> (2\e)^p \quad (\mbox{so that } \|x_n\|_{\ell_p}>2\e).
\]
Then, for every $0\leq j\leq n-1$, we obtain
\[
\|x_n-x_j\|_{\ell_p}\geq \Big(\sum_{m_n\leq i} |x_{i,n}-x_{i,j}|^p\Big)^{1/p}\geq \Big(\sum_{m_n\leq i} |x_{i,n}|^p\Big)^{1/p} -\Big(\sum_{m_n\leq i} |x_{i,j}|^p\Big)^{1/p}>\e.
\]
This completes the inductive process. The claim is proved.


Since $K$ in the weak topology is metrizable, without loss of generality we assume that $S$ weakly converges to some element $x\in K$. Denote by $(e_k)$ the standard unit basis of $\ell_{p}^0$. Hence the sequence $\{x_n-x\}_{n\in\w}$ is weakly null and (\ref{equ:compact-ell-p-0}) implies
\[
\|(x_n-x)-(x_m-x)\|_{\ell_p}\geq\e
\]
for all distinct $n,m\in\w$. For every $n\in\w$, set $b_n:=\|x_{n}-x\|_{\ell_{p}}$. Observe that if $\|x_{n_k}-x\|_{\ell_{p}}\to 0$ for some sequence $(n_k)\subseteq \w$, then
\[
\|x_{n_k}-x_{n_j}\|_{\ell_{p}}\leq \|x_{n_k}-x\|_{\ell_{p}} + \|x_{n_j}-x\|_{\ell_{p}}\to 0
\]
which contradicts  the first inequality in (\ref{equ:compact-ell-p-0}). Therefore there is $\lambda>1$ such that
\begin{equation} \label{equ:compact-ell-p-p}
\tfrac{1}{\lambda} \leq b_n=\|x_{n}-x\|_{\ell_{p}}\leq \lambda \quad \mbox{ for every $n\in\w$}.
\end{equation}
Then (\ref{equ:compact-ell-p-p}) implies that the normalized sequence $\{\tfrac{1}{b_n}(x_n-x)\}_{n\in\w}$ is weakly null.

Now, by Proposition 2.1.3 of \cite{Al-Kal} (see also Theorem 1.3.2 of \cite{Al-Kal}), 
there are a basic subsequence $\{x_{n_k}-x\}_{k\in\w}\subseteq \ell_{p}$ of $\{x_n-x\}_{n\in\w}$ and a linear topological isomorphism $R:\overline{\{x_{n_k}-x\}_{k\in\w}}^{\,\ell_p}\to \ell_{p}$ such that
\[
R(x_{n_k}-x)= a_k e_k \quad \mbox{ for every $k\in\w$, where }\; a_k :=b_{n_k}=\|x_{n_k}-x\|_{\ell_{p}},
\]
and such that the subspace $\overline{\{x_{n_k}-x\}_{k\in\w}}^{\,\ell_p}$ is complemented in $\ell_{p}$.

By (\ref{equ:compact-ell-p-p}),  the map $Q:\ell_{p}\to\ell_{p}$ defined by $Q(\xi_k):= (a_k \xi_k)$, $(\xi_k)\in\ell_p$, is a topological linear isomorphism. Hence, replacing $R$ by $Q^{-1}\circ R$ and $K$ by $2K$ (so that, by the absolute convexity of $K$, $x_n-x\in 2K$ for every $n\in\w$), without loss of generality we can assume that $x=0$ and $a_k=1$ for every $k\in\w$. Therefore we obtained that there are a weakly null sequence $\{y_k=x_{n_k}\}_{k\in\w}$ in $K$ and a topological linear isomorphism $R:\overline{\{y_k\}_{k\in\w}}^{\,\ell_p}\to \ell_{p}$ such that
\begin{equation} \label{equ:Lp-weak-Glicl-1}
R(y_k)= e_k \quad \mbox{ for every $k\in\w$},
\end{equation}
and such that the subspace $H:=\overline{\{y_k\}_{k\in\w}}^{\,\ell_p}$ is complemented in $\ell_{p}$. Let $L$ be a topological complement of $H$, so $\ell_p=H\oplus L$.

Set $C:=\cacx\big(\{y_k\}_{k\in\w}\big)$. Then, by the absolute convexity of $K$,  $C\subseteq K$ is a weakly compact and absolutely convex subset of $\ell_p^0$, and hence $C$ is a weakly compact absolutely convex subset of $\ell_p$.
Let $i_p:\ell_1\to\ell_p$ be the natural inclusion. Then (\ref{equ:Lp-weak-Glicl-1}) and the weak compactness and absolute convexity of $C$ imply $R(C)=i_p(B_{\ell_1})\subseteq B_{\ell_p}$. Therefore (recall that $C\subseteq K \subseteq \ell_p^0 =\bigcup_{n\in\w}\IF^n$)
\[
B_{\ell_1}=\bigcup_{n\in\w} i_p^{-1}\big( R(C\cap \IF^n)\big).
\]
Taking into account the Baire property of $B_{\ell_1}$, it follows that $i_p^{-1}\big( R(C\cap \IF^n)\big)$ contains an open subset of $B_{\ell_1}$ for some $n\in\w$. But this is impossible because the injectivity of $i_p$ implies that $i_p^{-1}\big( R(C\cap \IF^n)\big)\subseteq i_p^{-1}\big( R(H\cap \IF^n)\big)$ is finite-dimensional.
This contradiction shows that  $K$ is a compact subset of $\ell_p^0$. Thus $\ell_p^0$ has the weak Glicksberg property.

Since $\ell_p^0$ has the weak Glicksberg property, it has the $DP$ property by (i) of Corollary \ref{c:weak-Glick-DP}.\qed
\end{proof}

\begin{remark} \label{rem:DP-hereditary-1} {\em
(i) For every $q>1$, (ii) of Proposition \ref{p:Lp-Schur}  and (iv) of Proposition \ref{p:Lp-DP}   show that normed spaces with the weak Glicksberg property may not have the $q$-Schur property.

(ii) Proposition \ref{p:Lp-DP} also shows that for all $1\leq p<q\leq\infty$ and each $p',q'\in[1,\infty]$, there are normed spaces which have the $p$-Schur property, the strict $DP_p$ property and the sequential  $DP_{(p,p')}$ property, but which have neither the $q$-Schur property, the strict $DP_q$ property nor the sequential  $DP_{(q,q')}$ property.

(iii) Since $\ell_p^0$ is a large subspace of $\ell_p$ with the $DP$ property and $\ell_p$ does not have the $DP$ property, it follows that the converse to Proposition \ref{p:DP-hereditary-large} is not true in general.\qed}
\end{remark}

The next proposition stands some relationships between the introduced Dunford--Pettis type properties for the important case of locally complete locally convex spaces.
\begin{proposition} \label{p:DP=>seqDP}
Let $p,q\in[1,\infty]$, and let $(E,\tau)$ be a locally complete space.
\begin{enumerate}
\item[{\rm(i)}] If $E$ has the $DP$ property, then $E$ has the strict $DP_p$ property.
\item[{\rm(ii)}] If additionally $E$ is weakly sequentially $p$-angelic, then $E$ has the $DP$ property if and only if $E$ has the strict $DP_p$ property.
\item[{\rm(iii)}]  Assume that $E$ is a $q$-quasibarrelled  space such that $E'_\beta$ is locally complete. If $E$ has the $DP$ property, then it has  the sequential $DP_{(p,q)}$ property.
\item[{\rm(iv)}]  Assume additionally that $E$ is weakly sequentially $p$-angelic and $E'_\beta$ is a weakly sequentially $q$-angelic space. If  $E$ has  the sequential $DP_{(p,q)}$ property, then $E$ has the $DP$ property and  the strict $DP_p$ property.
\end{enumerate}
\end{proposition}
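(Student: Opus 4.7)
For clause (i), the plan is to exploit local completeness to convert a weakly $p$-summable sequence into an absolutely convex weakly compact set. Given an operator $T\colon E\to L$ into a Banach space that transforms bounded sets into relatively weakly compact sets, and a weakly $p$-summable sequence $\{x_n\}$ in $E$, Theorem~\ref{t:weakly-p-lc} says $K:=\cacx(\{x_n\})$ is weakly compact and absolutely convex. The $DP$ property then forces $T(K)$ to be relatively compact in $L$, so $\{T(x_n)\}\subseteq T(K)$ is precompact; since $\{T(x_n)\}$ is also weakly null, Lemma~\ref{l:null-seq} yields $T(x_n)\to 0$, so $T$ is $p$-convergent.

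For clause (ii), only the reverse direction is new. Take $A\in\Sigma(E)$ and $T$ as in Definition~\ref{def:DP}. Weak sequential $p$-angelicity turns weak compactness of $A$ into relative weak sequential $p$-compactness, hence weak sequential $p$-precompactness. The characterization (ii') in Theorem~\ref{t:strict-DPp} then gives that $T(A)$ is precompact in $L$. Since $A$ is bounded, $T(A)$ is relatively weakly compact; but in a Banach space precompactness plus closedness in completion yields relative compactness, so $T(A)$ is relatively compact.

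For clause (iii), the plan is to work in the Grothendieck topology. Given a weakly $p$-summable $\{x_n\}\subseteq E$ and a weakly $q$-summable $\{\chi_n\}\subseteq E'_\beta$, local completeness of $E'_\beta$ and Theorem~\ref{t:weakly-p-lc} make $B:=\cacx(\{\chi_n\})$ weakly compact in $E'_\beta$; $q$-quasibarrelledness makes $\{\chi_n\}$ equicontinuous, hence $B\subseteq U^\circ$ for some $U\in\Nn_0(E)$, so $B\in\Sigma'(E')$. The seminorm $p_B(x):=\sup_{\chi\in B}|\langle\chi,x\rangle|$ is therefore $\tau_{\Sigma'}$-continuous. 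Now $K:=\cacx(\{x_n\})$ is absolutely convex and weakly compact in $E$ by Theorem~\ref{t:weakly-p-lc}, and the $DP$ property together with Theorem~\ref{t:def-DP}(iii) says $K$ is $\tau_{\Sigma'}$-precompact. Since $x_n\to 0$ weakly, Lemma~\ref{l:null-seq} applied to the pair $(\sigma(E,E'),\tau_{\Sigma'})$ yields $x_n\to 0$ in $\tau_{\Sigma'}$, so $p_B(x_n)\to 0$, and in particular $|\langle\chi_n,x_n\rangle|\to 0$.

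For clause (iv), by clause (ii) it suffices to derive the strict $DP_p$ property, which by Theorem~\ref{t:strict-DPp}(vi) amounts to the $p$-Schur property of $(E,\tau_{\Sigma'})$. Let $\{x_n\}$ be weakly $p$-summable in $E$; we must show $\sup_{\chi\in K}|\langle\chi,x_n\rangle|\to 0$ for every $K\in\Sigma'(E')$. Assuming otherwise, pass to a subsequence and choose $\chi_n\in K$ with $|\langle\chi_n,x_n\rangle|\geq\varepsilon$. Here is the key step: weak sequential $q$-angelicity of $E'_\beta$ applied to the weakly compact set $K$ produces a subsequence $\{\chi_{n_k}\}$ weakly $q$-converging in $E'_\beta$ to some $\chi\in E'$, so $\{\chi_{n_k}-\chi\}$ is weakly $q$-summable. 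The sequential $DP_{(p,q)}$ property then gives $\langle\chi_{n_k}-\chi,x_{n_k}\rangle\to 0$, while weak nullity of $\{x_n\}$ gives $\langle\chi,x_{n_k}\rangle\to 0$, contradicting $|\langle\chi_{n_k},x_{n_k}\rangle|\geq\varepsilon$. The main obstacle throughout is keeping track of which compatible topology—$\sigma(E,E')$, $\tau_{\Sigma'}$, or the norm of $L$—each convergence lives in, and invoking Lemma~\ref{l:null-seq} at the right moment to upgrade weak nullity plus precompactness to genuine convergence.
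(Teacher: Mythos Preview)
Your proof is correct and follows essentially the same strategy as the paper's. The only differences are presentational: in clauses (i) and (iii) you verify the operator/sequence conditions directly, whereas the paper passes through the equivalent topological characterizations (the $p$-Schur property of $(E,\tau_{\Sigma'})$ via Theorem~\ref{t:strict-DPp}(vi), and then the inclusion $\tau_{S_q}\subseteq\tau_{\Sigma'}$ from Lemma~\ref{l:Sigma-seq}(v) together with Theorem~\ref{t:seq-DPpq}(ix)); in clause (ii) you invoke Theorem~\ref{t:strict-DPp}(ii') on weakly sequentially $p$-precompact sets, while the paper argues by hand that every sequence in $T(K)$ has a convergent subsequence with limit in $T(K)$. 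Clause (iv) is identical to the paper's argument. All four clauses rest on the same two moves: local completeness upgrades a weakly $p$- (or $q$-) summable sequence to an absolutely convex weakly compact hull, and Lemma~\ref{l:null-seq} upgrades weak nullity plus precompactness to genuine convergence.
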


\begin{proof}
(i) By Theorem \ref{t:strict-DPp}, to show that the space $E$ has  the strict $DP_p$ property it suffices to prove that the space $(E,\tau_{\Sigma'})$ has the $p$-Schur property. Let $S=\{x_n\}_{n\in\w}$ be a weakly $p$-summable sequence in $(E,\tau_{\Sigma'})$. Since, by Lemma  \ref{l:Sigma'}, the topology  $\tau_{\Sigma'}$ is compatible with the original topology $\tau$, the sequence $S$ is weakly $p$-summable in $E$. As $E$ is locally complete, the set $K:=\cacx\big(S\big)$ is a weakly compact subset of $E$. Therefore, the $DP$ property of $E$ and  (vii) of Theorem \ref{t:def-DP} imply that $K$ is  compact in the topology $\tau_{\Sigma'}$.
Since $S$ is weakly compact and $\sigma(E,E')\subseteq\tau_{\Sigma'}$, $S$ is a  $\tau_{\Sigma'}$-closed subset of $K$ and hence $S$ is $\tau_{\Sigma'}$-compact. Therefore, by Lemma \ref{l:null-seq}, $S$
is a  $\tau_{\Sigma'}$-null sequence. Thus  the space $(E,\tau_{\Sigma'})$ has the $p$-Schur property.
\smallskip

(ii) Taking into account (i) we have to show only that if $E$ has the strict $DP_p$ property, then $E$ has the $DP$ property. Let $K$ be an absolutely convex weakly compact subset of $E$. Fix an operator $T$ from $E$ into a Banach space $L$ which transforms bounded sets into relatively weakly compact sets. By (v) of Theorem \ref{t:def-DP},  it suffices to show that $T(K)$ is a compact subset of $L$.  Since $L$ is a Banach space, it suffices to prove that every sequence $\{T(x_n)\}_{n\in\w}$ in $T(K)$ has a subsequence converging to a point of $T(K)$. As $E$ is weakly sequentially $p$-angelic, $K$ is a weakly sequentially $p$-compact subset of $E$, and hence the sequence $\{x_n\}_{n\in\w}$ has a subsequence $\{x_{n_k}\}_{k\in\w}$ which weakly $p$-converges to some point $x\in K$. Since $E$ has the strict $DP_p$ property, $T$ is $p$-convergent and hence $T(x_{n_k})\to T(x)\in T(K)$. Thus $T(K)$ is a compact subset of $L$, and hence $E$ has the $DP$ property.
\smallskip

(iii) By Theorem \ref{t:seq-DPpq}, to show that the space $E$ has  the  sequential $DP_{(p,q)}$ property  it suffices to prove that (1) the topology $\tau_{S_q}$ of uniform convergence on weakly $q$-summable sequences of $E'_\beta$is compatible with the original topology $\tau$, and (2)  the space $(E, \tau_{S_q})$ has the $p$-Schur property.
By (v) of Lemma \ref{l:Sigma-seq}, we have $\sigma(E,E')\subseteq \tau_{S_q} \subseteq \tau_{\Sigma'}$. Now (i) of Lemma  \ref{l:Sigma'} implies that $\tau_{S_q}$ is compatible with $\tau$ and hence (1) is satisfied. As we showed in (i), the space $(E,\tau_{\Sigma'})$  has  the $p$-Schur property. Since $\sigma(E,E')\subseteq \tau_{S_q} \subseteq \tau_{\Sigma'}$, it follows that $(E, \tau_{S_q})$ has  the $p$-Schur property,  so (2) holds true.
\smallskip

(iv) Taking into account (ii) it suffices to show that $E$ has  the strict $DP_p$ property. To this end, by Theorem \ref{t:strict-DPp}, it suffices to prove that the space $(E,\tau_{\Sigma'})$ has the $p$-Schur property.

Suppose for a contradiction that $(E,\tau_{\Sigma'})$ does not have the $p$-Schur property. Since $\tau_{\Sigma'}$ is compatible with $\tau$, there exists a weakly $p$-summable sequence $S=\{x_n\}_{n\in\w}$ in $E$ which does not converge to $0$ in $\tau_{\Sigma'}$. Therefore there are $\e>0$, a strictly increasing  sequence $\{n_k\}_{k\in\w}$ in $\w$, and a sequence $\{\chi_k\}_{k\in\w}$  in some $K\in \Sigma'(E')$ such that
\begin{equation} \label{equ:DP-seqDP-1}
\big|\langle \chi_k, x_{n_k}\rangle\big|\geq \e \quad \mbox{ for every $k\in\w$}.
\end{equation}
By the definition of $\Sigma'(E')$, the set  $K$ is weakly compact in $E'_\beta$. Therefore, by the  weak sequential $q$-angelicity of $E'_\beta$, there is a subsequence $\{\chi_{k_j}\}_{j\in\w}$ of $\{\chi_k\}_{k\in\w}$ which weakly $q$-converges to some $\chi\in K$. Since $S$ is weakly $p$-summable so is its subsequence $\{ x_{{n_{k_j}}}\}_{j\in\w}$. Then (\ref{equ:DP-seqDP-1}) and the sequential $DP_{(p,q)}$ property of $E$ imply
\[
\e\leq \big|\langle \chi_{k_j}, x_{{n_{k_j}}}\rangle\big|\leq  \big|\langle \chi_{k_j}-\chi, x_{{n_{k_j}}}\rangle\big| + \big|\langle \chi, x_{{n_{k_j}}}\rangle\big|\to 0 \quad \mbox{ as $j\to\infty$}.
\]
This contradiction shows that $(E,\tau_{\Sigma'})$ has the $p$-Schur property. \qed
\end{proof}
Proposition 3.3(i) of \cite{ABR} states that if $E$ is a barrelled quasi-complete lcs with the $DP$ property, then $E$ has the sequential $DP$ property. The next corollary essentially generalizes this result.

\begin{corollary} \label{c:DP=>sDPp}
Let $E$ be a quasibarrelled locally complete space. If $E$ has the $DP$ property, then $E$ has  the strict $DP_p$ property and the sequential $DP_{(p,q)}$ property for all  $p,q\in[1,\infty]$.
\end{corollary}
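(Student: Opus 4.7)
The plan is to obtain both conclusions as direct consequences of Proposition \ref{p:DP=>seqDP}, whose clauses (i) and (iii) were stated precisely with this kind of corollary in mind. The only work is to verify that the hypotheses ``quasibarrelled + locally complete'' are strong enough to feed those two clauses.

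First I would handle the strict $DP_p$ property. Since $E$ is locally complete and has the $DP$ property, clause (i) of Proposition \ref{p:DP=>seqDP} applies verbatim and delivers the strict $DP_p$ property for every $p\in[1,\infty]$.

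For the sequential $DP_{(p,q)}$ property the relevant statement is clause (iii) of Proposition \ref{p:DP=>seqDP}, which needs three hypotheses: $E$ is locally complete (given), $E$ has the $DP$ property (given), $E$ is $q$-quasibarrelled, and $E'_\beta$ is locally complete. Both of the latter come for free from quasibarrelledness: every quasibarrelled space is $q$-quasibarrelled for each $q\in[1,\infty]$ (any weakly $q$-summable sequence in $E'_\beta$ is strongly bounded, hence equicontinuous), and by Proposition 11.2.3 of \cite{Jar} the strong dual of a quasibarrelled space is quasi-complete, in particular locally complete. So clause (iii) applies and yields the sequential $DP_{(p,q)}$ property for all $p,q\in[1,\infty]$.

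There is essentially no obstacle: the work was already done inside Proposition \ref{p:DP=>seqDP}, where the Grothendieck topology $\tau_{\Sigma'}$ and its sequential variant $\tau_{S_q}$ were identified as the common framework for transferring the $DP$ property to the $p$-Schur property of $(E,\tau_{\Sigma'})$ (resp.\ $(E,\tau_{S_q})$), using local completeness to ensure that closed absolutely convex hulls of weakly $p$-summable sequences are weakly compact (Theorem \ref{t:weakly-p-lc}). The corollary just packages those hypotheses into the single convenient assumption of quasibarrelledness plus local completeness, recovering and strengthening Proposition 3.3(i) of \cite{ABR}.
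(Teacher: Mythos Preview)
Your proof is correct and follows essentially the same route as the paper: verify that quasibarrelledness yields $q$-quasibarrelledness for every $q$ and that $E'_\beta$ is locally complete, then invoke clauses (i) and (iii) of Proposition~\ref{p:DP=>seqDP}. The only cosmetic difference is that the paper cites Theorem~12.1.4 of \cite{Jar} for the local completeness of $E'_\beta$, whereas you cite Proposition~11.2.3 of \cite{Jar} (quasi-completeness, hence local completeness); both references do the job, and in fact the paper itself uses 11.2.3 elsewhere for the same purpose.
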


\begin{proof}
It is clear that $E$ is $q$-quasibarrelled for all $q\in[1,\infty]$. By Theorem 12.1.4 of \cite{Jar}, the space $E'_\beta$ is locally complete. Now (i) and (iii) of Proposition \ref{p:DP=>seqDP} apply.\qed
\end{proof}

\begin{corollary} \label{c:DP<=>sDPp}
Let $p,q\in[1,\infty]$, and let $E$ be a quasibarrelled, locally complete space such that $E$ is weakly sequentially $p$-angelic and $E'_\beta$ is a weakly sequentially $q$-angelic space. Then the following assertions are equivalent:
\begin{enumerate}
\item[{\rm(i)}] $E$ has the $DP$ property;
\item[{\rm(ii)}]  $E$ has  the strict $DP_p$ property;
\item[{\rm(iii)}] $E$ has the sequential $DP_{(p,q)}$ property.
\end{enumerate}
\end{corollary}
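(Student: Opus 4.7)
The plan is to close the equivalence (i)$\Leftrightarrow$(ii)$\Leftrightarrow$(iii) by invoking only results that appear earlier in the paper; no new argument is needed, just careful matching of hypotheses. I would organize the proof as two short implication cycles: (i)$\Rightarrow$(ii), (i)$\Rightarrow$(iii), (ii)$\Rightarrow$(i), and (iii)$\Rightarrow$(i).

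First, the two implications from (i) are handled simultaneously by Corollary \ref{c:DP=>sDPp}. Its hypothesis is exactly that $E$ is quasibarrelled and locally complete, which we are assuming, so if $E$ has the $DP$ property then $E$ automatically has the strict $DP_p$ property and the sequential $DP_{(p,q)}$ property for \emph{all} $p,q\in[1,\infty]$. The weak sequential $p$- and $q$-angelicity of $E$ and $E'_\beta$ are not needed for this direction.

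Next, (ii)$\Rightarrow$(i) is precisely the content of Proposition \ref{p:DP=>seqDP}(ii): a locally complete, weakly sequentially $p$-angelic space has the $DP$ property if and only if it has the strict $DP_p$ property. Both required hypotheses are part of our standing assumptions, so the implication is immediate. Similarly, (iii)$\Rightarrow$(i) is obtained by applying Proposition \ref{p:DP=>seqDP}(iv), whose hypotheses are exactly local completeness of $E$, weak sequential $p$-angelicity of $E$, and weak sequential $q$-angelicity of $E'_\beta$; under these, the sequential $DP_{(p,q)}$ property already implies both the $DP$ property and the strict $DP_p$ property.

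Since the proof is essentially a bookkeeping assembly of the three cited results, there is no real obstacle; the only care needed is to observe that the hypotheses of Corollary \ref{c:DP=>sDPp} (quasibarrelled plus locally complete) and of Proposition \ref{p:DP=>seqDP}(ii),(iv) (local completeness plus the two angelicity conditions) are \emph{jointly} part of our standing assumption, so that each cited implication applies verbatim without any additional verification.
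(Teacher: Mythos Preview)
Your proof is correct and follows essentially the same approach as the paper: the paper derives (i)$\Leftrightarrow$(ii) from Proposition~\ref{p:DP=>seqDP}(ii), (i)$\Rightarrow$(iii) from Corollary~\ref{c:DP=>sDPp}, and (iii)$\Rightarrow$(i) from Proposition~\ref{p:DP=>seqDP}(iv). The only cosmetic difference is that you obtain (i)$\Rightarrow$(ii) from Corollary~\ref{c:DP=>sDPp} rather than from Proposition~\ref{p:DP=>seqDP}(ii), which is equally valid.
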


\begin{proof}
The equivalence (i)$\Leftrightarrow$(ii) follows from (ii) of Proposition \ref{p:DP=>seqDP}. The implication (i)$\Ra$(iii) follows from Corollary \ref{c:DP=>sDPp}, and (iii)$\Ra$(i) follows from (iv) of Proposition \ref{p:DP=>seqDP}.\qed
\end{proof}

If $p=q=\infty$, the Eberlein--\v{S}mulian theorem and Corollary \ref{c:DP<=>sDPp} immediately imply the following classical Grothendieck result.
\begin{corollary} \label{c:DP=sDP}
A Banach space $E$ has the $DP$ property if and only if it has the sequential $DP$ property.
\end{corollary}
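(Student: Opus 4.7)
The plan is to verify that a Banach space $E$ satisfies all the hypotheses of Corollary~\ref{c:DP<=>sDPp} taken with $p=q=\infty$, and then read off the desired equivalence between (i) and (iii) there. Recall that, by Definition~\ref{def:DP-general}, the sequential $DP_{(\infty,\infty)}$ property is exactly the sequential $DP_\infty$ property, which in turn coincides with the sequential $DP$ property.

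First I would note that every Banach space $E$ is barrelled, hence \emph{a fortiori} quasibarrelled, and is complete, hence locally complete. Thus the structural hypotheses of Corollary~\ref{c:DP<=>sDPp} are met.

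Next I would verify the two angelicity assumptions. By the classical Eberlein--\v{S}mulian theorem, a subset $A$ of a Banach space is relatively weakly compact if and only if it is relatively weakly sequentially compact. A relatively weakly sequentially $\infty$-compact set is, by definition, precisely a relatively weakly sequentially compact set (since weakly $\infty$-summable means weakly null). Consequently, $E$ is weakly sequentially $\infty$-angelic. Since the strong dual $E'_\beta$ of a Banach space is again a Banach space, the same argument yields that $E'_\beta$ is weakly sequentially $\infty$-angelic.

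With all hypotheses of Corollary~\ref{c:DP<=>sDPp} verified for $p=q=\infty$, the equivalence (i)$\Leftrightarrow$(iii) in that corollary gives that $E$ has the $DP$ property if and only if $E$ has the sequential $DP_{(\infty,\infty)}$ property, i.e., the sequential $DP$ property. There is no real obstacle; the entire content is packaged into the already-proved Corollary~\ref{c:DP<=>sDPp}, and the only thing to check explicitly is that the Eberlein--\v{S}mulian theorem supplies the weak sequential $\infty$-angelicity both for $E$ and for $E'_\beta$.
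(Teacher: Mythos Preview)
Your proof is correct and follows essentially the same route as the paper: the paper also derives the corollary directly from Corollary~\ref{c:DP<=>sDPp} with $p=q=\infty$, invoking the Eberlein--\v{S}mulian theorem to obtain the needed weak sequential $\infty$-angelicity for the Banach space $E$ and its Banach dual $E'_\beta$. Your write-up simply makes explicit the checks (quasibarrelledness, local completeness, and that $E'_\beta$ is again Banach) that the paper leaves implicit.
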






\bibliographystyle{amsplain}

\end{document}